\numberwithin{equation}{section}
\theoremstyle{plain}
\newtheorem{thm}{Theorem}[section]
\newtheorem{lemma}[thm]{Lemma}
\newtheorem{prop}[thm]{Proposition}
\newtheorem{coroll}[thm]{Corollary}
\newtheorem{claim}[thm]{Claim}
\newtheorem{conj}[thm]{Conjecture}
\theoremstyle{definition}
\newtheorem{definition}[thm]{Definition}
\theoremstyle{remark}
\newtheorem{remark}[thm]{Remark}
\newtheorem{ex}[thm]{Example}
\newcommand{\R}{\mathbf{R}}
\newcommand{\Z}{\mathbf{Z}}
\newcommand{\HH}{\mathcal H}
\DeclareMathOperator{\bip}{Bip}
\DeclareMathOperator{\conv}{Conv}
\title{Hypergraph polynomials and the Bernardi process}
\author{Tam\'as K\'alm\'an}
\address{Department of Mathematics\\
	Tokyo Institute of Technology\\
	H-214, 2-12-1 Ookayama, Meguro-ku, Tokyo 152-8551, Japan}
\email{kalman@math.titech.ac.jp}
\thanks{TK was supported by consecutive Japan Society for the Promotion of Science (JSPS) Grant-in-Aids, for Young Scientists B (no.\ 25800037) and for Scientific Research C (no.\ 17K05244).}
\author{Lilla T\'othm\'er\'esz}
\address{Cornell University,
	Ithaca, New York 14853-4201, USA, 
	MTA-ELTE Egerv\'ary Research Group, P\'azm\'any P\'eter s\'et\'any 1/C, Budapest, Hungary}
\email{tmlilla@cs.elte.hu}
\thanks{LT was supported by the National Research, Development and Innovation Office -- NKFIH, grant no.\ 109240, and by the NSF grant DMS-14552.}
\keywords{hypergraph, bipartite graph, ribbon structure, Tutte polynomial, interior polynomial, embedding activity, root polytope, dissection, shelling order, $h$-vector}
\date{\today}
\subjclass[2010]{05C10, 05C31, 05C50, 05C57, 05C65}
\begin{document}  

\begin{abstract}
Bernardi gave a formula for the Tutte polynomial $T(x,y)$ of a graph, based on spanning trees and activities just like the original definition, but using a fixed ribbon structure to order the set of edges in a different way for each tree. The interior polynomial $I$ is a generalization of $T(x,1)$ to hypergraphs. We supply a Bernardi-type description of $I$ using a ribbon structure on the underlying bipartite graph $G$. Our formula works because it is determined by the Ehrhart polynomial of the root polytope of $G$ in the same way as $I$ is. To prove this we interpret the Bernardi process as a way of dissecting the root polytope into simplices, along with a shelling order. We also show that our generalized Bernardi process gives a common extension of bijections (and their inverses), constructed by Bernardi and further studied by Baker and Wang, between spanning trees and break divisors.
\end{abstract}

\maketitle

\section{Introduction}

A few years ago a pair of polynomial invariants of hypergraphs was introduced \cite{hiperTutte}, which generalize the valuations $T(x,1)$ and $T(1,y)$ of the two-variable graph invariant $T(x,y)$ due to Tutte \cite{Tutte}. They are called the interior and exterior polynomials because they are generating functions of `interior and exterior activity.' In the case of graphs, activities were associated to spanning trees by Tutte himself in his original definiton of $T$. In the hypergraph case, instead of spanning trees one considers `hypertrees' and their activities, in a spirit very close to \cite{Tutte}. Hypertrees were introduced in \cite{alex} (and so named in \cite{hiperTutte}). They generalize characteristic vectors of spanning trees of a graph, preserving some nice polyhedral properties.

Both for graphs and hypergraphs, the computation of individual activities requires fixing an order of the set of edges or hyperedges, respectively, albeit temporarily, because the aggregate polynomials do not depend on it. See Definition \ref{def:activity}. In his remarkable paper \cite{Bernardi_Tutte}, O. Bernardi removed the fixed order from the definition (in the case of graphs) and replaced it with another kind of auxiliary data: a ribbon structure and a base point. Loosely speaking, for a given spanning tree, he traced the boundary of the neighborhood of the tree and numbered the edges of the graph along the way. He used this order to compute the internal and external activities of the tree. He repeated this for all spanning trees and organized the information in a two-variable generating function which happens to satisfy the same deletion-contraction formulas as the Tutte polynomial --- hence the two agree, regardless of what ribbon structure we use. 

In this paper we extend Bernardi's work to the case of the interior polynomial of a hypergraph. A similar formula is conjectured for the exterior polynomial.

Any hypergraph $\HH = (V,E)$ naturally yields a bipartite graph $\bip\HH$ in which one color class corresponds to the vertices of the hypergraph, the other color class to the hyperedges, and edges correspond to containment. We assume $\bip \HH$ to be connected and endow it with a ribbon structure and a base point. (The base point can be thought of as a boundary point of the thickened graph.)

A hypertree is essentially a `possible degree distribution vector' of a spanning tree of $\bip\HH$ taken at the elements of $E$, cf.\ Definition \ref{def:hypertree}. We note that hypertrees of ordinary graphs are exactly the characteristic functions of their spanning trees.

Our first order of business is to define what it means to `trace the boundary of the neighborhood of a hypertree,' which turns out to be a process constructing a certain spanning tree that realizes the hypertree. In fact we define two versions of such a `Bernardi process.' Contrary to the case of graphs, the fact that the Bernardi process results in a tree is not trivial at all. As a byproduct, we also obtain an order on the set of edges of $\bip\HH$ which we then use to order $E$ as well. Now it makes sense to take the interior and exterior activities, just like in \cite{hiperTutte}, of the hypertree with respect to this order. After repeating the procedure for all hypertrees, we write two one-variable generating functions $\tilde I$ and $\tilde X$ for the two `embedding activities.' (As to why not a single, two-variable function, see \cite[Remark 5.7]{hiperTutte}, cf.\ \cite{Cameron-Fink}. There is in fact some new development on this issue \cite{universal}, which we hope to incorporate in future work.)

The main result of the paper is that {\bf the generating function $\tilde I$ of internal embedding activities coincides with the interior polynomial}. The interior and exterior polynomials of a hypergraph, $I$ and $X$, look similar to each other but their behavior is rather different. For instance, the former is invariant under taking the transpose of the hypergraph but the latter is not. (Here the transpose of the hypergraph $\HH=(V,E)$ is the hypergraph $\overline\HH=(E,V)$ with the roles of vertices and hyperedges interchanged.) In other words, $I$ is an invariant of the bipartite graph $\bip \HH$. This fact is proven in \cite{KP_Ehrhart} by noting that (essentially) the same polynomial may be obtained as the Ehrhart polynomial of the so called root polytope of $\bip \HH$. This depends, among other things, on the basic observation that spanning trees of a bipartite graph correspond to maximal simplices in its root polytope. We exploit the same connection to prove our main theorem. Since we do not have an analogous description for the exterior polynomial, the exterior version of our result remains, for the time being, a conjecture.

The notion of root polytope (Definition \ref{def:rootpolytope}) is due to Hibi and Ohsugi \cite{hibi}. Postnikov \cite{alex} discovered it independently and studied its triangulations to great effect. A consequence of our proof is an unexpected link between Bernardi's and Postnikov's work: {\bf when the Bernardi trees for all hypertrees are translated to simplices in the root polytope, they form a dissection}. (I.e., the simplices fill the polytope and their interiors are mutually disjoint. They typically do not form a triangulation though, cf.\ Examples \ref{ex:non-triangulation} and \ref{ex:otszog} --- that is, some pairs of simplices may not intersect in a common face.) In Section \ref{sec:examples} we will see that this generalizes the well-known triangulation of the product of two simplices by non-crossing trees.

We get an alternative description of the dissection by reinterpreting Bernardi trees as `Jaeger trees,' in honor of F. Jaeger's beautiful paper \cite{Jaeger} in which they appear as the main terms in a certain expansion of the Homfly polynomial. (The overlap between Jaeger's cases and ours is when $\bip\HH$ is embedded in the plane so that the so called median construction can be performed, resulting in a (typically non-planar) ribbon structure for the graph, as well as in an alternating link. See Figures \ref {fig:apapirrasokatir} and \ref{fig:knot}.) This description does not refer to hypertrees, instead Jaeger trees are defined by a simple local rule that is obeyed when we trace the boundaries of their neighborhoods. This also leads to the definition of a natural order among Bernardi/Jaeger trees and we prove, 
as a key step to our main theorem, that {\bf this order is a shelling order of the dissection}. 

If $\mathbf f\colon E\to\Z_{\ge0}$ is a hypertree in $\HH$, we may view the `other' degree vector of its Bernardi tree as a hypertree $\bar{\mathbf f}\colon V\to\Z_{\ge0}$ in $\overline\HH$. The dissection property, just like in \cite{alex}, implies that this is a bijection between the two sets of hypertrees. In the case of graphs, hypertrees on the vertex set are easily seen to be the duals of the so called break divisors. In this special case, the above bijection-by-dissection agrees with the \emph{bijection between spanning trees and break divisors}, defined by Bernardi \cite{Bernardi_Tutte} and studied further by Baker and Wang \cite{Baker-Wang}. In \cite{Baker-Wang} the inverse of Bernardi's bijection is described in a way that is formally different from the original Bernardi algorithm. In hypergraph language (where the transpose is an obvious involution and transpose hypergraphs share the same bipartite graph and root polytope), {\bf the bijection and its inverse are revealed to be of the same nature, defined by the same dissection}.

Finally, we note that our family of dissections of the root polytope (depending on ribbon structure and base point) contains several previously known triangulations. Namely, in addition to the triangulation by non-crossing trees \cite{GGP} (which applies to the root polytope of a complete bipartite graph), the triangulation by duals of arborescences \cite{KM} (which works in the case of a plane bipartite graph) is also a special case.

We should warn the reader that there are many orders in this paper, of edges, nodes, hyperedges etc.\ induced by spanning trees, hypertrees etc. This can be cumbersome but we need each for its own technical reason. All of these orders, however, are defined by the same simple principle: some process propagates through the graph and objects are listed in the order in which they are first reached. 

The paper is organized as follows. 
In Section \ref{sec:prelim} we set our definitions and summarize some of the necessary background, including Bernardi's embedding activities. 
Section \ref{sec:root_polytope} surveys Postnikov's work and describes the link from $h$-vectors of shellable dissections of the root polytope of $\bip \HH$ to the interior polynomial of $\HH$. 
We define the Bernardi process for hypergraphs in Section \ref{sec:Bernardi-process}, and establish its well-definedness. 
In Section \ref{sec:statements} we give the Bernardi-type description of the interior polynomial and state the equivalence of the two definitions (Theorem \ref{thm:Bernardi-interior_well_def}, our main result), as well as several conjectures. 
In Section \ref{sec:Jaeger_trees} we define Jaeger trees, prove their basic properties and show that the set of spanning trees arising as outcomes of the Bernardi process is exactly the set of Jaeger trees. We also discuss the connection of our work to that of Baker and Wang \cite{Baker-Wang}. 
In Section \ref{sec:shelling} we show that Jaeger trees induce a shellable dissection of the root polytope of $\bip \HH$ with a natural shelling order, and we relate the resulting $h$-vector to Bernardi-type activities. This allows us to prove Theorem \ref{thm:Bernardi-interior_well_def}.
In Section \ref{sec:examples} we show how certain previously known triangulations arise from the Bernardi process.
Finally, in an appendix we observe that for graphs (i.e., hypergraphs $\HH$ where the cardinality of each hyperedge is two), the Bernardi process behaves in a special way in that it induces an activity-preserving bijection between the hypertrees of $\HH$ and $\overline{\HH}$. 

{\bf Acknowledgment.} This version of our paper is identical to the one published in Algebraic Combinatorics, vol.\ 3 (2020), no.\ 5, pp.\ 1099--1139. We thank the anonymous referee for an exceptionally close reading of the manuscript and a number of suggestions for improvement.

\section{Preliminaries} \label{sec:prelim}

\subsection{Basics}

A \emph{hypergraph} is an ordered pair $\HH=(V,E)$, where $V$ is a finite set and $E$ is a finite multiset of subsets of $V$. We refer to elements of $V$ as \emph{vertices} and to elements of $E$ as \emph{hyperedges}. For a hypergraph $\mathcal{H}=(V,E)$, let the \emph{underlying bipartite graph} $\bip\HH$ be the bipartite graph with vertex classes $V$ and $E$, where $v\in V$ is connected to $e\in E$ if $v\in e$ in $\HH$. In the context of bipartite graphs such as $\bip\HH$, instead of vertices, we will call the elements of $V\cup E$ \emph{nodes}. Specifically, the elements of $V$ and $E$ will be called \emph{violet} and \emph{emerald} nodes, respectively. (In our figures, violet appears as blue and emerald appears as green.) Throughout the paper, we assume that $\HH$ is \emph{connected}, which means that $\bip \HH$ is connected.

We also assume that $\bip\HH$ has a ribbon graph structure. Here for a graph $G$ without loop edges, a \emph{ribbon structure} is a family of cyclic permutations: namely for each vertex $x$ of $G$, a cyclic permutation of the edges incident to $x$ is given. For an edge $xy$ of $G$, we use the following notations:
\begin{itemize}
\item $yx^+_G$: the edge following $yx$ at $y$
\item $yx^-_G$: the edge preceding $yx$ at $y$
\item $xy^+_G$: the edge following $xy$ at $x$
\item $xy^-_G$: the edge preceding $xy$ at $x$.
\end{itemize}
If the graph $G$ is clear from the context, we omit the subscript. We will sometimes need the operation of removing an edge from a ribbon graph. If $G$ is a ribbon graph, and $\varepsilon$ is an edge of $G$, then $G-\varepsilon$ is the ribbon graph with
\[
xy^+_{G-\varepsilon}= \left\{\begin{array}{cl} xy_G^+ & \text{if } xy_G^+\neq \varepsilon \\
(xy_G^+)_G^+ & \text{if } xy_G^+=\varepsilon
\end{array} \right.
\]
for any edge $xy$ of $G-\varepsilon$. More generally, any subgraph of $G$ inherits a ribbon structure from $G$ in the obvious way, by restrictions of the cyclic orders.

Throughout the paper, when we consider ribbon structures, we will assume that there is a fixed vertex $b_0$ of $G$ that we call the \emph{base vertex} (or \emph{base node}, in cases when $G$ is bipartite), and a fixed edge $b_0b_1$ incident to $b_0$ that we call the \emph{base edge}. 

\begin{remark}
\label{rem:surface}
Ribbon structures may be equivalently described by \emph{ribbon surfaces}, as follows. See Figure \ref{fig:apapirrasokatir} for an example. We consider the graph as a topological space, thicken a small neighborhood of each vertex to a disk, and orient it (hence also orient its boundary) so that the edges incident to the vertex intersect the boundary of the disk in their prescribed cyclic order. (When the vertex has degree three or more, this orientation is uniquely determined once the disk has been constructed.) Then we thicken each edge into a rectangle, attached along two opposite sides to the appropriate disks, so that the orientations extend over the rectangle. Thinking of the two attaching sides of the rectangle as `short' and the other two, running along the edge, as `long,' explains the name of the structure. Conversely, if a graph is embedded in an oriented surface, the orientation induces a ribbon structure on it which is equivalent to taking a regular neighborhood of the embedding. Finally, one may equivalently specify the base vertex and the base edge by placing a \emph{base point} on the boundary of the disk centered at $b_0$, along the bit running from $b_0b_1^-$ to $b_0b_1$.
\end{remark}

Let $G$ be a graph, $T$ be a spanning tree of $G$, and $\varepsilon\in T$ be an edge. As $T$ is a spanning tree, $T-\varepsilon$ is a graph with two connected components. We call the set of edges of $G$ connecting two vertices from different components of $T-\varepsilon$ the \emph{fundamental cut} of $\varepsilon$ in $T$, and denote it by $C^*(T,\varepsilon)$. Now for an edge $\varepsilon$ of $G$ that is not part of $T$, adding $\varepsilon$ to $T$ creates a unique cycle, which we call the \emph{fundamental cycle} of $\varepsilon$ with respect to $T$ and denote with $C(T,\varepsilon)$.

\subsection{The interior polynomial}
\label{ssec:interior}

The following definition plays a central role in our paper. 

\begin{definition}
\label{def:hypertree}
Let $G$ be a bipartite graph and $E$ one of its vertex classes. We say that the vector $\mathbf f\colon E\to\Z_{\ge0}$ is a  \emph{hypertree} on $E$ if there exists a spanning tree $T$ of $G$ that has degree $d_T(e)=\mathbf f(e)+1$ at each node $e\in E$. We denote the set of all hypertrees on $E$ by $B_{E}$.
\end{definition}

Disconnected bipartite graphs have no spanning trees and thus no hypertrees, either. It is slightly more natural to call the objects above hypertrees in the hypergraph $\HH=(V,E)$, as opposed to in $G=\bip\HH$, and to denote their set with $B_\HH$ instead of $B_E$. In that sense, hypertrees generalize (characteristic vectors of) spanning trees from graphs to hypergraphs (cf.\ \cite[Remark 3.2]{hiperTutte}). We will often adopt this point of view, even though the wording of Definition \ref{def:hypertree} suits our current purposes better.

The set $B_E$ is such that $(\conv B_E)\cap\Z^E=B_{E}$, where $\conv$ denotes the usual convex hull in $\R^E$, cf.\ \cite[Lemma 3.4]{hiperTutte}. We will call $\conv B_{E}=\conv B_\HH$ the \emph{hypertree polytope} of $\HH$. In the special case of (characteristic vectors of) spanning trees, hypertrees are exactly the vertices of $\conv B_E$, which in that case is known as the spanning tree polytope.

We note that for all hypertrees $\mathbf f$ on $E$, it holds that 
\[\sum_{e\in E}\mathbf f(e)=|V|-1\]
is independent of $\mathbf f$ \cite[Theorem 3.4]{hiperTutte}. Hence $B_E$ and $\conv B_E$ lie along an affine hyperplane of $\R^E$.

For a spanning tree $T$ of $\bip \HH$, let $\mathbf f_E(T)$ be the hypertree on $E$ \emph{realized} or \emph{induced} by $T$, i.e.,
\[\mathbf f_E(T)(e)=d_T(e)-1  \quad \text{for all }e\in E.\]
Similarly, let $\mathbf f_V(T)$ be the hypertree on $V$ realized by $T$.

The definition of the interior polynomial is based on hypertrees and a natural generalization of internal activity used in the case of graphs (and matroids). First, if the hypertree $\mathbf{f}\in B_{E}$ and the hyperedges $e, e'\in E$ are such that changing the value $\mathbf{f}(e)$ to $\mathbf{f}(e)-1$ and the value $\mathbf{f}(e')$ to $\mathbf{f}(e')+1$ results in another hypertree $\mathbf{f'}$, then let us say that $\mathbf{f}$ and $\mathbf{f'}$ are related by a \emph{transfer of valence} from $e$ to $e'$. Another expression we will use is that $\mathbf f$ is such that $e$ \emph{can transfer valence} to $e'$.

\begin{definition}
\label{def:activity}
Let $(V,E)$ be a hypergraph with an order on the set $E$. A hyperedge $e\in E$ is \emph{internally active} for the hypertree $\mathbf{f}$, with respect to the order, if $\mathbf f$ is such that $e$ cannot transfer valence to any smaller hyperedge. Let $\iota(\mathbf{f})$ denote the number of internally active hyperedges with respect to $\mathbf{f}$ and call this value the \emph{internal activity} of $\mathbf f$. 
\end{definition}

We call a hyperedge \emph{internally inactive} for a hypertree if it is not internally active and denote the number of such hyperedges (for a given $\mathbf{f}$) by $\bar\iota(\mathbf{f})=|E|-\iota(\mathbf{f})$. This value will be called the \emph{internal inactivity} of $\mathbf{f}$. Note that $\iota(\mathbf{f})$ and $\bar\iota(\mathbf{f})$ both depend on the order used on $E$.

\begin{definition}
\label{def:I}
Let $\HH=(V,E)$ be a hypergraph so that $\bip\HH$ is connected. For some fixed linear order on $E$ we consider the generating function of internal inactivity, $I_\HH(\xi)=\sum_{\mathbf{f}\in B_{E}} \xi^{\bar\iota(\mathbf{f})}$, and call it the \emph{interior polynomial} of $\HH$. By \cite[Theorem 5.4]{hiperTutte} (see also \cite[Subsection 2.2]{KP_Ehrhart}), $I_\HH$ does not depend on the order.
\end{definition}

\begin{ex}
Any connected bipartite graph serves as the underlying bipartite graph for two hypergraphs, a transpose pair. For the graph that appears in examples throughout the paper (first in Figure \ref{fig:apapirrasokatir}), both of these hypergraphs have the interior polynomial $1+3\xi+3\xi^2$, as computed in \cite[Example 5.6]{hiperTutte}. In particular, the number of hypertrees on either vertex class is $7$.
\end{ex}

Note that if we specialize our notion of internal activity to graphs, then external edges of a spanning tree become internally active. This is not the case for the definition used by Tutte and Bernardi, which we review in the next subsection. However, the number of internally inactive edges is the same as the number of `internal, not internally active' edges in the original definition. This subtlety can hardly be avoided because for a hyperedge, there is no natural notion of being inside or outside of a hypertree. Even if we defined `$e$ being external to $\mathbf f$' by $\mathbf f(e)=0$, the number of such hyperedges \emph{would} depend on $\mathbf f$.

Also, as opposed to the definition of the Tutte polynomial, in Definition \ref{def:I} we count inactive hyperedges instead of active ones. But since the number of external edges \emph{is} the same for all spanning trees (namely, the first Betti number $\beta_1=|E|-|V|+1$ of the graph), all these tweaks in the definition just mirror and shift the distribution of the `classical' internal activity statistic. The precise claim is that if the hypergraph $\HH$ happens to be a graph with Tutte polynomial $T(x,y)$, then its interior polynomial is 
\[I_\HH(\xi)=\xi^{|V|-1}T(1/\xi,1).\]

We will almost always work in the larger context of hypergraphs and use the notions of Definitions \ref{def:activity} and \ref{def:I}. Because the difference is so minimal, we will not introduce separate terminology, only separate notation for internal activity in the `usual' sense: for a spanning tree $T$ of the graph $G$ with vertex set $V$, and an ordering of the set $E$ of edges, we let $i(T)=\iota(T)-\beta_1(G)=|V|-1-\bar\iota(T)$.

Hypergraphs also have an exterior polynomial invariant. For this paper it is less important but we will indicate its definition and various properties in Section \ref{sec:statements}.

\subsection{The Bernardi process for graphs}
\label{subsec:Bernardi_for_graphs}

In this subsection we recall the Bernardi process for ordinary graphs, as well as the Bernardi-type definition of the Tutte polynomial \cite{Bernardi_first,Bernardi_Tutte}.

The basic notion in \cite{Bernardi_first} is the \emph{tour of a spanning tree}. Let $G$ be a ribbon graph, and $T$ be a spanning tree of $G$. We specify a base vertex $b_0$ of $G$ and a base edge $b_0b_1$ incident to $b_0$.

\begin{figure} 
	\begin{center}
		\begin{tikzpicture}[-,>=stealth',auto,scale=0.6,
		thick]
		\tikzstyle{o}=[circle,draw]
		\node[o] (1) at (8, 0) {{\small $v_1$}};
		\node[o] (2) at (4, 1.5) {{\small $v_2$}};
		\node[o] (3) at (0, 0) {{\small $v_3$}};
		\node[o] (4) at (4, -1.5) {{\small $v_4$}};
		\path[every node/.style={font=\sffamily\small}, line width=0.8mm]
		(1) edge node [above] {$e_1$} (2)
		(3) edge node [below] {$e_3$} (4)
		(2) edge node {$e_5$} (4);
		\path[every node/.style={font=\sffamily\small},dashed]
		(4) edge node [below] {$e_4$} (1)
		(2) edge node [above] {$e_2$} (3);
		\end{tikzpicture}
	\end{center}
	\caption{An example of the tour of a spanning tree. Let the ribbon structure be the one induced by the positive orientation of the plane. The edges of the tree are drawn by thick lines, the non-edges by dashed lines. With $b_0=v_1, b_1=v_2$, we get the tour $v_1,e_1$; $v_2,e_2$; $v_2,e_5$; $v_4,e_3$; $v_3,e_2$; $v_3,e_3$; $v_4,e_4$; $v_4,e_5$; $v_2,e_1$; $v_1,e_4$.}
\label{fig:tour_of_a_tree}
\end{figure} 

The tour of $T$ is the following sequence of vertex-edge pairs: The current vertex at the first step is $b_0$, and the current edge is $b_0b_1$. If the current vertex is $x$, the current edge is $xy$, and $xy\notin T$, then the current vertex of the next step is $x$, and the current edge of the next step is $xy^+$. If the current vertex is $x$, the current edge is $xy$, and $xy\in T$, then the current vertex of the next step is $y$, and the current edge of the next step is $yx^+$. In the first case we say that the tour \emph{skips} $xy$ and in the second case we say that the tour \emph{traverses} it. The tour stops right before when $b_0$ would once again become current vertex with $b_0b_1$ as current edge. See Figure \ref{fig:tour_of_a_tree} for an example.

\begin{remark}
	\label{rem:topological_interpretation_of_tour_of_a_tree}
	The sequence of current vertex-edge pairs of the tour of a spanning tree can also be described using the topology of the ribbon surface of Remark \ref{rem:surface}. Namely, we start from the base point (meant as a boundary point of the surface) and proceed along the boundary in the positive direction. When we reach a vertex of one of the rectangle pieces, then
	\begin{itemize}
		\item the midpoint of the disk, together with the core of the rectangle, become current, and
		\item if the edge is in the tree, we proceed along the long side of the rectangle to the next adjacent disk (traversal), and continue along its boundary, or
		\item if the edge is not in the tree, then we proceed along the short side (skipping the edge), and then on along the boundary of the same disk. 
	\end{itemize}
\end{remark}

Bernardi proved the following:

\begin{lemma}[{\cite[Lemma 5]{Bernardi_first}}]
\label{l:T-tour_cyclic_perm}
In the tour of a spanning tree $T$, each edge $xy$ of $G$ becomes current edge twice, in one case with $x$ as current vertex, and in the other case with $y$ as current vertex.
\end{lemma}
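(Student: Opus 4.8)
The plan is to prove this by analyzing the tour as a walk on the ribbon surface (equivalently: a combinatorial closed walk) and showing it is, in an appropriate sense, the boundary of a regular neighborhood of $T$. First I would set up the right object to track: instead of vertex-edge pairs $xy$, I would work with the \emph{half-edges} (darts) of $G$, i.e. an edge $\varepsilon=xy$ gives two darts, one based at $x$ and one based at $y$. The pair $(x,xy)$ in Bernardi's definition is exactly the dart of $\varepsilon$ based at $x$. So the claim to prove is that the tour visits every dart of $G$ exactly once before returning to the starting dart. Since there are $2|E(G)|$ darts, it suffices to show (a) the tour is a cyclic sequence returning to its start, and (b) no dart is visited twice; these together with finiteness force every dart to appear exactly once, which is precisely the statement.

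The key structural step is to describe the successor map on darts and recognize it as a permutation. Define, on the set of darts, the involution $\alpha$ swapping the two darts of each edge, and the rotation $\sigma$ sending a dart based at $x$ to the next dart based at $x$ in the cyclic (ribbon) order; these are the standard generators attached to a ribbon graph. Reading off Bernardi's two cases: if the current dart $d=(x,xy)$ has $xy\notin T$, the next dart is $(x,xy^+)=\sigma(d)$; if $xy\in T$, the next dart is $(y,yx^+)=\sigma(\alpha(d))$. Thus the ``next dart'' operation is the map $\varphi$ that equals $\sigma$ on darts of non-tree edges and $\sigma\circ\alpha$ on darts of tree edges. The crucial observation is that $\varphi$ is a bijection on the finite set of all darts: it is a composition of the permutation $\sigma$ with the permutation that is the identity on non-tree darts and $\alpha$ on tree darts (the latter is an involution, hence a permutation, because $\alpha$ preserves the set of tree darts). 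A permutation decomposes the dart set into disjoint cycles; the tour is, by definition, precisely the $\varphi$-orbit of the base dart $(b_0,b_0b_1)$, traversed until it closes up. This already gives that each dart in the tour is visited exactly once within one pass, and that the tour is genuinely cyclic.

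What remains — and this is the main obstacle — is to show that the single orbit of $\varphi$ containing the base dart is \emph{all} of the darts, i.e. that $\varphi$ acts transitively, equivalently that the tour has length $2|E(G)|$ rather than missing some edges. Here is where connectivity of $G$ and the tree structure of $T$ enter, and it is genuinely the heart of Lemma~5 of \cite{Bernardi_first}. I would argue as follows. The cycles of $\varphi$ correspond exactly to the faces (boundary components) of the ribbon surface obtained by cutting the ribbon surface of $G$ along the cocores of the non-tree edges — equivalently, to the boundary of a regular neighborhood of the spanning tree $T$ inside the ribbon surface of $G$. Since $T$ is a tree, it is contractible, so a regular neighborhood of $T$ is a disk, and a disk has connected boundary; hence $\varphi$ has a single cycle, of length $2|E(G)|$. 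Making this topological statement airtight requires either the ribbon-surface formalism of Remark~\ref{rem:surface} (pushing the tour onto $\partial N(T)$ and checking it traverses each edge-rectangle twice and each vertex-disk arc once) or a direct induction on $|E(G)|$: if $G=T$ the claim is the standard ``tour of a tree visits each edge twice'' fact, proved by induction on the number of vertices by peeling a leaf; and adding a non-tree edge $\varepsilon$ to a ribbon graph splices the two visits to $\varepsilon$ into the existing single cyclic tour without disconnecting it, because skipping $\varepsilon$ at each of its two ends just inserts $\varepsilon$ twice into the cyclic order of darts. Either way, the point is that non-tree edges never break the cyclicity, and the tree, being connected and acyclic, forces a single cycle. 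I would present the induction route in the writeup, since it stays combinatorial and matches the level of the rest of the paper, remarking that it is equivalent to the surface picture of Remark~\ref{rem:surface}. Once transitivity is established, the statement follows immediately: each of the $2|E(G)|$ darts is hit exactly once, and for a fixed edge $xy$ its two darts are $(x,xy)$ and $(y,xy)$, so $xy$ becomes the current edge exactly twice, once with each of its endpoints as current vertex.
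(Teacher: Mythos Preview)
The paper does not give its own proof of this lemma: it is simply quoted from \cite{Bernardi_first} and used as a black box. So there is nothing in the paper to compare your argument against at the level of details.

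That said, your proof is correct and is essentially the standard one. Recasting the tour as the orbit of the base dart under the permutation $\varphi=\sigma\circ\tau$ (with $\tau$ equal to the identity on non-tree darts and to $\alpha$ on tree darts) is exactly the right move: $\tau$ is a well-defined involution because $\alpha$ preserves the set of tree darts, so $\varphi$ is a bijection, and the only content left is transitivity. Both of your transitivity arguments are valid. The topological one matches the paper's Remark~\ref{rem:surface}: the $\varphi$-cycles are the boundary components of the ribbon neighborhood of $T$ with the non-tree edges cut at their cocores, and since $T$ is a tree this neighborhood is a disk with stubs, hence has connected boundary. The inductive one is also fine; the key check you sketch---that inserting a non-tree edge splices its two darts into the unique existing $\varphi$-cycle without breaking it---is straightforward once you note that at each endpoint the new dart is simply inserted between two consecutive $\sigma$-values, so the single cycle just gets longer by one at each of two places. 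Either route is appropriate here; the inductive one is closer in spirit to how Bernardi argues in \cite{Bernardi_first}.
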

 
In other words, the tour of $T$ lists the pairs $(u,\varepsilon)$, where $u$ is a vertex of $G$ and $\varepsilon$ is an edge of $G$ incident to $u$, in a linear order from smallest to largest. We will denote this ordering by $<_T$, and write $(u,\varepsilon)\leq_T (v,\varepsilon')$ if either $u=v$ and $\varepsilon=\varepsilon'$, or $(u,\varepsilon)<_T (v,\varepsilon')$.

Now $\leq_T$ induces an ordering of the edges of $G$: Let $uv$ be smaller than $zw$ if $\min_{\leq_T}\{(u,uv),(v,uv)\}\leq_T \min_{\leq_T}\{(z,zw),(w,zw)\}$. We denote this order, too by $\leq_T$.

The \emph{internal and external embedding activities} of a spanning tree $T$ of $G$ are defined as the internal and external activities of $T$ with respect to the order $\leq_T$ of edges. Let us denote them by $ie(T)$ and $ee(T)$, respectively. That is,
\begin{itemize}
\item $ie(T)$ is the number of edges $\varepsilon$ of $T$ so that $\varepsilon$ is the $<_T$-minimal element of the fundamental cut $C^*(T,\varepsilon)$;
\item $ee(T)$ is the number of non-edges $\varepsilon$ of $T$ so that $\varepsilon$ is the $<_T$-minimal element of the fundamental cycle $C(T,\varepsilon)$.
\end{itemize}
Bernardi gave the alternative definition
\[T_G(x,y)=\sum_{T \text{ is a spanning tree in }G} x^{ie(T)}y^{ee(T)}\]
for the Tutte polynomial of a graph. In particular, it follows from his results that this expression does not depend on the ribbon structure, base vertex, or base edge.

The tour of a spanning tree can also be used to give a bijection between spanning trees and so called break divisors. This was done originally by Bernardi \cite{Bernardi_first} (not yet using the terminology of break divisors), then further studied by Baker and Wang \cite{Baker-Wang}. A \emph{break divisor} for a graph $G=(V,E)$ is a vector $\mathbf z\in\Z^V$ so that $\mathbf{d}-\mathbf{1}-\mathbf z$ is a hypertree on $V$ in $\bip G$, that is the graph obtained from $G$ by adding a new vertex halfway along each edge. (Here $\mathbf{d}-\mathbf{1}$ denotes the vector whose $v$-component is the degree of $v$ in $G$ minus one for each $v\in V$.) Baker and Wang \cite{Baker-Wang} proved that given a spanning tree $T$, if one takes its tour and at each vertex, counts the non-edges of $T$ that first become current in conjunction with that vertex, the resulting values give a break divisor. Conversely, given a break divisor $\mathbf z$, one may start a walk on the ribbon graph and whenever an edge is encountered which is such that $\mathbf z$ remains a break divisor in the smaller graph after the edge's removal, cut that edge. By the end of the walk, the remaining edges form a spanning tree.

Said in another way, the Bernardi process on a graph gives a bijection between the sets of hypertrees on $V$ and on $E$. Our Bernardi process for hypergraphs generalizes this bijection (see Remark \ref{rem:Bernardi_altalanosit_BW} for more detail).

\section{The root polytope and its dissections} 
\label{sec:root_polytope}

The root polytope of a bipartite graph first appeared in the work of Ohsugi and Hibi \cite{hibi} and was defined anew by Postnikov \cite{alex}. For a detailed list of its properties we refer the reader to \cite[Section 3]{KP_Ehrhart} (and to \cite{alex} for many of the proofs). Here we only repeat the most important points. Let $G$ be a bipartite graph with color classes $E$ (as in emerald) and $V$ (as in violet). 

\begin{definition} 
\label{def:rootpolytope}
In the Euclidean space $\R^E\oplus\R^V$ let us write $\mathbf x$ for the standard basis vector that corresponds to $x\in E \cup V$. The \emph{root polytope} of $G$, denoted by $Q_G$, is the convex hull of the vectors $\mathbf e +\mathbf v$ for all edges $ev$ of $G$.
\end{definition}

We get an isometric polytope if we replace $\mathbf e +\mathbf v$ with $\mathbf e -\mathbf v$ in the construction. Note how the definition is inspired by the standard proof that a graphic matroid is representable. However in the theory that Postnikov built it is important that we specifically use real coefficients and examine $Q_G$ from the point of view of convex geometry. It turns out that the root polytope reflects certain properties of the bipartite graph in a non-trivial and very effective way.
 
A set of vertices of $Q_G$ is affine independent if and only if the corresponding subgraph of $G$ is cycle-free. (Note that vertices of $Q_G$ and edges of $G$ correspond bijectively.) In particular, the dimension of $Q_G$ is one less than the number of edges in a spanning forest of $G$. In the case when $G$ is connected, which we usually assume, a spanning forest is a spanning tree, and the dimension is $|E|+|V|-2$. The one-to-one correspondence (for connected $G$)
\[\{\,\text{spanning trees of }G\,\}\longleftrightarrow\{\,\text{maximal simplices of }Q_G\,\}\]
will be crucial for the rest of the paper. Note that the simplex corresponding to the tree $T$ is none other than its root polytope $Q_T$.

The maximal simplices of $Q_G$ share the same volume. There is a description \cite[Lemma 12.6]{alex} for when two maximal simplices intersect in a common face, given in terms of the corresponding spanning trees: two trees are compatible in this sense if and only if there does not exist a cycle in $G$ so that its first, third, fifth etc.\ edges come from one tree and its second, fourth, sixth etc.\ edges come from the other. A \emph{triangulation} of the root polytope is a collection of maximal simplices whose union is $Q_G$ and each pair of which do intersect in a common face. When the second condition is weakened to require only that the interiors of the simplices be disjoint, we get the notion of a \emph{dissection}. The observation on volumes implies that the number of maximal simplices in a dissection of $Q_G$ depends only on $G$. In Examples \ref{ex:non-triangulation} and \ref{ex:otszog} we exhibit dissections that are not triangulations.

A thorough look into dissections reveals some spectacular properties. Let us fix a dissection of $Q_G$ and consider the corresponding collection of spanning trees in $G$. We claim that any hypertree (either on $E$ or on $V$) is realized by exactly one of our chosen trees. (Consequently the numbers of hypertrees on $E$ and on $V$ are the same.) This is established for triangulations in \cite{alex}, but the same proof applies in general, as follows. 

The polytope $Q_G$ contains the set of \emph{emerald markers} (which form an affine transformation of the set $B_E$ of hypertrees on $E$)
\[\frac1{|V|}B_E+\frac1{|E|\cdot|V|}\mathbf i_E+\frac1{|V|}\mathbf i_V\]
and a similarly defined set of \emph{violet markers}. Here $\mathbf i_S$ stands for the characteristic function of a subset $S$ of $E\cup V$, viewed as a vector in $\R^E\oplus\R^V$. Any maximal simplex in $Q_G$ contains, in its interior, exactly one marker of each color: these are (essentially) the hypertrees on $E$ and on $V$ realized by the tree that corresponds to the simplex \cite[Lemma 14.9]{alex}. If some simplices form a dissection, then it is also true that each marker is contained by a unique simplex. Hence the two sets of markers (that is, the two sets of hypertrees $B_E$ and $B_V$) are equinumerous with each other and with the set of maximal simplices in our (arbitrary) dissection. In particular, the following holds.

\begin{thm}
\label{l:quasitriang_gives_bijection}
Let $\mathcal{T}$ be a set of spanning trees of a bipartite graph $G$ such that the simplices $\{\,Q_T\mid T\in\mathcal{T}\,\}$ form a dissection of $Q_G$. Then the mapping assigning $\mathbf f_V(T)$ to $\mathbf f_E(T)$ for each $T\in\mathcal{T}$ is a bijection between $B_E$ and $B_V$.
\end{thm}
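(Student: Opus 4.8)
The plan is to read off the statement from the marker argument that was just sketched in the paragraph preceding it, so the proof is essentially a matter of assembling the pieces and citing Postnikov's lemmas. First I would recall the set of emerald markers
\[
M_E=\frac1{|V|}B_E+\frac1{|E|\cdot|V|}\mathbf i_E+\frac1{|V|}\mathbf i_V\subset Q_G,
\]
which is an affine image of $B_E$, and the analogously defined set $M_V$ of violet markers, an affine image of $B_V$. The key input, \cite[Lemma 14.9]{alex}, says that for a spanning tree $T$ the simplex $Q_T$ contains in its interior exactly one emerald marker and exactly one violet marker, and these are the markers corresponding to $\mathbf f_E(T)$ and $\mathbf f_V(T)$, respectively.

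Next I would use the hypothesis that $\{\,Q_T\mid T\in\mathcal T\,\}$ is a dissection. Since the simplices cover $Q_G$ and have pairwise disjoint interiors, and since no marker lies on the boundary of any maximal simplex (markers are interior points of the simplices that contain them, by the cited lemma, and every point of $Q_G$ lies in some $Q_T$), each emerald marker lies in the interior of exactly one $Q_T$ with $T\in\mathcal T$, and likewise for violet markers. Translating back through the affine bijections $B_E\leftrightarrow M_E$ and $B_V\leftrightarrow M_V$: every hypertree $\mathbf g\in B_E$ equals $\mathbf f_E(T)$ for exactly one $T\in\mathcal T$, and every hypertree $\mathbf h\in B_V$ equals $\mathbf f_V(T)$ for exactly one $T\in\mathcal T$. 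In particular the maps $T\mapsto\mathbf f_E(T)$ and $T\mapsto\mathbf f_V(T)$ are both bijections from $\mathcal T$ onto $B_E$ and onto $B_V$ respectively.

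Finally, the assignment in the statement is the composite $\mathbf f_E(T)\mapsto T\mapsto\mathbf f_V(T)$, i.e.\ the composition of the inverse of the first bijection with the second bijection; a composition of bijections is a bijection, so $\mathbf f_V\circ(\mathbf f_E)^{-1}\colon B_E\to B_V$ is well defined and bijective, which is exactly the claim. I do not expect a serious obstacle here, since the substantive geometric facts (maximal simplices of $Q_G$ correspond to spanning trees, they have equal volume, and each contains one marker of each color in its interior) are all quoted from \cite{alex}; the only point requiring a line of care is the observation that a marker never sits on the shared boundary of two maximal simplices — this follows because markers are interior points of whichever simplex contains them, so the dissection's covering property already pins down that simplex uniquely. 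If one wanted to be fully self-contained one could instead run the counting argument: a dissection has a fixed number of maximal simplices (equal volumes), each contributes one emerald and one violet marker, no marker is double-counted, and every marker is hit since the simplices cover $Q_G$; hence $|B_E|=|\mathcal T|=|B_V|$ and the surjective finite maps are bijections.
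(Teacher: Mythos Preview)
Your proposal is correct and follows essentially the same approach as the paper: the paper's proof is the marker argument given in the two paragraphs immediately preceding the theorem statement, and you have reproduced it faithfully (with the same citation of \cite[Lemma 14.9]{alex}), even spelling out more carefully why a marker cannot lie on the common boundary of two simplices in the dissection.
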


By \cite[Theorem 3.10]{KP_Ehrhart}, the $h$-vector of any triangulation of $Q_G$ is equivalent to the Ehrhart polynomial of $Q_G$. This property, too, extends to dissections, at least in those cases when we are able to define an $h$-vector for such objects. In other words, one is able to generalize \cite[Remark 3.12]{KP_Ehrhart} as follows.

Let us call a dissection \emph{shellable} if it has a \emph{shelling order}, that is, a total order of the maximal simplices so that, starting from the second one, each maximal simplex $\sigma$ intersects the union of the previous ones in a non-empty union of facets (codimension one faces) of $\sigma$. For a dissection with a shelling order, let $a_i$ denote the number of maximal simplices for which the number of said facets is exactly $i$. We put $a_0=1$, accounting for the first simplex in the order. Let us define the \emph{$h$-vector} of the shelling order to be the finite sequence $(a_0,a_1,\ldots)$.

The Ehrhart polynomial of $Q_G$ is the unique polynomial $\varepsilon_G$ such that for nonnegative integers $k$, we have $\varepsilon_{G}(k)=|(k\cdot Q_G)\cap(\Z^E\oplus\Z^V)|$. If we have a shellable dissection of $Q_G$ (for a connected $G$), then, putting $d=\dim Q_G=|E|+|V|-2$, the Ehrhart polynomial of $Q_G$ can be expressed as 
\begin{equation}
\label{eq:Ehrhart}
\varepsilon_{G}(k)=a_0\binom{d+k}{d}+a_1\binom{d+k-1}{d}+\cdots+a_i\binom{d+k-i}{d}+\cdots.
\end{equation}

The proof of \eqref{eq:Ehrhart} is based on \cite[Lemma 3.3]{KP_Ehrhart} and an easy simplex-by-simplex counting argument. Indeed, $\binom{d+k-i}{d}$ is the number of lattice points in a standard $d$-dimensional simplex of sidelength $k-i$, and \cite[Lemma 3.3]{KP_Ehrhart} says that for our purposes, all maximal simplices in the root polytope (and all their faces) behave just like standard simplices. Now if, in the $k$ times inflated root polytope, the lattice points along $i$ of the facets of a maximal simplex have already been counted, then what remains to count is the lattice points in a simplex of sidelength not $k$ but $k-i$.

Since the binomial coefficients in \eqref{eq:Ehrhart} are linearly independent as polynomials of $k$ \cite[Lemma 3.8]{KP_Ehrhart}, from the uniqueness of the Ehrhart polynomial it follows that the $h$-vectors of all shelling orders of all shellable dissections of $Q_G$ coincide. (Triangulations have $h$-vectors even when they are not shellable. For triangulations of $Q_G$ by maximal simplices, all their $h$-vectors are the same, too \cite[Theorem 3.10]{KP_Ehrhart}.) Furthermore, \cite[Equation (5.1)]{KP_Ehrhart} (the main theorem of that paper) states that the same coefficient sequence gives the interior polynomial of both hypergraphs $(V,E)$ and $(E,V)$ that are induced by $G$:

\begin{thm}
\label{t:interior_poly_ehrhart_h-vector}
For a connected hypergraph $\HH=(V,E)$, if the Ehrhart polynomial $\varepsilon_{\bip \HH}$ of $Q_{\bip \HH}$ can be expressed as 
\begin{equation*}
\varepsilon_{\bip \HH}(k)=a_0\binom{d+k}{d}+a_1\binom{d+k-1}{d}+\cdots+a_i\binom{d+k-i}{d}+\cdots,
\end{equation*}
where $d=\dim Q_G=|E|+|V|-2$, then the interior polynomial of $\HH$ is 
\[I_{\HH}(x)=a_0+a_1x+\cdots+a_ix^i+\cdots.\]
\end{thm}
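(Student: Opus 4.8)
The plan is to recognize the statement as a repackaging of two facts already recalled above, and to fill in the bridge between them. First I would pin down the sequence $(a_0,a_1,\dots)$ intrinsically. Postnikov \cite{alex} guarantees that $Q_G$, with $G=\bip\HH$, admits a triangulation into maximal simplices; by \cite[Theorem 3.10]{KP_Ehrhart} the $h$-vector of any such triangulation, substituted into the right-hand side of \eqref{eq:Ehrhart}, reproduces the Ehrhart polynomial of $Q_G$. Since the binomials $\binom{d+k-i}{d}$ are linearly independent as polynomials in $k$ (\cite[Lemma 3.8]{KP_Ehrhart}) and the Ehrhart polynomial is unique, the coefficients $a_i$ in the hypothesized expansion are forced: they coincide with the $h$-vector of every triangulation of $Q_G$ and, by the generalization \eqref{eq:Ehrhart}, with the $h$-vector of every shellable dissection. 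So nothing is lost in replacing ``$\varepsilon_{\bip\HH}$ expands with coefficients $a_i$'' by ``$(a_i)$ is the $h$-vector of a(ny) triangulation of $Q_G$.''

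It then remains to exhibit one triangulation (or shellable dissection) of $Q_G$ whose $h$-vector is the coefficient sequence of $I_\HH$; this is exactly \cite[eq.\ (5.1)]{KP_Ehrhart}, which I would reconstruct as follows. Fix the linear order $e_1<\dots<e_m$ on $E$ defining $I_\HH$, and a triangulation $\mathcal T$ of $Q_G$; by Theorem \ref{l:quasitriang_gives_bijection}, $\mathbf f_E$ picks out, for each $\mathbf f\in B_E$, the unique tree $T_{\mathbf f}$ in $\mathcal T$ realizing $\mathbf f$, so $\mathcal T=\{Q_{T_{\mathbf f}}\mid \mathbf f\in B_E\}$. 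Choosing $\mathcal T$ conveniently (this is where Postnikov's \cite{alex} or an analogous explicit construction enters) and ordering its simplices by a suitable linear extension of the ``transfer of valence toward larger hyperedges'' order on $B_E$ (so that whenever $\mathbf f'$ arises from $\mathbf f$ by moving one unit of valence from a smaller to a larger hyperedge, $\mathbf f$ precedes $\mathbf f'$), the crux is the bookkeeping claim that the facets of $Q_{T_{\mathbf f}}$ lying in the union of the earlier simplices are in bijection with the hyperedges $e$ from which $\mathbf f$ can transfer valence to a smaller hyperedge. In particular their number is $\bar\iota(\mathbf f)$, so the order is a shelling and $a_i=\#\{\mathbf f\in B_E:\bar\iota(\mathbf f)=i\}=[x^i]I_\HH(x)$; comparing with the first paragraph finishes the proof.

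The main obstacle is precisely that crux: translating the geometric event ``the facet of $Q_{T_{\mathbf f}}$ obtained by deleting the edge $\varepsilon$ has already been covered'' into the combinatorial event ``$\mathbf f$ can transfer valence from the hyperedge incident to $\varepsilon$ to some smaller hyperedge.'' This rests on Postnikov's description \cite[Lemma 12.6]{alex} of when two maximal simplices of $Q_G$ share a facet — equivalently, on identifying the neighboring tree $T_{\mathbf f}-\varepsilon+\varepsilon'$ across that facet and reading off how its hypertree on $E$ differs from $\mathbf f$ — together with a check that exactly the ``downhill'' transfers land before $\mathbf f$ in the chosen order and that distinct such transfers produce distinct absorbed facets. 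Everything else (uniqueness of the Ehrhart polynomial, linear independence of the binomials, the common normalized volume of maximal simplices of $Q_G$, and the hypertree bijection of Theorem \ref{l:quasitriang_gives_bijection}) is already available from the material quoted above.
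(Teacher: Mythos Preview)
The paper does not prove this theorem at all: it is quoted as the main result of \cite{KP_Ehrhart} (specifically \cite[eq.\ (5.1)]{KP_Ehrhart}), and the surrounding discussion in Section~\ref{sec:root_polytope} only supplies the first half of your argument (uniqueness of the $a_i$ via \eqref{eq:Ehrhart} and linear independence of the binomials). So there is no proof in the paper to compare against; your proposal is really an attempt to reconstruct \cite{KP_Ehrhart}.

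Your outline is sound as a strategy and you correctly isolate the crux. But the step ``choose $\mathcal T$ conveniently and order it by a linear extension of the valence-transfer order'' hides essentially all of the work: for a generic triangulation there is no reason the facets of $Q_{T_{\mathbf f}}$ absorbed by earlier simplices should biject with hyperedges that can shed valence downward. What makes this go through in \cite{KP_Ehrhart} is a very specific triangulation together with a matching combinatorial description of base cuts; your paragraph invoking \cite[Lemma 12.6]{alex} points in the right direction but does not supply it. One concrete issue: a facet $Q_{T_{\mathbf f}-\varepsilon}$ that is covered need not be a facet of a single neighboring simplex $Q_{T_{\mathbf f}-\varepsilon+\varepsilon'}$ in the triangulation, so ``identifying the neighboring tree across that facet'' is not available in general --- the covering may require a union of earlier simplices, and then tying the existence of such a cover to a single downward transfer of valence needs a structural argument you have not given.

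It is worth noting that the present paper, while citing the theorem, incidentally provides an independent proof of it through its own machinery: Theorem~\ref{t:J-trees_form_quasitr} and Theorem~\ref{thm:shelling} exhibit a concrete shellable dissection (by Jaeger trees) whose $h$-vector entries are counted by internally semi-passive edges, and Lemma~\ref{l:activities_description}\ref{harmadik} together with Corollary~\ref{cor:passziv_szemipassziv} translate that count into internal inactivities of hypertrees. That is exactly the ``bookkeeping claim'' you flag as the main obstacle, carried out for one explicit choice of dissection and order.
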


Both sums in Theorem \ref{t:interior_poly_ehrhart_h-vector} are of course finite. The largest $i$ so that $a_i\ne0$ is definitely no more than $d+1$; by \cite[Proposition 6.1]{hiperTutte} it is in fact at most $\min\{\,|E|,|V|\,\}-1$. For more on the degree of the interior polynomial, see \cite{Frank-K}.

K. Kato \cite{Kato} found a concise formulation of Theorem \ref{t:interior_poly_ehrhart_h-vector}, restating it as a connection between $I_\HH$ and the Ehrhart series $\mathrm{Ehr}_{\bip \HH}(x)=\sum_{s=0}^\infty\varepsilon_{\bip \HH}(s)\,x^s$ of $Q_{\bip\HH}$. Namely, if $\bip\HH$ is connected, then we have
\[\frac{I_\HH(x)}{(1-x)^{|E|+|V|-1}}=\mathrm{Ehr}_{\bip \HH}(x),\]
in other words, the interior polynomial of $\HH$ is in fact the $h^*$-vector of $Q_{\bip\HH}$.

\section{The Bernardi process for hypergraphs}
\label{sec:Bernardi-process}

In this section we describe two 
processes, both of which generalize what Bernardi defined for ordinary graphs. Sometimes, in order to distinguish them from Bernardi's original algorithm, we will refer to them as the \emph{hypergraphical Bernardi processes}. In both cases, the input consists of 
\begin{enumerate}[label=(\alph*)]
\item\label{egy} a connected hypergraph $\HH=(V,E)$
\item\label{ketto} a ribbon structure for $\bip\HH$, a base node, and a base edge
\item\label{harom} a hypertree $\mathbf f$ in $\HH$, that is, on $E$.
\end{enumerate}
Here \ref{egy} and \ref{harom} generalize Bernardi's inputs of a connected graph and a spanning tree. There is a slight difference between the graphs $G$ and $\bip G$ in that the latter is obtained from the former by placing a new vertex halfway along each edge. Bernardi fixed a ribbon structure on $G$ and not on $\bip G$, but as the new vertices of $\bip G$ are of degree $2$, the structure extends uniquely to $\bip G$. Therefore \ref{ketto} above also generalizes what Bernardi used in his approach.
 
Let us first describe the Bernardi process informally, in terms of a walk on $\bip\HH$, traversing or cutting edges as we go. We outline two different processes, depending on whether we are allowed to cut edges at their violet or emerald endpoint. (One of the two will be allowed, the other prohibited.) If the walk reaches an edge from the endpoint where we are not allowed to cut, there is no choice but to traverse the edge and continue on the other side. In the other case we will have a choice: either traverse the edge as above, or cut it (remove it from the graph) and continue with the next edge incident to our current node. What governs this choice is whether the values of $\mathbf f$ still define a hypertree after cutting the edge. (That is, whether the smaller graph still has a spanning tree that realizes $\mathbf f$.) If they do, we cut; otherwise, we keep and traverse. (Note that this is the same principle with which Baker and Wang define their mapping from break divisors to spanning trees.)

We formalize these ideas as follows. Let the base node be $b_0$ and the base edge be $b_0b_1$. See Figure \ref{fig:apapirrasokatir} for an illustration of Definition \ref{d:Bernardi,vi,vi}.

\begin{definition}[Bernardi process, hypertree on emerald nodes, cut at violet nodes (ht:$E$, cut:$V$)] \label{d:Bernardi,em,vi}
Given is a hypertree $\mathbf f$ on $E$. The process maintains a current edge and a current graph at any moment. At the beginning, the current graph is $\bip \HH$. If $b_0$ is a violet node, then at the beginning, the current edge is $b_0b_1$. If $b_0$ is an emerald node, then the current edge at the beginning is $b_1b_0^+$, and we say that $b_0b_1$ was traversed from the emerald direction.

In each step, we check whether for the current graph $G$ and the current edge $ve$, the vector $\mathbf f$ is a hypertree on the emerald nodes of $G-ve$. If the answer is yes, let the current graph of the next step be $G-ve$ and the current edge be $ve_G^+$. We say that $ve$ was \emph{removed} or \emph{deleted} from the graph. If the answer is no, let the current graph of the next step be $G$,  and let the current edge of the next step be $we_G^+$, where $ew = ev_G^+$. In this case we say that $ve$ is traversed from the violet direction and $ew$ is traversed from the emerald direction.

The process stops right before when an edge would be traversed for the second time from the same direction.
\end{definition}

\begin{definition}[Bernardi process, hypertree on emerald nodes, cut at emerald nodes (ht:$E$, cut:$E$)]
\label{d:Bernardi,vi,vi}
Given is a hypertree $\mathbf f$ on $E$. The process maintains a current edge and a current graph at any moment. At the beginning, the current graph is $\bip \HH$. If $b_0$ is an emerald node, then at the beginning, the current edge is $b_0b_1$. If $b_0$ is a violet node, then the current edge at the beginning is $b_1b_0^+$, and we say that $b_0b_1$ was traversed from the violet direction.

In each step, we check whether for the current graph $G$ and the current edge $ev$, the vector $\mathbf f$ is a hypertree on the emerald nodes of $G-ev$. If the answer is yes, let the current graph of the next step be $G-ev$ and the current edge be $ev_G^+$. We say that $ev$ was \emph{removed} or \emph{deleted} from the graph. If the answer is no, let the current graph of the next step be $G$,  and let the current edge of the next step be $dv_G^+$, where $vd = ve_G^+$. In this case we say that $ev$ is traversed from the emerald direction and $vd$ is traversed from the violet direction.

The process stops right before when an edge would be traversed for the second time from the same direction.
\end{definition}

In both cases the current graphs form a decreasing sequence. We say that an edge was \emph{kept} by the process if it was examined as a current edge, and was not removed from the current graph. If an edge $\varepsilon$ is kept then `$\mathbf f$ cannot be realized without it' in the current graph, that is, $\varepsilon$ is part of any spanning tree realizing $\mathbf f$ in the current graph and hence in all subsequent current graphs, too. In particular, once an edge is kept, it can never get removed --- the decision of keeping is final, just like the decision of removal.

We say that an edge has been \emph{traversed} by the process if it has been traversed either from the violet direction or from the emerald direction. Traversed edges form an increasing sequence of subgraphs of $\bip\HH$. In fact, the graph of traversed edges will always be a subgraph of the current graph, but for this we have yet to show that traversed edges never get removed (cf.\ Lemma \ref{l:traversed_edge_is_not_deleted}). The problem of course is with the edges $ew$ of Definition \ref{d:Bernardi,em,vi} and $vd$ of Definition \ref{d:Bernardi,vi,vi}, which our walk traverses without examining them as current edges. It turns out that even if they are previously unexamined edges, later they will be examined and kept, but these facts are not obvious from the definition. For now, let us reinforce that even if a traversed edge was later removed, we would still count it as traversed.

\begin{figure}[h]
\begin{tikzpicture}[scale=.23]

\path [fill=gray] (0,4.25) to [out=225,in=120] (-1,1.5) to [out=-60,in=200] (2,1) to [out=110,in=-110] (2,3) to [out=160,in=-45] (0,4.25);
\path [fill=gray] (6,1) to [out=-20,in=-120] (9,1.5) to [out=60,in=-45] (8,4.25) to [out=225,in=20] (6,3) to [out=-70,in=70] (6,1);
\path [fill=gray] (6,7.5) to [out=110,in=0] (4,9.7) to [out=180,in=70] (2,7.5) to [out=-20,in=135] (4,6.25) to [out=45,in=200] (6,7.5);
\path [fill=lightgray] (2,3) to [out=70,in=225] (4,6.25) to [out=-45,in=110] (6,3) to [out=200,in=-20] (2,3);
\path [fill=lightgray] (6,7.5) to [out=20,in=120] (9,7) to [out=-60,in=45] (8,4.25) to [out=135,in=-70] (6,7.5);
\path [fill=lightgray] (0,4.25) to [out=135,in=-120] (-1,7) to [out=60,in=160] (2,7.5) to [out=-110,in=45] (0,4.25);
\path [fill=lightgray] (6,1) to [out=-110,in=0] (4,-1.2) to [out=180,in=-70] (2,1) to [out=20,in=160] (6,1);

\draw [ultra thick,red] (0,2) -- (4,0);
\draw [ultra thick,red] (4,0) -- (8,2);
\draw [ultra thick,red] (8,2) -- (8,6.5);
\draw [ultra thick,red] (8,6.5) -- (4,8.5);
\draw [ultra thick,red] (4,8.5) -- (0,6.5);
\draw [ultra thick,red] (0,6.5) -- (0,2);
\draw [ultra thick,red] (0,2) -- (4,4);
\draw [ultra thick,red] (4,4) -- (4,8.5);
\draw [ultra thick,red] (4,4) -- (8,2);

\draw [fill=blue,blue] (4, 0) circle [radius=0.4];
\draw [fill=blue,blue] (4, 4) circle [radius=0.4];
\draw [fill=blue,blue] (0, 6.5) circle [radius=0.4];
\draw [fill=blue,blue] (8, 6.5) circle [radius=0.4];
\draw [fill=green,green] (0, 2) circle [radius=0.4];
\draw [fill=green,green] (8, 2) circle [radius=0.4];
\draw [fill=green,green] (4, 8.5) circle [radius=0.4];
\draw [fill] (-1,1.5) circle [radius=.3];

\draw [ultra thick] (-.15,4.1) to [out=225,in=120] (-1,1.5) to [out=-60,in=200] (2,1) to [out=20,in=160] (5.8,1.1);
\draw [ultra thick] (6.2,.9) to [out=-20,in=-120] (9,1.5) to [out=60,in=-45] (8,4.25) to [out=135,in=-70] (6.1,7.3);
\draw [ultra thick] (5.9,7.7) to [out=110,in=0] (4,9.7) to [out=180,in=70] (2,7.5) to [out=-110,in=45] (.15,4.4);
\draw [ultra thick] (1.9,1.2) to [out=110,in=-110] (2,3) to [out=70,in=225] (3.85,6.1);
\draw [ultra thick] (4.15,6.4) to [out=45,in=200] (6,7.5) to [out=20,in=120] (9,7) to [out=-60,in=45] (8.15,4.4);
\draw [ultra thick] (7.85,4.1) to [out=225,in=20] (6,3) to [out=200,in=-20] (2.2,2.9);
\draw [ultra thick] (1.8,3.1) to [out=160,in=-45] (0,4.25) to [out=135,in=-120] (-1,7) to [out=60,in=160] (1.8,7.6);
\draw [ultra thick] (2.2,7.4) to [out=-20,in=135] (4,6.25) to [out=-45,in=110] (5.9,3.2);
\draw [ultra thick] (6.1,2.8) to [out=-70,in=70] (6,1) to [out=-110,in=0] (4,-1.2) to [out=180,in=-70] (2.1,.8);

\draw [<-,thick] (-2,2) arc [radius=2,start angle=180,end angle=240];

\draw [fill=blue,blue] (18, 0) circle [radius=0.6];
\draw [fill=blue,blue] (18, 4) circle [radius=0.6];
\draw [fill=blue,blue] (14, 6.5) circle [radius=0.6];
\draw [fill=blue,blue] (22, 6.5) circle [radius=0.6];
\draw [fill=green,green] (14, 2) circle [radius=0.6];
\draw [fill=green,green] (22, 2) circle [radius=0.6];
\draw [fill=green,green] (18, 8.5) circle [radius=0.6];
\draw [fill] (13,1.5) circle [radius=.3];

\node at (14,2) {{\tiny{$1$}}};
\node at (22,2) {{\tiny{$2$}}};
\node at (18,8.5) {{\tiny{$0$}}};

\begin{scope}[shift={(14,0)}]
\draw [lightgray,ultra thick] (13,1.5) to [out=120,in=-135] (14,4.25) to [out=45,in=250] (16,7.5) to [out=70,in=180] (18,9.7);

\draw [ultra thick, red] (18,8.5) -- (14,6.5);
\draw [line width=5, red] (14,6.5) -- (14,2);
\draw [fill=blue,blue] (18, 0) circle [radius=0.6];
\draw [fill=blue,blue] (18, 4) circle [radius=0.6];
\draw [fill=blue,blue] (14, 6.5) circle [radius=0.6];
\draw [fill=blue,blue] (22, 6.5) circle [radius=0.6];
\draw [fill=green,green] (14, 2) circle [radius=0.6];
\draw [fill=green,green] (22, 2) circle [radius=0.6];
\draw [fill=green,green] (18, 8.5) circle [radius=0.6];
\draw [fill] (13,1.5) circle [radius=.3];

\node at (14,2) {{\tiny{$1$}}};
\node at (22,2) {{\tiny{$2$}}};
\node at (18,8.5) {{\tiny{$0$}}};
\end{scope}

\begin{scope}[shift={(28,0)}]
\draw [lightgray,ultra thick] (13,1.5) to [out=120,in=-135] (14,4.25) to [out=45,in=250] (16,7.5) to [out=70,in=180] (18,9.7) to [out=0,in=120] (19.2,9) to [out=-60,in=20] (18.8,7.3);
\draw [ultra thick,red] (18,8.5) -- (14,6.5);
\draw [ultra thick,red] (14,6.5) -- (14,2);
\draw [dashed,line width=3,red] (18,8.5) -- (22,6.5);
\draw [fill=blue,blue] (18, 0) circle [radius=0.6];
\draw [fill=blue,blue] (18, 4) circle [radius=0.6];
\draw [fill=blue,blue] (14, 6.5) circle [radius=0.6];
\draw [fill=blue,blue] (22, 6.5) circle [radius=0.6];
\draw [fill=green,green] (14, 2) circle [radius=0.6];
\draw [fill=green,green] (22, 2) circle [radius=0.6];
\draw [fill=green,green] (18, 8.5) circle [radius=0.6];
\draw [fill] (13,1.5) circle [radius=.3];

\node at (14,2) {{\tiny{$1$}}};
\node at (22,2) {{\tiny{$2$}}};
\node at (18,8.5) {{\tiny{$0$}}};
\end{scope}

\begin{scope}[shift={(-14,-13)}]
\draw [lightgray,ultra thick] (13,1.5) to [out=120,in=-135] (14,4.25) to [out=45,in=250] (16,7.5) to [out=70,in=180] (18,9.7) to [out=0,in=120] (19.2,9) to [out=-60,in=20] (18.8,7.3) to [out=200,in=-10] (17.2,7.2);
\draw [ultra thick,red] (18,8.5) -- (14,6.5);
\draw [ultra thick,red] (14,6.5) -- (14,2);
\draw [dashed,thick,red] (18,8.5) -- (22,6.5);
\draw [dashed,line width=3,red] (18,8.5) -- (18,4);
\draw [fill=blue,blue] (18, 0) circle [radius=0.6];
\draw [fill=blue,blue] (18, 4) circle [radius=0.6];
\draw [fill=blue,blue] (14, 6.5) circle [radius=0.6];
\draw [fill=blue,blue] (22, 6.5) circle [radius=0.6];
\draw [fill=green,green] (14, 2) circle [radius=0.6];
\draw [fill=green,green] (22, 2) circle [radius=0.6];
\draw [fill=green,green] (18, 8.5) circle [radius=0.6];
\draw [fill] (13,1.5) circle [radius=.3];

\node at (14,2) {{\tiny{$1$}}};
\node at (22,2) {{\tiny{$2$}}};
\node at (18,8.5) {{\tiny{$0$}}};
\end{scope}

\begin{scope}[shift={(0,-13)}]
\draw [lightgray,ultra thick] (13,1.5) to [out=120,in=-135] (14,4.25) to [out=45,in=250] (16,7.5) to [out=70,in=180] (18,9.7) to [out=0,in=120] (19.2,9) to [out=-60,in=20] (18.8,7.3) to [out=200,in=-10] (17.2,7.2) to [out=170,in=-20] (16,7.5) to [out=160,in=60] (13,7) to [out=-120,in=135] (14,4.25) to [out=-45,in=150] (14.8,3.6);
\draw [line width=5,red] (18,8.5) -- (14,6.5);
\draw [ultra thick,red] (14,6.5) -- (14,2);
\draw [dashed,thick,red] (18,8.5) -- (22,6.5);
\draw [dashed,thick,red] (18,8.5) -- (18,4);
\draw [fill=blue,blue] (18, 0) circle [radius=0.6];
\draw [fill=blue,blue] (18, 4) circle [radius=0.6];
\draw [fill=blue,blue] (14, 6.5) circle [radius=0.6];
\draw [fill=blue,blue] (22, 6.5) circle [radius=0.6];
\draw [fill=green,green] (14, 2) circle [radius=0.6];
\draw [fill=green,green] (22, 2) circle [radius=0.6];
\draw [fill=green,green] (18, 8.5) circle [radius=0.6];
\draw [fill] (13,1.5) circle [radius=.3];

\node at (14,2) {{\tiny{$1$}}};
\node at (22,2) {{\tiny{$2$}}};
\node at (18,8.5) {{\tiny{$0$}}};
\end{scope}

\begin{scope}[shift={(14,-13)}]
\draw [lightgray,ultra thick] (13,1.5) to [out=120,in=-135] (14,4.25) to [out=45,in=250] (16,7.5) to [out=70,in=180] (18,9.7) to [out=0,in=120] (19.2,9) to [out=-60,in=20] (18.8,7.3) to [out=200,in=-10] (17.2,7.2) to [out=170,in=-20] (16,7.5) to [out=160,in=60] (13,7) to [out=-120,in=135] (14,4.25) to [out=-45,in=150] (14.8,3.6) to [out=-30,in=90] (15.8,2);
\draw [ultra thick,red] (18,8.5) -- (14,6.5);
\draw [ultra thick,red] (14,6.5) -- (14,2);
\draw [dashed,thick,red] (18,8.5) -- (22,6.5);
\draw [dashed,thick,red] (18,8.5) -- (18,4);
\draw [dashed,line width=3,red] (14,2) -- (18,4);
\draw [fill=blue,blue] (18, 0) circle [radius=0.6];
\draw [fill=blue,blue] (18, 4) circle [radius=0.6];
\draw [fill=blue,blue] (14, 6.5) circle [radius=0.6];
\draw [fill=blue,blue] (22, 6.5) circle [radius=0.6];
\draw [fill=green,green] (14, 2) circle [radius=0.6];
\draw [fill=green,green] (22, 2) circle [radius=0.6];
\draw [fill=green,green] (18, 8.5) circle [radius=0.6];
\draw [fill] (13,1.5) circle [radius=.3];

\node at (14,2) {{\tiny{$1$}}};
\node at (22,2) {{\tiny{$2$}}};
\node at (18,8.5) {{\tiny{$0$}}};
\end{scope}

\begin{scope}[shift={(28,-13)}]
\draw [lightgray,ultra thick] (13,1.5) to [out=120,in=-135] (14,4.25) to [out=45,in=250] (16,7.5) to [out=70,in=180] (18,9.7) to [out=0,in=120] (19.2,9) to [out=-60,in=20] (18.8,7.3) to [out=200,in=-10] (17.2,7.2) to [out=170,in=-20] (16,7.5) to [out=160,in=60] (13,7) to [out=-120,in=135] (14,4.25) to [out=-45,in=150] (14.8,3.6) to [out=-30,in=90] (15.8,2) to [out=-90,in=110] (16,1) to [out=-70,in=180] (18,-1.2) to [out=0,in=-110] (20,1) to [out=70,in=-90] (20.2,2);
\draw [ultra thick,red] (18,8.5) -- (14,6.5);
\draw [ultra thick,red] (14,6.5) -- (14,2);
\draw [dashed,thick,red] (18,8.5) -- (22,6.5);
\draw [dashed,thick,red] (18,8.5) -- (18,4);
\draw [dashed,thick,red] (14,2) -- (18,4);
\draw [line width=5,red] (14,2) -- (18,0);
\draw [ultra thick,red] (18,0) -- (22,2);
\draw [fill=blue,blue] (18, 0) circle [radius=0.6];
\draw [fill=blue,blue] (18, 4) circle [radius=0.6];
\draw [fill=blue,blue] (14, 6.5) circle [radius=0.6];
\draw [fill=blue,blue] (22, 6.5) circle [radius=0.6];
\draw [fill=green,green] (14, 2) circle [radius=0.6];
\draw [fill=green,green] (22, 2) circle [radius=0.6];
\draw [fill=green,green] (18, 8.5) circle [radius=0.6];
\draw [fill] (13,1.5) circle [radius=.3];

\node at (14,2) {{\tiny{$1$}}};
\node at (22,2) {{\tiny{$2$}}};
\node at (18,8.5) {{\tiny{$0$}}};
\end{scope}

\begin{scope}[shift={(-14,-26)}]
\draw [lightgray,ultra thick] (13,1.5) to [out=120,in=-135] (14,4.25) to [out=45,in=250] (16,7.5) to [out=70,in=180] (18,9.7) to [out=0,in=120] (19.2,9) to [out=-60,in=20] (18.8,7.3) to [out=200,in=-10] (17.2,7.2) to [out=170,in=-20] (16,7.5) to [out=160,in=60] (13,7) to [out=-120,in=135] (14,4.25) to [out=-45,in=150] (14.8,3.6) to [out=-30,in=90] (15.8,2) to [out=-90,in=110] (16,1) to [out=-70,in=180] (18,-1.2) to [out=0,in=-110] (20,1) to [out=70,in=-90] (20.2,2) to [out=90,in=-70] (20,3) to [out=110,in=0] (18,5.2) to [out=180,in=120] (16.8,3.4) to [out=-60,in=200] (20,3) to [out=20,in=210] (21.2,3.6);
\draw [ultra thick,red] (18,8.5) -- (14,6.5);
\draw [ultra thick,red] (14,6.5) -- (14,2);
\draw [dashed,thick,red] (18,8.5) -- (22,6.5);
\draw [dashed,thick,red] (18,8.5) -- (18,4);
\draw [dashed,thick,red] (14,2) -- (18,4);
\draw [ultra thick,red] (14,2) -- (18,0);
\draw [ultra thick,red] (18,0) -- (22,2);
\draw [line width=5,red] (22,2) -- (18,4);
\draw [fill=blue,blue] (18, 0) circle [radius=0.6];
\draw [fill=blue,blue] (18, 4) circle [radius=0.6];
\draw [fill=blue,blue] (14, 6.5) circle [radius=0.6];
\draw [fill=blue,blue] (22, 6.5) circle [radius=0.6];
\draw [fill=green,green] (14, 2) circle [radius=0.6];
\draw [fill=green,green] (22, 2) circle [radius=0.6];
\draw [fill=green,green] (18, 8.5) circle [radius=0.6];
\draw [fill] (13,1.5) circle [radius=.3];

\node at (14,2) {{\tiny{$1$}}};
\node at (22,2) {{\tiny{$2$}}};
\node at (18,8.5) {{\tiny{$0$}}};
\end{scope}

\begin{scope}[shift={(0,-26)}]
\draw [lightgray,ultra thick] (13,1.5) to [out=120,in=-135] (14,4.25) to [out=45,in=250] (16,7.5) to [out=70,in=180] (18,9.7) to [out=0,in=120] (19.2,9) to [out=-60,in=20] (18.8,7.3) to [out=200,in=-10] (17.2,7.2) to [out=170,in=-20] (16,7.5) to [out=160,in=60] (13,7) to [out=-120,in=135] (14,4.25) to [out=-45,in=150] (14.8,3.6) to [out=-30,in=90] (15.8,2) to [out=-90,in=110] (16,1) to [out=-70,in=180] (18,-1.2) to [out=0,in=-110] (20,1) to [out=70,in=-90] (20.2,2) to [out=90,in=-70] (20,3) to [out=110,in=0] (18,5.2) to [out=180,in=120] (16.8,3.4) to [out=-60,in=200] (20,3) to [out=20,in=210] (21.2,3.6) to [out=30,in=225] (22,4.25) to [out=45,in=-60] (23.2,7.1) to [out=120,in=60] (20.8,7.1) to [out=-120,in=135] (22,4.25) to [out=-45,in=60] (23,1.5);
\draw [ultra thick,red] (18,8.5) -- (14,6.5);
\draw [ultra thick,red] (14,6.5) -- (14,2);
\draw [dashed,thick,red] (18,8.5) -- (22,6.5);
\draw [dashed,thick,red] (18,8.5) -- (18,4);
\draw [dashed,thick,red] (14,2) -- (18,4);
\draw [ultra thick,red] (14,2) -- (18,0);
\draw [ultra thick,red] (18,0) -- (22,2);
\draw [ultra thick,red] (22,2) -- (18,4);
\draw [line width=5,red] (22,2) -- (22,6.5);
\draw [fill=blue,blue] (18, 0) circle [radius=0.6];
\draw [fill=blue,blue] (18, 4) circle [radius=0.6];
\draw [fill=blue,blue] (14, 6.5) circle [radius=0.6];
\draw [fill=blue,blue] (22, 6.5) circle [radius=0.6];
\draw [fill=green,green] (14, 2) circle [radius=0.6];
\draw [fill=green,green] (22, 2) circle [radius=0.6];
\draw [fill=green,green] (18, 8.5) circle [radius=0.6];
\draw [fill] (13,1.5) circle [radius=.3];

\node at (14,2) {{\tiny{$1$}}};
\node at (22,2) {{\tiny{$2$}}};
\node at (18,8.5) {{\tiny{$0$}}};
\end{scope}

\begin{scope}[shift={(14,-26)}]
\draw [lightgray,ultra thick] (13,1.5) to [out=120,in=-135] (14,4.25) to [out=45,in=250] (16,7.5) to [out=70,in=180] (18,9.7) to [out=0,in=120] (19.2,9) to [out=-60,in=20] (18.8,7.3) to [out=200,in=-10] (17.2,7.2) to [out=170,in=-20] (16,7.5) to [out=160,in=60] (13,7) to [out=-120,in=135] (14,4.25) to [out=-45,in=150] (14.8,3.6) to [out=-30,in=90] (15.8,2) to [out=-90,in=110] (16,1) to [out=-70,in=180] (18,-1.2) to [out=0,in=-110] (20,1) to [out=70,in=-90] (20.2,2) to [out=90,in=-70] (20,3) to [out=110,in=0] (18,5.2) to [out=180,in=120] (16.8,3.4) to [out=-60,in=200] (20,3) to [out=20,in=210] (21.2,3.6) to [out=30,in=225] (22,4.25) to [out=45,in=-60] (23.2,7.1) to [out=120,in=60] (20.8,7.1) to [out=-120,in=135] (22,4.25) to [out=-45,in=60] (23,1.5) to [out=-120,in=-20] (20,1) to [out=160,in=20] (16,1) to [out=200,in=-60] (13,1.5);
\draw [ultra thick,red] (18,8.5) -- (14,6.5);
\draw [ultra thick,red] (14,6.5) -- (14,2);
\draw [dashed,thick,red] (18,8.5) -- (22,6.5);
\draw [dashed,thick,red] (18,8.5) -- (18,4);
\draw [dashed,thick,red] (14,2) -- (18,4);
\draw [ultra thick,red] (14,2) -- (18,0);
\draw [line width=5,red] (18,0) -- (22,2);
\draw [ultra thick,red] (22,2) -- (18,4);
\draw [ultra thick,red] (22,2) -- (22,6.5);
\draw [fill=blue,blue] (18, 0) circle [radius=0.6];
\draw [fill=blue,blue] (18, 4) circle [radius=0.6];
\draw [fill=blue,blue] (14, 6.5) circle [radius=0.6];
\draw [fill=blue,blue] (22, 6.5) circle [radius=0.6];
\draw [fill=green,green] (14, 2) circle [radius=0.6];
\draw [fill=green,green] (22, 2) circle [radius=0.6];
\draw [fill=green,green] (18, 8.5) circle [radius=0.6];
\draw [fill] (13,1.5) circle [radius=.3];

\node at (14,2) {{\tiny{$1$}}};
\node at (22,2) {{\tiny{$2$}}};
\node at (18,8.5) {{\tiny{$0$}}};
\end{scope}

\begin{scope}[shift={(42,-26)}]
\draw [ultra thick,red] (4,0) -- (8,2);
\draw [ultra thick,red] (8,2) -- (8,6.5);
\draw [ultra thick,red] (0,6.5) -- (0,2);
\draw [ultra thick,red] (0,2) -- (4,4);
\draw [ultra thick,red] (4,4) -- (4,8.5);
\draw [ultra thick,red] (4,4) -- (8,2);

\draw [fill=blue,blue] (4, 0) circle [radius=0.4];
\draw [fill=blue,blue] (4, 4) circle [radius=0.4];
\draw [fill=blue,blue] (0, 6.5) circle [radius=0.4];
\draw [fill=blue,blue] (8, 6.5) circle [radius=0.4];
\draw [fill=green,green] (0, 2) circle [radius=0.4];
\draw [fill=green,green] (8, 2) circle [radius=0.4];
\draw [fill=green,green] (4, 8.5) circle [radius=0.4];
\end{scope}

\end{tikzpicture}
\caption{The (ht:$E$, cut:$E$) Bernardi process, run on a hypergraph (with ribbon structure) and the indicated hypertree. The last panel shows the outcome of the (ht:E, cut:V) 
process on the same hypertree.}
\label{fig:apapirrasokatir}
\end{figure}

\begin{ex}
\label{ex:apapirrasokatir}
Let $G$ be the plane bipartite graph shown in Figure \ref{fig:apapirrasokatir}, with the three emerald (green) nodes forming the color class $E$ and the four violet (blue) ones the color class $V$. We will refer to the elements of $E$ as top, left, and right. A ribbon structure is chosen so that at emerald nodes the cyclic order of the incident edges is clockwise, whereas at violet nodes it is counterclockwise. In the first panel of Figure \ref{fig:apapirrasokatir} we show (using a particular embedding in $3$-space) the associated ribbon surface that was described in Remark \ref{rem:surface}. Also indicated (in the lower left) is the base point; equivalently, our base node is the left emerald node and our base edge is the vertical one on the left.

The numbers $0,1,2$ written over the emerald nodes form a hypertree on $E$. In panels $2$--$11$ we show how the (ht:$E$, cut:$E$) Bernardi process operates with this input. At first there is just the hypertree and the process is at the base point. 

In the first step, which is probably the most interesting one in this example, the base edge is current. The hypertree calls for a realization (a tree) of degree $1+1=2$ at the base node, so one might expect that the first of the three incident edges will be removed. However if we did that, then in the remaining graph, all four edges of the bottom rhombus would be `wanted' by the hypertree: the left two to make the base node degree $2$, and the right two to make the right emerald node degree $2+1=3$. In other words, in the remaining graph it would not be possible to realize the given numbers as the degrees (minus $1$) of a spanning tree. Thus the Bernardi process will not remove the base edge, rather it will traverse it, which then will force it to traverse the upper left edge of $G$ as well.

In the next eight panels we show the remaining steps of the process. The current edge of each step is highlighted. The gray curves are included to help keep track of the cyclic orders of the ribbon structure, but they can also be understood as portions of a continuous path similar to the one we described in Remark \ref{rem:topological_interpretation_of_tour_of_a_tree}. The decisions are rather straightforward: removal, removal, traversal (in fact for the second time; note that if we did not traverse here, the top node would become isolated so it could no longer have degree $0+1=1$), removal, traversal (of the current edge and one more edge), traversal (here and in the next step, as the violet vertex we reach is already a leaf, the other edge we are forced to traverse coincides with the current edge), traversal, traversal. Note that each edge was current exactly once and that at the end, the subgraph of those edges that we did not remove coincides with the subgraph of traversed edges, and this subgraph is a spanning tree that realizes the given hypertree.

The last panel of Figure \ref{fig:apapirrasokatir} depicts the outcome of the (ht:$E$, cut:$V$) Bernardi process with the same ribbon structure and on the same input hypertree. It is again a spanning tree realization. We leave it to the interested reader to construct the steps leading to it, and to check that again there are nine such steps, with each edge becoming current exactly once. (Note that in this case the first current edge is not the base edge, rather it is the upper left one; the base edge becomes current in the second step.)
\end{ex}

We will often think of part \ref{harom} of the input, the hypertree, as a `variable,' so that the process itself is determined by \ref{egy} and \ref{ketto} only. In this sense we may in fact speak of four Bernardi processes on the ribbon bipartite graph $\bip\HH$, by applying the two definitions above to $\HH$ and to $\overline\HH$. In the latter case the hypertree is given on $V$ and we denote those two processes with (ht:$V$, cut:$E$) and (ht:$V$, cut:$V$).

Many more instances of the process can be generated by varying part \ref{ketto} of the input. See for example Lemma \ref{l:emerald_tree_is_also_violet} for the case when the ribbon structure is reversed (even though that process turns out not to be completely new).

\begin{remark}
\label{rem:altalanosit}
Both of our processes for hypergraphs do indeed generalize Bernardi's original process for graphs, and in fact, they can be thought of as a common generalization of the Bernardi process and its inverse, given by Baker and Wang \cite{Baker-Wang}. That is, for each spanning tree $T$ of the ribbon graph $G$, Bernardi's tour of $T$ is basically equivalent to our walk on $\bip G$, where the latter is defined using the uniquely extended ribbon structure as part \ref{ketto} of the input and the characteristic function of $T$ as part \ref{harom}. We sketch the main ideas of an induction proof. 

First note that a (current vertex, current edge) pair $(x,xy)$ in $G$ can be equivalently given as a `current half-edge' $xe$ between $x$ and the node $e$ placed at the center of $xy$. While $xe$ is a half-edge in $G$, it is an edge in $\bip G$. In particular, where Bernardi chose a base vertex $b_0$ and base edge $e_0=b_0b_1$, we may speak instead of the base node $b_0$ and the base edge $b_0e_0$.

If $xy$ is in the spanning tree $T$, then the corresponding hypertree has the value $1$ at $e$ and hence to realize it in $\bip G$, both $ex$ and $ey$ are necessary. Therefore both versions of our process will traverse both of those edges. In the first version, having arrived at $xe$ `near' $x$, we decide to keep it and then the ribbon structure at $e$ forces us to traverse $ey$, too. In the second version we have to traverse $xe$ anyway and then at $e$ we make the decision to keep $ey$ as well. After this we continue with the (half-)edge that follows $ey$ in the cyclic order at $y$. In both cases this is exactly what the original process would have done, too. 

If $xy$ is not in $T$, that is when the hypertree takes the value $0$ at $e$, we consider two sub-cases. If $ey$ has not been cut thus far, then the first version of our process, upon arrival at $xe$, will cut $xe$ because it is not necessary for a realization of the hypertree. The second version will traverse $xe$ and then cut $ey$, which forces it to backtrack to $x$. Else if $ey$ is not in the graph any more, then the first version will decide to keep $xe$ so that $e$ does not become an isolated node; the second version will have to traverse $xe$ anyway but in both versions, since $e$ was already a leaf, the process will bounce back to $x$ and continue with the edge that follows $xe$ in the cyclic order at $x$. This again matches the behavior of the original Bernardi process.

When applied to a graph $G$, our versions of the Bernardi process do not only trace a given spanning tree $T$, they also select one of the two half-edges of each non-edge $\varepsilon$ of $T$ so as to enlarge $T$ into a spanning tree of $\bip G$. As is clear from the above, the difference between the two generalizations is whether this half-edge is opposite to (first version) or on the same side as (second version) the first endpoint of $\varepsilon$ that Bernardi's tour of $T$ visits.
\end{remark}

The main theorem of this section draws identical conclusions for the processes of Definitions \ref{d:Bernardi,em,vi} and \ref{d:Bernardi,vi,vi}. It is convenient to state and prove it for the process of Definition \ref{d:Bernardi,em,vi} applied to $\HH$, coupled with the process of Definition \ref{d:Bernardi,vi,vi} applied to $\overline\HH$, so that in both cases we are allowed to cut edges at their violet endpoints. We do not lose any generality by this because $\HH=\overline{(\overline\HH)}$.

\begin{thm}
\label{thm:Bernardi_well_def}
For any hypertree $\mathbf f$ on $E$ (respectively, on $V$), the (ht:$E$, cut:$V$) Bernardi process (respectively, the (ht:$V$, cut:$V$) Bernardi process) takes each edge of $\bip \HH$ exactly once as current edge. The current graph at the end of the process is a spanning tree of $\bip \HH$ realizing $\mathbf f$.
\end{thm}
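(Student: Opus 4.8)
The plan is to run the process and prove everything by a single induction on the number of steps performed, tracking a bundle of invariants. Since $\HH=\overline{(\overline\HH)}$, both halves of the statement reduce to the (ht:$E$, cut:$V$) process, in which --- as one checks from Definition~\ref{d:Bernardi,em,vi} by inspecting the start, a deletion step, and a traversal step --- the current edge is always approached from its violet endpoint. So let $G_t$ be the current graph after $t$ steps, let $(v_t,\varepsilon_t)$ be the current flag (an edge of $G_t$ together with its violet endpoint), and let $\mathrm{Tr}_t$, $K_t$, $R_t$ be the sets of edges traversed, kept, respectively removed so far. The invariants I would maintain are: (i) no edge has served as current edge twice, $K_t\cap R_t=\emptyset$, and $K_t\cup R_t$ is exactly the set of edges already examined as current; (ii) $G_t$ is connected and $\mathbf f$ is a hypertree on its emerald nodes (both automatic from the deletion rule, since realizability of $\mathbf f$ in $G_t$ forces connectivity); (iii) every spanning tree of $G_t$ realizing $\mathbf f$ contains all of $\mathrm{Tr}_t$, so that traversed edges can never later be deleted --- this is essentially Lemma~\ref{l:traversed_edge_is_not_deleted}; (iv) $\mathrm{Tr}_t$ is a tree (or empty at the very start) containing $v_t$; and (v) the pair $(v_t,\varepsilon_t)$ sits on the boundary of $\mathrm{Tr}_t$ in the position dictated by Bernardi's tour, so that as long as no edge is deleted the walk reproduces Bernardi's tour of $\mathrm{Tr}_t$, and in fact the walk so far coincides with an initial segment of Bernardi's tour of some spanning tree $S_t\supseteq\mathrm{Tr}_t$ of $G_t$, with every edge of $R_t$ being one that that tour would have skipped before reaching $(v_t,\varepsilon_t)$.

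The inductive step splits into two cases. A \emph{deletion} of $\varepsilon_t$ is the easy one: $\mathrm{Tr}$ is unchanged, the deleted edge is by (iii) not forced so (ii)--(iii) survive, and because the cyclic order of $G_t-\varepsilon_t$ is the restriction of that of $G_t$ the walk's trajectory and the tour picture in (v) are unaffected. The substantial case is a \emph{traversal}: $\varepsilon_t=v_te$ is kept, we cross to the emerald node $e$, the ribbon structure forces us to traverse $ew:=ev_t{}^+_{G_t}$ as well, and the next flag is $(w,we^+_{G_t})$. Here one has to verify: (a) adjoining the kept --- hence forced --- edge $v_te$ to $\mathrm{Tr}_t$ does not create a cycle; this holds because forced edges cannot close a cycle inside a graph in which $\mathbf f$ is still realizable, so in particular $e\notin\mathrm{Tr}_t$; (b) the forced companion $ew$ is itself forced in $G_t$, the violet node $w$ is not yet in $\mathrm{Tr}_t$, and $ew$ has not been examined before, so that $\mathrm{Tr}_{t+1}=\mathrm{Tr}_t\cup\{v_te,ew\}$ is again a tree obeying (iii) and (i) persists; and (c) the new flag occupies the correct boundary position of $\mathrm{Tr}_{t+1}$. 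I expect step (b) to be the main obstacle: it has no analogue in Bernardi's graph setting, where the emerald node $e$ has degree $2$ and hence ``the other edge at $e$'' is unambiguous and automatically a tree edge, whereas for a general hyperedge one must combine the combinatorics of forced edges at $e$ (how the realizability constraint on $\mathbf f$ propagates among the edges incident to $e$) with the local ribbon geometry and the inductive tour picture (v) to see that the forced continuation $ew$ reaches a genuinely new violet node via an as-yet-unexamined edge.

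Granting the invariants, the rest is routine. Between consecutive deletions the walk is literally Bernardi's tour of a fixed tree, hence finite and passing through each flag once; and every deletion strictly shrinks the finite graph $\bip\HH$; so the process terminates, and by (v) it does so precisely when the stopping rule fires, after making every violet flag current exactly once. In particular every edge of $\bip\HH$ has been examined, so by (i) its edge set is the disjoint union $K\sqcup R$, and $K=\mathrm{Tr}$ (every kept edge is traversed from its violet side, and by (b) every traversed edge is eventually examined and kept). Thus the final current graph is $\mathrm{Tr}=G_{\mathrm{end}}$, which by (iv) is a tree; since each deletion lowers the first Betti number by one and $G_{\mathrm{end}}$ is connected (invariant (ii)) on the full node set $V\cup E$, it is a spanning tree of $\bip\HH$ with $|V|+|E|-1$ edges. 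Finally, by (ii) the tree $G_{\mathrm{end}}$ admits a spanning tree realizing $\mathbf f$, and being a tree it is its own spanning tree, so $G_{\mathrm{end}}$ itself realizes $\mathbf f$, which is exactly the claim.
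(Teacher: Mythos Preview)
Your overall architecture---maintain a tree of traversed edges, show it never acquires a cycle, and show it eventually exhausts the graph---is exactly right, and it matches the paper's strategy. The gap is in invariant~(iii). You assert that \emph{every} spanning tree of $G_t$ realizing $\mathbf f$ contains all of $\mathrm{Tr}_t$, and you lean on this to argue in~(b) that the forced companion $ew=ev_t^{+}_{G_t}$ is itself forced and hence that $w$ is a new node. But~(iii) is simply false. Take $E=\{e_1,e_2\}$, $V=\{v_1,v_2,v_3\}$, with $e_1$ adjacent to all of $V$ and $e_2$ adjacent to $v_2,v_3$; let $\mathbf f=(1,1)$, start the (ht:$E$, cut:$V$) process at $b_0=v_1$ with base edge $v_1e_1$, and use the cyclic order $e_1v_1,e_1v_2,e_1v_3$ at~$e_1$. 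The first current edge $v_1e_1$ is kept (deleting it isolates $v_1$), and the process then traverses $e_1v_2$; yet $\{e_1v_1,e_1v_3,e_2v_2,e_2v_3\}$ is a realizing tree in $G_0=\bip\HH$ that does \emph{not} contain $e_1v_2$. So the traversed edge $e_1v_2$ is not forced at the moment of traversal, (iii) fails, and with it your deduction of~(a) and~(b). You also identify~(iii) with Lemma~\ref{l:traversed_edge_is_not_deleted}, but that lemma only says traversed edges are never \emph{deleted}; it does not say they lie in every realizing tree, and its proof is a connectivity argument that already presupposes the no-cycle lemma.

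The paper closes this gap by a genuinely global argument (Lemma~\ref{l:no_cycl_in_Bernardi}): assuming a first cycle $O$ appears among traversed edges, it orients $O$, takes a realizing tree $T$ at that moment, builds an auxiliary digraph on the indices of $O$ recording which kept edges $v_ie_i$ lie in the base cycles $C(T,e_jv_{j+1})$, extracts a minimal directed cycle in that digraph, and performs a simultaneous matroid exchange to produce a realizing tree avoiding the \emph{first} kept edge among those indices---contradicting that it was kept. Only after this is Lemma~\ref{l:traversed_edge_is_not_deleted} proved. Your invariant package could be salvaged if you replaced~(iii) by the weaker ``$\mathrm{Tr}_t$ is contained in \emph{some} realizing tree $S_t$ of $G_t$ whose tour agrees with the walk so far'' (roughly your~(v)), but then the burden in the traversal step is to show that when $ew\notin S_t$ one can switch to another realizing tree $S_{t+1}\ni ew$ with the same initial tour---and establishing that such a switch is always available is essentially the content of Lemma~\ref{l:no_cycl_in_Bernardi}.
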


This theorem generalizes Lemma \ref{l:T-tour_cyclic_perm}, cf.\ Remark \ref{rem:altalanosit}. In Section \ref{sec:examples} we will see that it also generalizes \cite[Theorem 10.1]{hiperTutte}. 
It is no wonder then that the proof is somewhat lengthy\footnote{The referee assigned by Algebraic Combinatorics kindly suggested that a proof by induction on the number of edges of $\bip \HH$ might work, too. We agree, but since it does not seem significantly shorter, we will not pursue it here. On the other hand, late in the publication process we found yet another, indeed more brief argument for Theorem \ref{thm:Bernardi_well_def}. Instead of realizing hypertrees, it is based on ``realizing'' arbitrary points of the root polytope. It will be included in a future publication.}. The key will be Lemma \ref{l:no_cycl_in_Bernardi}, in particular the construction of the tree $T'$ after Claim \ref{p:each_edge_in_base_cut}, aided by Figure \ref{fig:T-walk}.

By a \emph{violet Bernardi run} we mean a running of either the (ht:$E$, cut:$V$) Bernardi process or the (ht:$V$, cut:$V$) Bernardi process on some hypertree.

\begin{lemma}
\label{c:hedges_traversed_in_good_order}
In a violet Bernardi run, if until some moment there is no cycle in the subgraph of traversed edges, then until that moment, 
\begin{enumerate}[label=(\roman*)]
\item\label{mindketszin} at any node $x\in V\cup E$, the edges of the current graph $G$ incident to $x$ that were traversed from the direction of $x$ are consecutive edges in the cyclic order at $x$ (in $G$), covering less than one full turn, and were traversed in a consecutive order (edges not incident to $x$ may also have been traversed between the times of these traversals).
\item\label{csaklila} at any $x\in V$, those edges of $\bip \HH$ incident to $x$ that have already been current edges, became current edges in an order compatible with the cyclic order at $x$ (in $\bip \HH$), covering less than one full turn.
\end{enumerate}
\end{lemma}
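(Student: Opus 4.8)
The plan is to prove the two statements \ref{mindketszin} and \ref{csaklila} simultaneously by induction on the number of steps of the violet Bernardi run, under the standing hypothesis that no cycle has yet appeared among the traversed edges. I would set up the induction so that at each step I know both invariants hold for all earlier stages, and I track how a single step — either a deletion or a traversal — affects them.

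First I would analyze a deletion step. If the current edge $ve$ is deleted (cut at its violet node $v$), then the current graph changes from $G$ to $G-ve$, and the next current edge is $ve_G^+$ at the \emph{same} node $v$. So the ``being current at $v$'' list in \ref{csaklila} simply gains $ve$'s successor in the cyclic order at $v$; since we moved from $ve$ to $ve_G^+$, which is the next edge still present at $v$, the consecutiveness in \ref{csaklila} is preserved, as is the ``less than one full turn'' condition (provided we have not yet returned to where we started at $v$, which is exactly the termination condition ruling out a second traversal/exam). For \ref{mindketszin} I need to check that deleting $ve$ does not disturb the consecutive-block structure of traversed edges at any node: at $v$, the edge $ve$ was never traversed from $v$ (it is being cut, not traversed), so no block at $v$ loses an element; at $e$, similarly $ve$ was not traversed from $e$ in this step; and for the \emph{ambient} change caused by removing an edge from a ribbon graph, the restriction-of-cyclic-orders definition (given in the Preliminaries) merges the two neighbors of $ve$ into adjacency, which can only keep consecutive blocks consecutive. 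The subtle point is that an edge traversed from $x$ in an \emph{earlier} current graph must still be in the current graph now — this is where I would invoke that traversed edges are never deleted, i.e.\ Lemma \ref{l:traversed_edge_is_not_deleted}, or prove the needed special case inline; the no-cycle hypothesis is what makes that lemma applicable.

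Next I would analyze a traversal step. Here $ve$ is traversed from the violet direction and, setting $ew = ev_G^+$, the edge $ew$ is traversed from the emerald direction, and the next current edge is $we_G^+$. For \ref{mindketszin}: at $v$, the newly traversed-from-$v$ edge is $ve$; I must show it extends (on the correct side) the existing consecutive block of edges traversed from $v$, and that it was traversed ``consecutively'' with them, i.e.\ no other traversal-from-$v$ happened in between. This follows from \ref{csaklila} applied to $v$ at the previous stage: the edges that were current at $v$ form a cyclic-consecutive block, and among the current edges at $v$ the ones traversed-from-$v$ are exactly those that were current-and-kept, which by the walk's local rule (we always advance to the $(\cdot)^+$ successor) are consecutive and contiguous with $ve$. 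At $e$ the newly traversed-from-$e$ edge is $ew = ev_G^+$, the cyclic successor of $ve$ at $e$; I need that it abuts the existing traversed-from-$e$ block at $e$ — but in a violet run we never delete at emerald nodes, so every edge at $e$ persists, and the traversed-from-$e$ edges at $e$ appear in cyclic order starting right after the first edge by which the walk entered $e$; the new $ew$ is the immediate successor of the just-traversed $ve$, so it continues the block. Then I must check the block at $e$ still covers less than a full turn — equivalently, that $ew$ is genuinely a new traversal-from-$e$ and not a repeat, which again is guaranteed by the stopping rule. For \ref{csaklila}: the current-edge list at $v$ is unchanged by a traversal step except that $ve$ was current and we now leave $v$; and at $w$ (the other endpoint of $ew$), $we_G^+$ becomes current, continuing $w$'s cyclic-consecutive block by the same successor-advancing argument. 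The no-cycle hypothesis enters crucially here: it guarantees that the walk has not wrapped around to revisit a node-direction it already exhausted, so ``less than one full turn'' is genuinely maintained rather than silently exceeded.

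I expect the main obstacle to be the bookkeeping at emerald nodes in \ref{mindketszin} when the ambient current graph shrinks: I must be certain that a consecutive block of edges traversed-from-$e$ in an \emph{old} current graph $G'$ remains a consecutive block in the \emph{current} graph $G \subsetneq G'$. Because in a violet run no emerald-incident edge is ever cut, the cyclic order at each emerald node is literally unchanged throughout, so this difficulty evaporates for emerald nodes; the only nodes whose cyclic orders change are violet, and there the delicate case is precisely a deletion at $v$ while some edge incident to $v$ was traversed-from-$v$ earlier — and I have argued this cannot happen because the about-to-be-deleted edge would have to lie strictly inside the traversed-from-$v$ block, contradicting that the walk, after traversing that block, left $v$ and would re-examine its successor only at the very end. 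Making this last contradiction airtight — essentially, showing that once the walk leaves a violet node $v$, it returns to $v$ only to continue immediately past its block, never to cut an interior edge — is the technical heart, and it is exactly where the acyclicity of the traversed subgraph (the running hypothesis, later upgraded to Lemma \ref{l:no_cycl_in_Bernardi}) does the work of preventing the walk from looping back prematurely.
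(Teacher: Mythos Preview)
Your inductive framework is reasonable, but there are two genuine problems.

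First, you write that ``in a violet run no emerald-incident edge is ever cut, the cyclic order at each emerald node is literally unchanged throughout.'' This is false: in a bipartite graph every edge is emerald-incident, and edges certainly do get cut (at their violet endpoints). So the cyclic order at an emerald node $e$ \emph{does} shrink over time. The paper handles this not by claiming the order is fixed, but by observing that when the walk arrives at $e$ on $ve$ and leaves on $ev_{G'}^+$, any edges between them in the original cyclic order are already gone from $G'$; hence the two are consecutive in $G'$ and remain so in every later (smaller) current graph that still contains both. The no-cycle hypothesis then forces the next arrival at $e$ to come back along the same edge we left on, which is how the consecutive block grows.

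Second, your appeal to Lemma~\ref{l:traversed_edge_is_not_deleted} is circular: in the paper's logical order that lemma comes after Lemma~\ref{l:no_cycl_in_Bernardi}, which in turn uses the present lemma. More to the point, you do not actually need it. Statement~\ref{mindketszin} only concerns edges \emph{of the current graph} that were traversed from $x$; if a traversed edge has since been deleted it simply drops out of the block, and you must check that what remains is still consecutive. The paper sidesteps the issue entirely, and for~\ref{csaklila} instead proves the sharper fact that (until termination) the successor $ve_G^+$ in the current graph coincides with $ve_{\bip\HH}^+$ in the original graph---established by a short contradiction argument showing that if $ve_{\bip\HH}^+$ were already missing, every edge at $v$ would already have been current and the process would terminate. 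Your induction could recover this (if $ve_{\bip\HH}^+$ had been deleted it would have been current earlier, contradicting the inductive hypothesis on the order), but you should make that step explicit rather than leave it implicit in ``consecutiveness is preserved.''
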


\begin{proof}
If $x=e\in E$, then, having arrived at $e$ on an edge $ve$, the Bernardi process next traverses $ev_{G'}^+$ by definition. Here $G'$ is the current graph at the time of these traversals. If there are any edges of $\bip\HH$ incident to $e$ that fall between $ev$ and $ev_{G'}^+$ in the cyclic order (at $e$ in $\bip\HH$), then they have already been cut at their violet endpoints. Hence in any later current graph, if $ev$ and $ev_{G'}^+$ are still present in it, then they are still consecutive.

The next time the walk associated to the process arrives at $e$, it arrives on $ev^+_{G'}$ since otherwise it would have traversed a cycle. To get our conclusion we just have to repeat our argument and note that if the sequence of traversed edges covered a full turn, then some edge would be traversed twice from the direction of $e$, which would cause the process to stop.

For a vertex $x=v\in V$, it suffices to show \ref{csaklila} since it implies \ref{mindketszin}. If the current edge $ve$ is removed from the graph, the next current edge is $ve_G^+$ by definition. Here $G$ is the current graph when $ve$ is the current edge. On the other hand, if $ve$ is traversed, the next time the process arrives at $v$, it has to arrive on $ev$, otherwise it would traverse a cycle. But then the next current edge incident to $v$ is $ve_G^+$. In this case, the $G$ in $ve_G^+$ a priori refers to the current graph at the moment of traversing $ev$, but we can also take it, as before, to mean the current graph when $ve$ is the current edge. Indeed the edge $ve_G^+$ can only be cut at $v$, so it cannot be cut while the process is away from $v$.

We claim that if the Bernardi process does not terminate upon returning to $v$ along $ev$, then $ve_G^+=ve_{\bip \HH}^+$, in other words, $ve_{\bip \HH}^+$ is not yet removed from the graph. Take the first moment when for a current edge $ve$, the edge $ve_{\bip \HH}^+$ is already missing from the current graph. Until this moment, the edges incident to $v$ became current edges in an order compatible with the cyclic order at $v$ (in $\bip \HH$), and as $ve_{\bip \HH}^+$ is already removed from the graph, it has already been a current edge. Hence we conclude that all the edges incident to $v$ have already become current edges. This includes  $ve_G^+$. As $ve_G^+$ is still part of the graph, it was kept (and thus traversed) when it first became a current edge. Now when it becomes current edge for the second time, it will be kept once again. Therefore it will be traversed again. But that means that the Bernardi process terminates after traversing $ev$.
\end{proof}

\begin{lemma} 
\label{l:no_cycl_in_Bernardi}
During a violet Bernardi run, the subgraph of traversed edges of $\bip \HH$ never contains a cycle.
\end{lemma}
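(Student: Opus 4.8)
The plan is to argue by contradiction, with Lemma~\ref{c:hedges_traversed_in_good_order} as the workhorse. Suppose that during some violet Bernardi run the subgraph of traversed edges ever contains a cycle, and let $t$ be the first step at the end of which this happens; fix a cycle $C$ present at that moment. By minimality of $t$, the subgraph of traversed edges is acyclic at every instant strictly before the end of step~$t$, so the hypothesis of Lemma~\ref{c:hedges_traversed_in_good_order} is met there and conclusions~\ref{mindketszin} and \ref{csaklila} are at our disposal. Let $F$ be the subgraph of edges traversed strictly before step~$t$. It is connected, because the walk of the process is connected and each newly traversed edge is adjacent to the previous one, and it is acyclic by the choice of $t$; hence $F$ is a tree. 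One also checks, in the same spirit as Lemma~\ref{c:hedges_traversed_in_good_order}, that no traversed edge can have been removed yet: a kept edge lies in every spanning tree realizing $\mathbf f$ in the current graph and hence in all later current graphs, so it can never be cut; and a forced edge cannot be made a current edge (and so cannot be cut) after it is traversed, since that would require the walk to complete a full turn at its violet endpoint, contrary to \ref{csaklila}. Thus $F$ is a subtree of the current graph $G$.

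Next I would locate the edge that closes $C$. Treat the (ht:$E$, cut:$V$) run; the (ht:$V$, cut:$V$) run is identical after interchanging the two colors. At step~$t$ the process keeps the current edge, say $ve$ at a violet node $v$, traverses it from the violet direction, and is then forced to traverse $ew:=ev_G^+$ from the emerald direction. So $C$ arises either by adjoining $ve$ to $F$ or by adjoining $ew$ to the tree $F\cup\{ve\}$; in either case we adjoin to a tree an edge $\varepsilon=xy$ both of whose endpoints already lie in that tree, and $C$ is the resulting cycle.

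To finish I would pass to the ribbon surface of $G$. Because the tree in question has a disk as its regular neighborhood, Lemma~\ref{c:hedges_traversed_in_good_order} says precisely that the walk performed so far runs along the boundary circle of that disk, from the base point up to the corner at $x$ immediately preceding $\varepsilon$ in the cyclic order at $x$. Adjoining $\varepsilon$ attaches a band to this boundary circle along two arcs --- one near $x$, one near $y$ --- turning the neighborhood into an annulus and splitting the boundary circle into two. Continuing the walk one step past the traversal of $\varepsilon$ and following the rules of Definitions~\ref{d:Bernardi,em,vi}--\ref{d:Bernardi,vi,vi}, one sees that it is driven onto the part of the old boundary that it has \emph{already} traced (the part lying between the two attaching arcs, through which it passed when it first visited $y$); concretely, it would be forced to re-traverse, from the same direction, an edge it has already traversed from that direction, so by the stopping rule the process must have terminated strictly before arriving at the traversal in question --- equivalently, in the case $\varepsilon=ve$, one reads off that $\mathbf f$ is still a hypertree after deleting $ve$, so $ve$ would not have been kept. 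Either way we reach the desired contradiction.

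The hard part is this last step: turning ``the walk traces the boundary of the neighborhood'' into a rigorous statement, and in particular identifying which edge of $C$ is re-traversed and verifying that the stopping clause of Definitions~\ref{d:Bernardi,em,vi}--\ref{d:Bernardi,vi,vi} really fires there. This requires a careful and somewhat lengthy case analysis, organized by the two traversal directions at each node of $C$ and resting throughout on the consecutiveness statements \ref{mindketszin} and \ref{csaklila}; by comparison the reductions in the first two paragraphs (that $F$ is a tree, and the identification of the closing edge $\varepsilon$) are routine.
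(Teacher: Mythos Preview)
Your approach has a genuine gap at the decisive step. After correctly isolating the first moment $t$ at which a cycle $C$ appears and identifying the closing edge $\varepsilon$, you offer two routes to a contradiction: either the walk is ``driven onto the part of the old boundary that it has already traced,'' forcing a re-traversal and hence termination \emph{before} step $t$; or ``one reads off that $\mathbf f$ is still a hypertree after deleting $ve$,'' contradicting the decision to keep $ve$. Neither works as stated. For the first, a re-traversal forced by the annulus topology would occur \emph{after} step $t$, not before it; the stopping rule fires at the moment of the repeat, so nothing prevents step $t$ itself from happening. For the second, the ribbon-surface picture gives you a path in the current graph from $v$ to $e$ avoiding $ve$ (namely the rest of $C$), so $G-ve$ is connected, but connectedness is far from enough: you need a spanning tree of $G-ve$ that \emph{realizes the prescribed degree vector} $\mathbf f$ on $E$, and the topology says nothing about degrees. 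The sentence ``one reads off'' is precisely where the real content lies, and you have not supplied it.

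The paper's proof meets this head-on with a combinatorial/matroidal argument you do not attempt: it orients $C$, notes that the edges $v_ie_i$ traversed from the violet side are kept and hence lie in every realizing tree $T$, builds an auxiliary digraph recording which $v_ie_i$ fall in the fundamental cycle $C(T,e_jv_{j+1})$ of each non-tree forced edge, extracts a minimal directed cycle there, and uses a simultaneous matroid exchange along that cycle to produce a new spanning tree $T'$ that still realizes $\mathbf f$ but omits one of the kept edges $v_\ell e_\ell$ --- contradicting the fact that $v_\ell e_\ell$ was kept. This degree-preserving exchange is the heart of the matter and has no topological substitute in your outline.

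A smaller issue: your inline argument that no traversed edge has yet been removed is not correct as written. You claim a forced edge $ew$ could only become current after ``a full turn at its violet endpoint, contrary to \ref{csaklila},'' but \ref{csaklila} allows each edge at $w$ to be current exactly once; going once around from $we^+$ to $we$ is permitted. The paper proves this non-removal fact (Lemma~\ref{l:traversed_edge_is_not_deleted}) only \emph{after} the present lemma and using it; a version restricted to times before $t$ is provable, but it needs the connectivity argument given there, not the ``full turn'' shortcut.
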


\begin{proof}
Suppose that our violet Bernardi run uses the hypertree $\mathbf f$ as input (where $\mathbf f$ is a hypertree on $E$ or on $V$ depending on whether our process is of type (ht:$E$, cut:$V$) or (ht:$V$, cut:$V$)). Suppose for contradiction that after a while, a cycle appears in the subgraph of traversed edges of $\bip \HH$ and stop the process at the first moment when this occurs. Let $O$ be this cycle, and $G$ be the current graph at the moment. (Edges are typically traversed in pairs to form a $\text{violet}\to\text{emerald}\to\text{violet}$ path. If the first of the two edges completes a cycle, then we stop the process right there, midway through a step.)

In the rest of the proof we refer to this aborted process only, so that we may apply Lemma \ref{c:hedges_traversed_in_good_order} throughout. Note that $O$ is the only cycle in the subgraph of traversed edges, as the addition of an edge may create at most one cycle in a cycle-free graph.

We claim that for each edge of $O$, we can choose an orientation in which it was traversed, such that the chosen orientations give a cyclic orientation of $O$. Let us call this the \emph{positive orientation} of $O$. Indeed, if such an orientation did not exist, then there would be two edges $xy$ and $zw$ of $O$, such that $xy$ was only traversed from the direction of $x$ and $zw$ was only traversed from the direction of $w$, furthermore these two directions are opposite with respect to $O$. Suppose that $xy$ was first to be traversed among the two edges. Then after traversing $xy$ the walk associated to the process was at $y$. Later it needed to reach $w$ to be able to traverse $zw$ from the direction of $w$. But as neither $xy$ was traversed from the direction of $y$ nor $wz$ was traversed from the direction of $z$, the process could not go to $w$ using the edges of $O$, hence by the time the edges of $O$ are traversed, there needs to be another cycle in the subgraph of traversed edges, which is a contradiction.

Let us stress that by the time that all edges of $O$ have been traversed and the process is aborted, all edges of $O$ have been not just traversed but traversed in the positive direction.

\begin{center}
\begin{tikzpicture}[-,>=stealth',auto,scale=0.35,
                    thick,every node/.style={circle,draw,font=\sffamily\small}]
  \node[label=right:{\small $w$}] (2) at (3, -1) {};
  \node[label=right:{\small $z$}] (3) at (3, 1) {};
  \node[label=left:{\small $y$}] (6) at (-3, 1) {};
  \node[label=left:{\small $x$}] (7) at (-3, -1) {};
  \path[every node/.style={font=\sffamily\small}]
    (2) edge node {} (3)
    (3) edge [bend right=40] node {} (6)
    (7) edge [bend right=40] node {} (2)
    (6) edge node {} (7);
\end{tikzpicture}
\end{center}

Pick a violet node along $O$ and call it $v_0$. Then name the other nodes of the cycle $e_0, v_1, e_1, \dots, v_{t-1}, e_{t-1}$ consecutively in the positive direction, i.e., so that $v_ie_i\in O$ and $e_iv_{i+1}\in O$ for each $i$, and $v_ie_i$ was traversed from $v_i$ and $e_iv_{i+1}$ was traversed from $e_i$ for each $i$ (where indices are now understood modulo $t$). Since this is a violet Bernardi run, all the edges of the form $v_ie_i$ are kept.

We will need the following two technical claims.

\begin{claim}\label{c:Ber_cycle_cl0}
An edge $e_{i-1}v_{i}$ cannot be deleted before $v_ie_i$ becomes current edge.
\end{claim}

\begin{proof}
Suppose on the contrary that for some $i$, the edge $e_{i-1}v_{i}$ is deleted before $v_ie_i$ becomes current edge. Since $e_{i-1}v_{i}$ is traversed by the Bernardi process, it is traversed from the emerald direction before it gets deleted. At the time of the traversal of $e_{i-1}v_{i}$ from the emerald direction, $v_ie_{i-1}$ cannot have been current edge yet (because then it would have gotten deleted before traversed) and the next current edge is $v_{i}e_{i-1}^+$. By Lemma \ref{c:hedges_traversed_in_good_order}, while there is no cycle in the subgraph of traversed edges, the edges incident to a violet node become current edges in an order compatible with the cyclic order. Hence $v_{i}e_ {i}$ needs to become current edge before $v_{i}e_{i-1}$, which is a contradiction.
\end{proof}

\begin{claim}\label{c:Ber_cycle_cl1}
An edge $e_iv_{i+1}$ cannot be deleted before $v_ie_i$ becomes current edge.
\end{claim}

\begin{proof}
Suppose on the contrary that for some $i$, the edge $e_iv_{i+1}$ is deleted before $v_ie_i$ becomes current edge. Since $e_iv_{i+1}$ is traversed by the Bernardi process, it is traversed from its emerald node $e_i$ (i.e., in the positive direction) before it gets deleted. As $v_ie_i$ does not become current edge before $e_iv_{i+1}$ is deleted, it also does not get traversed in the positive direction until that time. Hence there must be an edge $xy$ in $O$ such that the edges along the arc between $v_{i+1}$ and $x$ are all traversed in the positive direction after the traversal and 
before the deletion of $e_iv_{i+1}$, but $xy$ is not. Let $zx$ denote the edge of $O$ preceding $xy$ in the positive direction. We claim that $xy$ cannot have been traversed from the direction of $x$ before the traversal of $e_iv_{i+1}$, either. Indeed, if that was the case, then by the time that all edges of $O$ between $v_{i+1}$ and $x$ are traversed, there would be a cycle in the graph of traversed edges: The set of edges that we traverse during this time is connected and cannot be a tree because the set includes $e_iv_{i+1}$, which is only traversed in one direction. Moreover, this cycle is different from $O$, since it appears in the subgraph of traversed edges before the traversal of $v_ie_i$ in the positive direction. This contradiction shows that $xy$ is not traversed in the positive direction before the deletion of $e_iv_{i+1}$. From here on we separate two cases.

Case 1: $xy=v_{i+1}e_{i+1}$ (hence $z=e_i$).
As $v_{i+1}e_{i+1}$ is not deleted during the process, if it is not traversed until $e_iv_{i+1}$ is removed, then it does not become a current edge until $v_{i+1}e_i$ becomes current edge. After the traversal of $e_iv_{i+1}$ from the emerald direction, the next current edge is $v_{i+1}e_i^+$. By Lemma \ref{c:hedges_traversed_in_good_order}, while there is no cycle in the subgraph of traversed edges, the edges incident to a violet node become current edges in an order compatible with the cyclic order. Hence $v_{i+1}e_{i+1}$ needs to become current edge before $v_{i+1}e_i$, which is a contradiction.

Case 2: $xy\neq v_{i+1}e_{i+1}$.
This means that the Bernardi process reaches $x$ through the edge $zx$ before it deletes $e_iv_{i+1}$. Hence after reaching $x$, the walk associated to the process needs to get back to $v_{i+1}$ with $v_{i+1}e_i$ as current edge before traversing $xy$ from the direction of $x$. If the process does not traverse $xz$ from the direction of $x$ before it returns to $v_{i+1}$, then necessarily it traverses a cycle, from which we get a contradiction with the fact that there is no cycle in the graph of traversed edges until we traverse all the edges of $O$ in the positive direction. On the other hand, Lemma \ref{c:hedges_traversed_in_good_order} tells us that until all the edges of $O$ are traversed, the edges adjacent to $x$ are traversed (from the direction of $x$) in an order compatible with the ribbon structure. Hence $xy$ must be traversed before $xz$, which is again a contradiction.
\end{proof}

\begin{figure}
\begin{center}
	\begin{tikzpicture}[-,>=stealth',auto,scale=0.7,
	thick]
	\tikzstyle{c}=[{circle,draw,font=\sffamily\small,minimum size=0.5cm}]
	\tikzstyle{o}=[circle,fill,draw]
	\tikzstyle{b}=[circle]
	\node[c,color=blue] (0) at (2, -0.8) {};
	\node[b] (00) at (2, -0.8) {\small $v_0$};
	\node[c,color=green] (1) at (2, 0.8) {};
	\node[b] (11) at (2, 0.8) {\small $e_0$};
	\node[c,color=blue] (2) at (0.8, 2) {};
	\node[b] (12) at (0.8, 2) {\small $v_1$};
	\node[c,color=green] (3) at (-0.8, 2) {};
	\node[b] (13) at (-0.8, 2) {\small $e_1$};
	\node[c,color=blue] (4) at (-2, 0.8) {};
	\node[b] (14) at (-2, 0.8) {\small $v_2$};
	\node[c,color=green] (5) at (-2, -0.8) {};
	\node[b] (15) at (-2, -0.8) {\small $e_2$};
	\node[c,color=blue] (6) at (-0.8, -2) {};
	\node[b] (16) at (-0.8, -2) {\small $v_3$};
	\node[c,color=green] (7) at (0.8, -2) {};
	\node[b] (17) at (0.8, -2) {\small $e_3$};
	\node[o,label=right:{\small $u_0$}] (8) at (1.6, 3) {};
	\node[o,label=left:{\small $u_1$}] (9) at (-2.5, -1.8) {};
	\node[o,label=below:{\small $u_2$}] (10) at (-1.8, -2.5) {};
	\path[every node/.style={font=\sffamily\small}]
	(1) edge [dashed] node {} (2)
	(3) edge [dashed] node {} (4)
	(5) edge [dashed] node {} (6);
	\path[every node/.style={font=\sffamily\small}, line width=0.7mm]
	(0) edge [] node {} (1)
	(2) edge node {} (3)
	(4) edge node {} (5)
	(6) edge node {} (7)
	(7) edge node {} (0)
	(5) edge [bend right=10] node {} (9)
	(9) edge [bend right=10] node {} (10)
	(10) edge [bend right=10] node {} (6)
	(1) edge [bend right=20] node {} (8)
	(8) edge [bend right=20] node {} (3);
	\end{tikzpicture}
	\hspace{0.8cm}
	\begin{tikzpicture}[->,>=stealth',auto,scale=1.5,
	thick]
	\tikzstyle{c}=[{circle,draw,font=\sffamily\small}]
	\tikzstyle{o}=[circle,fill,draw]
	\node[c] (0) at (1, 0) {\small $0$};
	\node[c] (1) at (0, 1) {\small $1$};
	\node[c] (2) at (-1, 0) {\small $2$};
	\node[c] (3) at (0, -1) {\small $3$};
	\path[every node/.style={font=\sffamily\small}]
	(0) edge [bend right=15] node {} (1)
	(2) edge node {} (1)
	(3) edge node {} (1)
	(1) edge [bend right=15] node {} (0);
	\end{tikzpicture}
\end{center}
\caption{A cycle $O$, spanning tree $T$, and the corresponding auxiliary graph in the (ht:$E$, cut:$V$) case. The $T$-walk of $O$ is $v_0$,\,$e_0$,\,$u_0$,\,$e_1$,\,$v_1$,\,$e_1$,\,$u_0$,\,$e_0$,\,$v_0$,\,$e_3$,\,$v_3$,\,$u_2$,\,$u_1$,\,$e_2$,\,$v_2$,\,$e_2$,\,$u_1$,\,$u_2$,\,$v_3$,\,$e_3$,\,$v_0$.}
\label{fig:T-walk}
\end{figure}

Let us now refocus on the moment when, upon the traversal of all edges of $O$, we stopped the Bernardi process. Take a spanning tree $T$ in the current graph $G$ that realizes $\mathbf f$. Such a tree exists by the definition of the Bernardi process and it contains every edge that we so far decided to keep. In particular, $T$ contains each edge of $O$ of the form $v_ie_i$. On the other hand, as $O\not\subseteq T$, there are edges of $O$ of the form $e_jv_{j+1}$ that are not in $T$. We will also need the following observation.

\begin{claim}\label{p:each_edge_in_base_cut}
	Each edge of the form $e_iv_i$ is in the fundamental cycle $C(T,e_jv_{j+1})$ for some $j$ such that $e_jv_{j+1}\notin T$.
\end{claim}

\begin{proof}
Let us take the following walk on $T$. Start from $v_0$, and traverse $v_0e_0$. If $e_0v_1\in T$, then traverse $e_0v_1$ as well. If not, then traverse the unique path in $T$ connecting $e_0$ to $v_1$. Continue this way until for each edge of $O$, the path in $T$ connecting its two endpoints is traversed. At the end we arrive back to $v_0$. Let us call this walk the $T$-walk of $O$.

As $T$ is a tree, in this walk, each traversed edge is traversed in both directions. Since each edge of the form $v_ie_i$ is traversed from the direction of $v_i$, each of these edges is also traversed in the reverse direction. By definition, an edge $v_ie_i$ is traversed from the direction of $e_i$ only if it is part of the path in $T$ connecting some nodes $e_j$ and $v_{j+1}$. The claim follows.
\end{proof}

From this point on, we prove Lemma \ref{l:no_cycl_in_Bernardi} separately for the (ht:$E$, cut:$V$) and for the (ht:$V$, cut:$V$) Bernardi processes. This is only for notational convenience as the two arguments remain very similar.

For the (ht:$E$, cut:$V$) Bernardi process, let us consider an auxiliary directed graph $D$ on the vertex set $\{0,\dots,t-1\}$. Draw an edge from $i$ to $j$ if $v_{i}e_{i}\in C(T,e_jv_{j+1})$. By Claim \ref{p:each_edge_in_base_cut}, each vertex has outdegree at least one in $D$. Hence $D$ contains at least one directed cycle. Take a directed cycle of minimal length. Let the vertex set of this cycle be $x_0,\dots,x_{r-1}$, where there is a directed edge from $x_i$ to $x_{i+1}$ (that is, $v_{x_i}e_{x_i}\in C(T,e_{x_{i+1}}v_{x_{i+1}+1})$) for each $i$ (meant modulo $r$).

We claim that $T'=\left(T\setminus\bigcup_{i=0}^{r-1} v_{x_i}e_{x_i}\right)\cup \bigcup_{i=0}^{r-1}e_{x_i}v_{x_i+1}$ is a spanning tree realizing the hypertree $\mathbf f$ on $E$. First we show that $T'$ is a spanning tree of $\bip \HH$. (The following  argument is a special case of a matroid theoretical lemma \cite[Theorem 39.13]{Schrijver_CombOpt}, but we include it for completeness.) Having chosen a minimal cycle in $D$ ensures that $v_{x_j}e_{x_j}\notin C(T,e_{x_i}v_{x_i+1})$ for any $i$ and $j$ such that $i\neq j+1$ (which is now meant modulo $r$), since otherwise there would be a shortcut in the cycle $x_0,\dots,x_{r-1}$, contradicting its minimality. Now $T_1=(T\setminus v_{x_0}e_{x_0})\cup e_{x_1}v_{x_1+1}$ is a spanning tree since $v_{x_0}e_{x_0}\in C(T,e_{x_1}v_{x_1+1})$. Moreover, since $v_{x_0}e_{x_0}\notin C(T,e_{x_j}v_{x_j+1})$ for $j\neq 1$, we have $C(T_1,e_{x_j}v_{x_j+1})=C(T,e_{x_j}v_{x_j+1})$ for all $j\neq 1$. Hence when we replace the edge $v_{x_1}e_{x_1}$ of $T_1$ with $e_{x_2}v_{x_2+1}$, the result is another tree $T_2$ and further, the fundamental cycles of $e_{x_3}v_{x_3+1},\ldots,e_{x_{r-1}}v_{x_{r-1}+1},e_{x_0}v_{x_0+1}$ with respect to $T_2$ are still the same as with respect to $T$. By a trivial induction proof, we may continue switching edges like this until we arrive at $T'$.

Now to show that $T'$ realizes $\mathbf f$, notice that by construction the degree of each emerald node is the same in $T$ as in $T'$. As $T$ realizes $\mathbf f$, so does $T'$.

Let $I=\{x_0,\dots,x_{r-1}\}$. Consider the first moment when one of the edges $v_\ell e_\ell$ becomes current edge for an index $\ell\in I$. By Claim \ref{c:Ber_cycle_cl1}, at this moment, for any $j\in I$, the edges $v_j e_j$ and $e_j v_{j+1}$ are present in the current graph. Hence $T'$ is a spanning tree in the current graph, realizing $\mathbf f$ and not containing $v_\ell e_\ell$. This contradicts the fact that $v_\ell e_\ell$ was kept by the process.

For the (ht:$V$, cut:$V$) Bernardi process, we take a similar auxiliary directed graph $D$ on the vertex set $\{\,1,\dots,t\,\}$. This time we draw an edge from $i$ to $j$ if $v_{i}e_{i}\in C(T,e_{j-1}v_j)$. Then again, each vertex has outdegree at least one in $D$ and thus $D$ contains at least one directed cycle. We take a directed cycle of minimal length and let its vertex set be $x_0,\dots,x_{r-1}$.

The fact that $T'=T\setminus\bigcup_{i=0}^{r-1} v_{x_i}e_{x_i}\cup \bigcup_{i=1}^{r}e_{x_i-1}v_{x_i}$ is a spanning tree realizing $\mathbf f$ (on $V$) can be established in the same way as in the previous case. 

Let $I=\{x_0,\dots,x_{r-1}\}$. Take the first moment when an edge of the type $v_\ell e_\ell$ becomes current edge for an index $\ell\in I$. By Claim \ref{c:Ber_cycle_cl0}, at this moment, for any $j\in I$, the edges $v_j e_j$ and $e_{j-1} v_{j}$ are present in the current graph. Hence $T'$ is a spanning tree in the current graph realizing $\mathbf f$ and not containing $v_\ell e_\ell$. This again contradicts the fact that $v_\ell e_\ell$ was kept.

Finally, as for both versions of the process the existence of $O$ led to a contradiction, we conclude that the Bernardi process may never traverse a cycle.
\end{proof}

\begin{lemma}\label{l:traversed_edge_is_not_deleted}
If an edge is traversed during a violet Bernardi run, then it is not removed later.
\end{lemma}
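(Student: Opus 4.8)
The plan is to reduce the statement to a local claim at a single violet node and then invoke Lemma~\ref{c:hedges_traversed_in_good_order}\ref{csaklila}, which by Lemma~\ref{l:no_cycl_in_Bernardi} --- the subgraph of traversed edges is acyclic at every moment --- is valid throughout the run.

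First I would dispose of the easy half. If an edge $\varepsilon$ is traversed from the \emph{violet} direction, then by the definition of a violet Bernardi run $\varepsilon$ was at that time a current edge which the process did not delete, i.e.\ $\varepsilon$ was \emph{kept}; and, as observed right after Definitions~\ref{d:Bernardi,em,vi} and~\ref{d:Bernardi,vi,vi}, the decision of keeping is final, so $\varepsilon$ is never deleted. It remains to treat an edge $\varepsilon$ that is traversed from the \emph{emerald} direction, say with the walk crossing it from its emerald endpoint into its violet endpoint $w$. I would record two facts straight from the definitions: in a violet Bernardi run a current edge is always anchored at its violet endpoint (that is where deletions take place), so $\varepsilon$ can only ever be deleted while it is the current edge at $w$; and immediately after $\varepsilon$ is traversed into $w$, the next current edge is the successor $\varepsilon^{+}$ of $\varepsilon$ at $w$ in the then-current graph.

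The crux is to show that $\varepsilon$ is never a current edge after being traversed into $w$. By Lemma~\ref{c:hedges_traversed_in_good_order}\ref{csaklila} the edges at $w$ that ever become current do so in an order following the cyclic order at $w$ and covering strictly less than a full turn, a deleted edge counting as having been current. Now every edge of $\bip\HH$ lying strictly between $\varepsilon$ and $\varepsilon^{+}$ at $w$ has, by the time $\varepsilon$ is traversed into $w$, already been deleted, hence already been a current edge at $w$; and $\varepsilon^{+}$ becomes current right after this traversal. Thus if $\varepsilon$ were also to become current at some moment \emph{after} the traversal, the order of becoming-current at $w$ would pass through all those intermediate edges, then $\varepsilon^{+}$, then round the cycle back to $\varepsilon$ --- which precedes $\varepsilon^{+}$ cyclically --- and so would complete a full turn, contradicting Lemma~\ref{c:hedges_traversed_in_good_order}\ref{csaklila} (the sole exception being the degenerate case $\varepsilon^{+}=\varepsilon$ treated below). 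Hence every moment at which $\varepsilon$ is a current edge precedes its traversal into $w$; but then the process at that moment either keeps $\varepsilon$, so it is never deleted, or deletes it, which is impossible since a deleted edge vanishes from every later current graph and so could never be traversed into $w$ at all. In both cases $\varepsilon$ is never deleted, and together with the violet-direction case this proves the lemma.

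The step I expect to be the main obstacle is this last one: pinning down the exact relationship between the order in which the edges at $w$ become current and the cyclic order of $\bip\HH$ at $w$ (including the effect of edges that were deleted in between), and applying Lemma~\ref{c:hedges_traversed_in_good_order}\ref{csaklila} consistently over the whole run, while keeping track of which endpoint a current edge is anchored at. One also has to dispatch the degenerate case in which $w$ has degree one in the current graph at the moment $\varepsilon$ is traversed into it (so $\varepsilon^{+}=\varepsilon$ and $\varepsilon$ becomes the current edge at once): then $\varepsilon$ is a bridge of the current graph, hence of every later one, hence belongs to every spanning tree realizing $\mathbf f$ and is never deleted --- so that case causes no trouble. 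Everything else is routine bookkeeping with the definitions of the Bernardi processes and Lemmas~\ref{l:no_cycl_in_Bernardi} and~\ref{c:hedges_traversed_in_good_order}.
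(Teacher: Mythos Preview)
There is a genuine gap in your argument. Your key claim is that $\varepsilon$ is \emph{never} a current edge after it is traversed from the emerald side into $w$, and you derive this from Lemma~\ref{c:hedges_traversed_in_good_order}\ref{csaklila} by arguing that making $\varepsilon$ current after $\varepsilon^{+}$ would ``complete a full turn.'' But this is a misreading of the lemma: what \ref{csaklila} (together with Lemma~\ref{l:no_cycl_in_Bernardi}) actually guarantees is that the edges at $w$ become current in the cyclic order of $\bip\HH$ \emph{without repetition}; it does not prevent all $\deg(w)$ of them from eventually becoming current once each. Indeed, Theorem~\ref{thm:Bernardi_well_def} (proved later, using the present lemma) shows that every edge becomes current exactly once. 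So in the principal case --- where $w$ is first reached via $\varepsilon$ --- the edge $\varepsilon$ \emph{will} become current, namely as the last in the cyclic sequence $\varepsilon^{+},\varepsilon^{++},\ldots,\varepsilon$ at $w$. Your claim is therefore false, and the real question --- why $\varepsilon$ is \emph{kept} rather than deleted at that moment --- is precisely what remains to be proved. Your ``full turn'' contradiction never fires.

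The paper's proof settles this by a global connectivity argument rather than a local one at $w$. Assuming for contradiction that $\varepsilon=ev$ is later deleted, one first checks (via acyclicity of the traversed subgraph) that $v$ was first reached through $ev$; then one lets $S$ be the set of all nodes first reached at or after $v$ and shows that, at the moment of deletion, every edge of the current graph incident to a node of $S$ has both endpoints in $S$. This makes the current graph disconnected, contradicting the fact that it still contains a spanning tree realizing $\mathbf f$. That global step --- seeing that deleting $\varepsilon$ severs $S$ from its complement --- has no counterpart in your local analysis at $w$, and something of this kind appears to be unavoidable.
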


\begin{proof}
If the edge $\varepsilon$ is traversed from the direction of its violet node, then it is kept and, as we have already argued after Definition \ref{d:Bernardi,vi,vi}, is never removed later.
	
Now consider the case when $\varepsilon=ev$ is traversed from the direction of its emerald node $e$. Suppose for contradiction that later on $\varepsilon$ gets deleted.  

First we establish that $v$ is first reached (by the walk associated to the process) through $ev$. Suppose that $v$ was first reached through another edge $e'v$. Consider the path from the first arrival to $v$ until $ev$ gets traversed. Since $ve$ does not get traversed from the violet direction (as it is due to be deleted), this path arrives back to $v$ so that it only traverses $ev$ in one direction. Hence there is a cycle in the set of traversed edges, contradicting Lemma \ref{l:no_cycl_in_Bernardi}.

Next, stop the Bernardi process immediately after the deletion of $\varepsilon$, and let the current graph of that moment be $G$. It is connected because $\mathbf f$ is a hypertree in it. Let $S$ be the set of nodes that were reached by the Bernardi process until the deletion of $\varepsilon$, and were first reached after reaching $v$. Let also $v\in S$ which guarantees $S\ne\varnothing$. As $e$ was reached before $v$ we have $e\notin S$ and hence $S\ne E\cup V$.

Take a vertex $x\in S$. At the current moment, the Bernardi process is at $v$, as the last step was deleting $ve$. If $x\neq v$, then when we last left $x$ before returning to $v$, we must have left it along the same edge through which we first reached it, because otherwise there would be two paths between $v$ and $x$ and hence a cycle in the subgraph of traversed edges. By Lemma \ref{c:hedges_traversed_in_good_order}, the edges incident to $x$ in $G$ that were traversed from the direction of $x$ are consecutive edges according to the cyclic order at $x$ (in $G$). Hence all the edges incident to $x$ in $G$ are already traversed from the direction of $x$. Note that this is also true for $x=v$ as the last current edge was $ve$ which is also the edge through which $v$ was first reached.

Finally, consider an edge $xy$ of $\bip \HH$ such that $x\in S$ and $y\in (E\cup V) \setminus S$. If $xy$ is in $G$, then $xy$ was traversed from the direction of $x$. Hence $y$ is already reached. As $y\notin S$, the node $y$ was also reached before reaching $v$. As $x$ was first reached after reaching $v$ (or else $x=v$), there is a path in the subgraph of traversed edges from $y$ to $x$ through $v$. But then after the traversal of $xy$ from the direction of $x$, there would be a cycle in the subgraph of traversed edges. Thus there can be no edge $xy$ in $G$ such that $x\in S$ and $y\in (E\cup V) \setminus S$. Hence $G$ is disconnected, which is a contradiction. (In other words, right before its deletion $\varepsilon$ was the only edge connecting $S$ to its complement and thus it should not have been deleted.)
\end{proof}

\begin{proof}[Proof of Theorem \ref{thm:Bernardi_well_def}]
By Lemma \ref{l:traversed_edge_is_not_deleted}, the subgraph of traversed edges is part of the current graph at all times, and by Lemma \ref{l:no_cycl_in_Bernardi}, it is cycle-free. The current graph is always connected as it contains a spanning tree realizing $\mathbf f$. Hence if we make sure that at the end of the process, the subgraph $H$ of traversed edges coincides with the current graph $G$, then we will know that $H=G$ is both cycle-free and connected, thus a spanning tree, and since it contains a realization of $\mathbf f$, it itself has to be such a realization. 

Now for this last remaining claim, it suffices to prove the assertion in the Theorem that each edge becomes current edge exactly once, because then in particular each edge is either deleted (thus belongs neither to $G$ nor to $H$) or kept and traversed (hence belongs to both). In fact, it is enough to show that the process examines each edge of $\bip \HH$ as a current edge at least once, i.e., by the time an edge would be traversed for the second time from the same direction, each edge has been current edge. This is because if an edge becomes current for a second time, then it has to have been kept the first time and thus traversed both times, ending the process.

We shall first consider where the process may end, and find that it is only possible at the base node $b_0$.

Suppose that the first edge to be traversed twice from the same direction is $ev$, and it is traversed from the direction of the emerald node $e$. If $e\neq b_0$, then take the edge $ue$ through which we first reached $e$ during the process. As the Bernardi process does not traverse a cycle, by Lemma \ref{c:hedges_traversed_in_good_order}, the edges incident to $e$ are traversed in an order compatible with the cyclic order at $e$. Hence before $ev$ is traversed for the second time, $eu$ is traversed from the emerald direction (by Lemma \ref{l:traversed_edge_is_not_deleted} it cannot be deleted from the graph, as it was already traversed). But as, again, the Bernardi process does not traverse a cycle, after traversing $eu$ from the emerald direction, we can only get back to $e$ by traversing $ue$ from the violet direction. Hence before $ev$ gets traversed for the second time from the emerald direction, $ue$ gets traversed twice from the violet direction, which is a contradiction.

Now suppose that the first edge to be traversed twice from the same direction is $ve$, and it is traversed from the violet direction, that is, from $v$. If $v\neq b_0$, then there must be an edge $e'v$ through which we first reached $v$. As $e'v$ is traversed, by Lemma \ref{l:traversed_edge_is_not_deleted}, it is not deleted from the graph during the process. Now we can repeat the argument of the previous case: The Bernardi process does not traverse a cycle, and by Lemma \ref{c:hedges_traversed_in_good_order}, the edges incident to $v$ are traversed in an order compatible with the cyclic order at $v$. Hence before $ve$ is traversed for the second time from the violet direction, $ve'$ is traversed from the violet direction (as it is not deleted). But as the Bernardi process does not traverse a cycle, after traversing $ve'$ from the violet direction, it can only get back to $v$ by traversing $e'v$ from the emerald direction. Hence before $ve$ gets traversed for the second time from the violet direction, $e'v$ gets traversed twice from the emerald direction, which is a contradiction.

We conclude that the first edge to be traversed twice by the Bernardi process can only be an edge incident to $b_0$, and it is traversed from the direction of $b_0$.

Next we claim that if a violet node $v\in V\setminus\{b_0\}$ is reached by the Bernardi process, then all the edges incident to it get examined as current edges. Indeed, as the Bernardi process does not traverse a cycle, the last time the process leaves $v$ before returning to $b_0$, it must leave $v$ on the same edge on which it was first reached. Hence from Lemma \ref{c:hedges_traversed_in_good_order}, by this time each edge incident to $v$ is examined as a current edge. This also holds for $b_0$, if $b_0\in V$, since the current edge at the end of the process is an edge that has already been a current edge. We also claim that if a node $e\in E$ is reached by the Bernardi process at all, then all the edges incident to it get either removed or traversed. Indeed, if $e\ne b_0$, then the last time the process leaves $e$ before returning to $b_0$, it must leave $e$ through the same edge on which it was first reached, and by Lemma \ref{c:hedges_traversed_in_good_order}, by this time the traversed edges incident to $e$ have been traversed in an order compatible with the cyclic order at $e$ in the current graph of that moment. Thus if an edge incident to $e$ has not yet been deleted, then it has necessarily been traversed. Again, the same holds if $e=b_0$.

To complete the proof, we need to show that each node in $V$ is reached by the process. If that was not the case, then the vertex set $X$ of the subgraph $H$ of traversed edges (at the end of the process) would be a proper subset of $E\cup V$, giving rise to a cut $K$ in $\bip\HH$. As $\bip\HH$ is connected, $K$ is a non-empty set of edges. By the previous paragraph, all edges in $K$ get examined, and hence removed, by the process. We claim that $K$ is such that each of its edges has its violet end in $X$. Indeed, edges can only be cut by the process at their violet ends and violet nodes in $(E\cup V)\setminus X$ are never reached. But this means that when the last edge of $K$ was removed, the current graph became disconnected, which contradicts the basic principle of the Bernardi process.
\end{proof}

By interchanging the roles of $E$ and $V$, we see that the statement of Theorem \ref{thm:Bernardi_well_def} holds also for the (ht:$V$, cut:$E$) and the (ht:$E$, cut:$E$) Bernardi processes.

Now that we have made sure that the Bernardi process converges to a spanning tree $T$, we may spell out the relationship between the walk associated to our process and Bernardi's original tour of $T$. As the process traverses exactly the edges of $T$, we see that the two are essentially the same, with the only difference being that we only consider edges as current edge once, whereas Bernardi's tour does so twice, once at each endpoint.
More precisely, we have the following.

\begin{lemma}
\label{lem:walk_vs_tour}
Let $T$ be a spanning tree of the ribbon graph $\bip\HH$ (with fixed base node and base edge), resulting from a violet Bernardi run on some hypertree (on $E$ or on $V$). If we list the edges of $\bip\HH$ in the order in which they become current in the tour of $T$ and delete 
\begin{itemize}
\item the second occurrences of the edges not in $T$ and
\item for each edge of $T$, its occurrence in conjunction with its emerald endpoint,
\end{itemize}
then we get the list in which the edges of $\bip \HH$ become current in our hypergraphical Bernardi process. 
\end{lemma}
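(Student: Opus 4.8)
The plan is to run Bernardi's tour of $T$ and the walk underlying the hypergraphical process side by side and prove, by induction on the number of steps, that the two stay synchronized. It suffices to do this for the (ht:$E$, cut:$V$) process of Definition~\ref{d:Bernardi,em,vi}: the (ht:$V$, cut:$V$) process has the identical structure (current edges are examined at violet nodes, kept edges are traversed along a violet-emerald-violet path, edges are cut at violet nodes), differing only in the realizability test that selects which edges to cut --- and that test is irrelevant to the comparison below, since by Theorem~\ref{thm:Bernardi_well_def} both processes traverse exactly the edges of $T$ and delete exactly the edges of $\bip\HH\setminus T$ in any case. Besides Theorem~\ref{thm:Bernardi_well_def}, the argument uses Lemma~\ref{c:hedges_traversed_in_good_order} (the walk meets the edges at each node in cyclic order), Lemma~\ref{l:traversed_edge_is_not_deleted} (a traversed edge is never deleted afterwards), Lemma~\ref{l:T-tour_cyclic_perm} (each edge is current exactly twice in the tour of $T$, once at each endpoint), and the companion fact that the tour, too, passes through the edges at each node of $\bip\HH$ in cyclic order.

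Call an occurrence of an edge in the tour of $T$ \emph{kept} if it is not one of the occurrences prescribed for deletion in Lemma~\ref{lem:walk_vs_tour} --- a second occurrence of a non-tree edge, or the emerald-endpoint occurrence of a tree edge --- and \emph{ghost} otherwise. The induction hypothesis, to be maintained after the walk has taken $k$ steps and the tour has passed its $k$-th kept occurrence, is: (i) the walk's current edge is the edge of the $(k+1)$-st kept occurrence of the tour, and the two have the same (violet) current node; (ii) the walk's current graph is $\bip\HH$ with exactly the non-tree edges deleted so far removed; and (iii) a non-tree edge has been deleted by the walk precisely when the tour has passed its violet-endpoint occurrence, and this always happens before the tour reaches that edge's emerald-endpoint occurrence. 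The base case is immediate from the definitions of the tour and of the process at the base node and base edge.

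For the inductive step, write $ve$ for the common current edge. If $ve\notin T$: by (iii) the current occurrence $(v,ve)$ is the first occurrence of $ve$, so the tour skips it and proceeds to the next edge at $v$ in $\bip\HH$, while the walk deletes $ve$ and proceeds to the next edge at $v$ in its current graph; by the cyclic-order property at $v$ together with (ii)--(iii) these two edges agree, because the only edges between them at $v$ are already-deleted non-tree edges. If $ve\in T$: the walk traverses $ve$ to $e$ and then the next present edge $ey$ at $e$; since $ey$ is traversed, Lemma~\ref{l:traversed_edge_is_not_deleted} forces $ey\in T$, so every non-tree edge strictly between $ev$ and $ey$ at $e$ has already been deleted. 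Correspondingly the tour, after the kept occurrence $(v,ve)$, runs through the emerald-endpoint occurrences of exactly those deleted non-tree edges --- second occurrences by (iii), hence ghosts --- then through the emerald-endpoint occurrence of the tree edge $ey$, again a ghost, and then reaches the occurrence of $ye^+$ at $y$, which is the next kept occurrence and equals the walk's new current edge. In both cases one verifies routinely that (ii)--(iii) are restored, and that the two processes stop at the same time --- both exactly when an edge incident to $b_0$ is about to be met a second time from the direction of $b_0$, as in the proof of Theorem~\ref{thm:Bernardi_well_def}.

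I expect the crux to be the emerald-node bookkeeping in the second case: one must check carefully that the block of ghost occurrences the tour inserts between two consecutive kept occurrences is exactly ``the emerald-endpoint occurrences of the non-tree edges the walk steps over while sitting at $e$, followed by the single emerald-endpoint occurrence of the tree edge along which the walk leaves $e$,'' with the cyclic positions matching on the nose; clause (iii) is precisely what makes this block consist of genuine second occurrences. A secondary technical point is the wrap-around at the base node, where the stopping rule of Definition~\ref{d:Bernardi,em,vi} must be lined up with that of Bernardi's tour.
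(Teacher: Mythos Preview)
Your proposal is correct, and in fact supplies considerably more detail than the paper does: the paper states Lemma~\ref{lem:walk_vs_tour} without proof, treating it as self-evident once Theorem~\ref{thm:Bernardi_well_def} is in hand (the sentence preceding the lemma simply observes that ``the process traverses exactly the edges of $T$'' and that the two walks differ only in that Bernardi's tour records each edge twice). Your step-by-step synchronization, with the three-part invariant (i)--(iii), is the natural way to make this rigorous, and the ingredients you cite --- Theorem~\ref{thm:Bernardi_well_def}, Lemma~\ref{c:hedges_traversed_in_good_order}, Lemma~\ref{l:traversed_edge_is_not_deleted}, Lemma~\ref{l:T-tour_cyclic_perm} --- are exactly the right ones.

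Two small remarks. First, in Case~2 you write that Lemma~\ref{l:traversed_edge_is_not_deleted} ``forces $ey\in T$''; strictly speaking you also need Theorem~\ref{thm:Bernardi_well_def} (the final current graph is $T$) to conclude that a never-deleted traversed edge lies in $T$, but you already list that theorem among your tools. Second, your clause~(iii) is doing slightly more work than its wording suggests: to know that the emerald-endpoint occurrence of a not-yet-deleted non-tree edge has not yet been passed by the tour, you implicitly use that emerald-endpoint occurrences are visited only during Case~2 steps, where you have already shown they belong to deleted edges. This is exactly the observation underlying Proposition~\ref{prop:bernardi->jaeger} (that the output is a $V$-cut Jaeger tree), and your induction establishes it along the way rather than assuming it --- which is as it should be, since the paper derives Proposition~\ref{prop:bernardi->jaeger} from Lemma~\ref{lem:walk_vs_tour}.
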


\section{Hypergraph polynomials \`a la Bernardi}
\label{sec:statements}

Using the process of Definition \ref{d:Bernardi,vi,vi}, that is the version that cuts edges of the bipartite graph near where hypertree values (spanning tree degrees) are assigned, we are able to give an alternative definition (analogous to the definition of the Tutte polynomial by Bernardi) for the interior polynomial of a hypergraph. 
In section \ref{sec:shelling}, we show that this definition is equivalent to the original one; in particular, the polynomial does not depend on the choice of ribbon structure and base point. For the time being, we will denote this Bernardi-type interior polynomial by $\tilde{I}$ to avoid confusion. We will also set up an exterior version $\tilde X$. 

First we define embedding activities. Note that this would not work without Theorem \ref{thm:Bernardi_well_def}, in particular the claim that the Bernardi process reaches every part of the bipartite graph.

\begin{definition}
\label{def:hyperactivity}
Let $\HH=(V,E)$ be a hypergraph and $\mathbf f$ be a hypertree on $E$. Fix a ribbon structure on $\bip\HH$, base node, and base edge and run the (ht:$E$, cut:$E$) Bernardi process. Take the order in which the edges of $\bip \HH$ become current. This induces the following order on the elements of $E$:
\begin{equation}
\label{eq:hyperedge_order}
 e_1<_{\mathbf f}e_2 \quad \Longleftrightarrow \quad \text{\parbox{7cm}{some edge incident to $e_1$ becomes current edge earlier than any edge incident to $e_2$.}}
\end{equation}
The \emph{internal embedding inactivity} of $\mathbf f$ is the number of internally inactive (that is, not internally active) hyperedges with respect to $\mathbf f$ and the above order, defined exactly as in Definition \ref{def:activity}. We denote this quantity by $\overline{ie}(\mathbf f)$.

The \emph{external embedding inactivity} of $\mathbf f$ is the number of externally inactive hyperedges with respect to $\mathbf f$ and the above order, defined as in \cite[Definition 5.1]{hiperTutte}. We denote this quantity by $\overline{ee}(\mathbf f)$. That is, $\overline{ee}(\mathbf f)$ is the number of hyperedges which, with respect to $\mathbf f$ and $<_\mathbf f$, may receive a transfer of valence from a smaller hyperedge. 
\end{definition}

\begin{ex}
We return to Example \ref{ex:apapirrasokatir} (and its notation), where we ran the (ht:$E$, cut:$E$) Bernardi process on a hypertree $\mathbf f$ on $E$. From the order in which the edges of $\bip\HH$ became current (cf.\ Figure \ref{fig:apapirrasokatir}), we see that the induced order on $E$ is $\text{left}<_{\mathbf f}\text{top}<_{\mathbf f}\text{right}$, which is just the order in which the walk associated to the process reached the elements of $E$.

The smallest node is always active, both internally and externally. As the top node has the minimal possible $\mathbf f$-value, it is internally active (cannot transfer valence to left). Externally, however, it is inactive: it is easy to see (for instance by symmetry) that $\text{left}\mapsto0$, $\text{top}\mapsto1$, $\text{right}\mapsto2$ is also a hypertree, i.e., with respect to $\mathbf f$, top may receive a transfer of valence from left. Notice that it does not concern us at all whether the new hypertree has some realization related to the just-constructed realization of $\mathbf f$. Finally the right node is internally inactive (can transfer valence to both left and top) but externally active because its $\mathbf f$-value is the largest possible and hence may not receive any transfers of valence.

Thus, in this case we find the embedding inactivity values $\overline{ie}(\mathbf f)=\overline{ee}(\mathbf f)=1$.
\end{ex}

\begin{definition}
For a hypergraph $\HH=(V,E)$ with a set of hypertrees $B_E$ and with a ribbon structure on $\bip\HH$, we let
\begin{equation}
\label{eq:tilde_definitions}
\tilde{I}_\HH(\xi)=\sum_{\mathbf f\in B_E} \xi^{\overline{ie}(\mathbf f)}\quad\text{and}\quad\tilde{X}_\HH(\eta)=\sum_{\mathbf f\in B_E} \eta^{\overline{ee}(\mathbf f)}.
\end{equation}
\end{definition}

We prove the following theorem in Section \ref{sec:shelling}.

\begin{thm}\label{thm:Bernardi-interior_well_def}
For any connected hypergraph $\HH$, ribbon structure on $\bip\HH$, base node, and base edge, we have $\tilde{I}_\HH=I_\HH$. Here $I_\HH$ is the interior polynomial of subsection \ref{ssec:interior}, whereas $\tilde{I}_\HH$ is defined in \eqref{eq:tilde_definitions} using the (ht:$E$, cut:$E$) Bernardi process.
In particular, $\tilde{I}_{\HH}$ does not depend on the ribbon structure.
\end{thm}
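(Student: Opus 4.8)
The plan is to prove Theorem~\ref{thm:Bernardi-interior_well_def} by going through the root polytope $Q_{\bip\HH}$ and invoking Theorem~\ref{t:interior_poly_ehrhart_h-vector}. Concretely, for a fixed ribbon structure and base data, let $\mathcal T$ be the set of spanning trees of $\bip\HH$ that arise as outcomes of the (ht:$E$, cut:$E$) Bernardi process, one for each hypertree $\mathbf f\in B_E$ by Theorem~\ref{thm:Bernardi_well_def}. The first main step is to show that the simplices $\{Q_T \mid T\in\mathcal T\}$ form a \emph{dissection} of $Q_{\bip\HH}$. Since $|\mathcal T|=|B_E|$ and, by the volume/marker argument recalled after Theorem~\ref{l:quasitriang_gives_bijection}, a dissection must contain exactly this many maximal simplices (one carrying each emerald marker in its interior), it suffices to prove that the interiors of the $Q_T$ are pairwise disjoint; equivalently, that distinct Bernardi trees $T\ne T'$ realize distinct hypertrees on $E$ — but that is immediate since $T$ is the unique Bernardi output for $\mathbf f_E(T)$ — together with the packing fact that these simplices, having the right total volume and mutually disjoint interiors, must cover $Q_{\bip\HH}$. (This is where one reinterprets Bernardi trees as Jaeger trees, as promised in Section~\ref{sec:Jaeger_trees}, since the local Jaeger rule is what makes the non-crossing/compatibility bookkeeping tractable.)

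The second main step is to exhibit a \emph{shelling order} on this dissection. The natural candidate is to order the trees $T\in\mathcal T$ by the induced order in which the Bernardi walks reach things — more precisely, to compare two Bernardi runs lexicographically by the first place where their walks diverge (keep vs.\ cut), which is exactly the "same simple principle" the introduction advertises. I would prove that, with this order, each $Q_T$ (past the first) meets the union of the earlier simplices in a nonempty union of facets of $Q_T$. The facets of $Q_T$ correspond to edges $\varepsilon\in T$ (remove $\varepsilon$), and a facet lies on the earlier part of the dissection precisely when the "other side" tree $T^{\varepsilon}$ — swap $\varepsilon$ for the appropriate edge across the cut, mimicking the edge-switch construction $T'$ in the proof of Lemma~\ref{l:no_cycl_in_Bernardi} — is again a Bernardi tree and precedes $T$. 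One then checks: (i) at least one such facet always exists (the edge kept \emph{last} by the process gives one, because cutting it instead would have been legal at a strictly earlier walk-divergence point, producing an earlier Bernardi tree), and (ii) the set of such facets is determined by a clean local condition on the walk, so the "union of facets" requirement holds. This shelling step is the main obstacle: matching the combinatorial "which keep-decisions can be reversed" data to the geometric "which facets are shared" data requires the careful compatibility analysis of Jaeger trees, and controlling that the shared faces are exactly facets (not lower-dimensional) is delicate precisely because the dissection need not be a triangulation (Examples~\ref{ex:non-triangulation}, \ref{ex:otszog}).

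The third step is bookkeeping that identifies the $h$-vector of this shelling with the generating function $\tilde I_\HH$. For a tree $T$ appearing at position $>1$ in the shelling order, the number of facets of $Q_T$ on the earlier union equals the number of edges $\varepsilon\in T$ whose swap produces an earlier Bernardi tree; I would show this count equals the internal embedding \emph{activity} $ie(\mathbf f_E(T)) = |E| - \overline{ie}(\mathbf f_E(T))$ read off from the order $<_{\mathbf f}$ of Definition~\ref{def:hyperactivity} — an edge $e\in E$ is internally active for $\mathbf f$ iff $\mathbf f$ cannot transfer valence from $e$ to any smaller hyperedge, and this "smaller" is governed by exactly the walk order that defines the shelling, so an active $e$ corresponds to a facet \emph{not} on the earlier union and an inactive $e$ to one that is. Hence $a_i$, the number of trees contributing $i$ shared facets, equals the number of hypertrees $\mathbf f$ with $\overline{ie}(\mathbf f)=i$, i.e.\ $\sum_i a_i\xi^i = \tilde I_\HH(\xi)$. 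Combining with \eqref{eq:Ehrhart} and Theorem~\ref{t:interior_poly_ehrhart_h-vector}, the same coefficient sequence equals $I_\HH$, giving $\tilde I_\HH = I_\HH$; independence of the ribbon structure and base point is then automatic since the right-hand side does not see them.

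Two points deserve care in writing up. First, in step one the cover-by-packing argument needs the lattice-point lemma \cite[Lemma 3.3]{KP_Ehrhart} (all maximal simplices of $Q_G$ have equal normalized volume) plus the fact that the Bernardi trees are pairwise distinct as \emph{trees}, not merely as hypertrees — but the latter is false in general (several trees can realize one hypertree), so the correct statement is that the \emph{output} of the process is a well-defined function of $\mathbf f$, giving exactly $|B_E|$ simplices, and one must still argue their interiors are disjoint; this is where Jaeger-tree compatibility via \cite[Lemma 12.6]{alex} enters and is the true content. Second, in step three one should double-check the off-by-$\beta_1$ and inactive-vs-active conventions (the excerpt's Definition~\ref{def:activity} makes external edges internally active, unlike Tutte/Bernardi), so that the $h$-vector genuinely tracks $\overline{ie}$ and not $ie$; the cleanest route is to verify the claimed facet count directly against Definition~\ref{def:hyperactivity} rather than translating through the graph case.
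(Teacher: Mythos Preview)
Your overall architecture matches the paper's exactly: dissect $Q_{\bip\HH}$ by Bernardi/Jaeger trees, exhibit a shelling order, identify the $h$-vector with $\tilde I_\HH$, and conclude via Theorem~\ref{t:interior_poly_ehrhart_h-vector}. But three of your proposed executions have genuine gaps.

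First, you say disjoint interiors come from ``Jaeger-tree compatibility via \cite[Lemma 12.6]{alex}.'' That lemma detects when two simplices meet in a common \emph{face}; since (as you yourself note) the dissection is typically not a triangulation, pairs of Jaeger trees routinely fail that condition, so it cannot be the tool. The paper instead proves disjoint interiors (Lemma~\ref{lem:disjoint}) by a direct hyperplane separation: if the tours of $T_1$ and $T_2$ first diverge at $\varepsilon\in T_2\setminus T_1$, the linear functional attached to the cut $C^*(T_2,\varepsilon)$ is nonnegative on $Q_{T_2}$ and, via Lemma~\ref{l:subs_trees_cuts}, nonpositive on $Q_{T_1}$. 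Second, your bookkeeping step has a structural hole. Facets of $Q_T$ are indexed by the $|E|+|V|-1$ \emph{edges of $T$}, not by the $|E|$ hyperedges, so ``an inactive $e\in E$ corresponds to a shared facet'' is not a priori meaningful. The paper bridges this with the notion of an \emph{internally semi-passive} tree edge: Lemma~\ref{l:activities_description} and Corollary~\ref{cor:passziv_csucshoz_passziv_el} set up a bijection between such edges and the internally inactive hyperedges (each inactive $e$ is matched with the unique tree edge through which the emerald tour first reaches $e$), and only then does Theorem~\ref{thm:shelling} identify the semi-passive edges with the shared facets. Without this intermediary your claimed equality of the facet count with $\overline{ie}(\mathbf f)$ (you wrote $ie$, but your surrounding text makes clear you mean $\overline{ie}$) is unsupported. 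Third, in the shelling step you propose to witness that $Q_{T-\varepsilon}$ lies in the earlier union via a single ``swap tree'' $T^\varepsilon$. Because the dissection is not a triangulation, $Q_{T-\varepsilon}$ is generally not a face of any single earlier $Q_{T'}$; the paper's proof of Theorem~\ref{thm:shelling} covers it by an entire family $\{T_0\cup\varepsilon'\cup T_1'\}$, where $T_1'$ ranges over all Jaeger trees of the subgraph spanned by the non-base component, and then uses convexity of $Q_{G_1}$ to conclude.
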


The order $<_{\mathbf f}$ of \eqref{eq:hyperedge_order} has three more variants: Since any of the four versions, (ht:$E$ or $V$, cut:$E$ or $V$), of our process (assuming the presence of a hypertree) orders the set of edges of $\bip\HH$, each can be used to order $E$, as well as to order $V$, by the smallest of the incident edges. Then for a hypertree on, say, $E$, the (ht:$E$, cut:$V$) process can be used to define its embedding inactivities in complete analogy with Definition \ref{def:hyperactivity}, which used the (ht:$E$, cut:$E$) process.

When applied to a (ribbon) graph $\HH=G=(V,E)$ and one of its spanning trees $T$ (viewed as a hypertree on $E$), the (ht:$E$, cut:$E$) and (ht:$E$, cut:$V$) processes induce the same order on the set of edges $E$, namely the order $<_T$ of subsection \ref{subsec:Bernardi_for_graphs}. This is not true for general hypergraphs. Thus when we use the (ht:$E$, cut:$V$) process to define embedding inactivities (for hypertrees on $E$), the individual values of the statistic will be different from those provided by Definition \ref{def:hyperactivity}. Note also that the order on $E$ (induced by the hypertree on $E$) is less natural in this case: nodes in $E$ may `get numbered' when the walk associated to the process \emph{cuts} some edge adjacent to them, which may be well before the walk actually \emph{reaches} the node. Yet computer simulations suggest that the distribution of the interior inactivity statistic remains the same. Unfortunately, at the moment, we are only able to state this as a conjecture.

\begin{conj}
\label{conj:interior_cutatvertex}
Let $\HH=(V,E)$ be a connected hypergraph with a ribbon structure and base point for $\bip\HH$. For a hypertree $\mathbf f$ on $E$, run the (ht:$E$, cut:$V$) Bernardi process and define the order $<_{\mathbf f}$ on $E$ as in \eqref{eq:hyperedge_order}. Compute the internal embedding inactivity of $\mathbf f$ with respect to $<_{\mathbf f}$ for all $\mathbf f$ and define a one-variable polynomial as in the first equation of \eqref{eq:tilde_definitions}. The result coincides with the interior polynomial $I_\HH$. 
\end{conj}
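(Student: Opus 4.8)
The plan is to follow, for the (ht:$E$, cut:$V$) process, the same route by which Theorem~\ref{thm:Bernardi-interior_well_def} is proved for the (ht:$E$, cut:$E$) process: exhibit a shellable dissection of the root polytope $Q_{\bip\HH}$ whose $h$-vector, with respect to a natural shelling order, equals $\sum_{\mathbf f\in B_E}\xi^{\overline{ie}(\mathbf f)}$, and then conclude via Theorem~\ref{t:interior_poly_ehrhart_h-vector}, which identifies the $h$-vector of any shellable dissection of $Q_{\bip\HH}$ with the interior polynomial $I_\HH$.

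The first step is to show that the trees $T(\mathbf f)$ output by the (ht:$E$, cut:$V$) process, as $\mathbf f$ ranges over $B_E$, form a dissection of $Q_{\bip\HH}$. By Theorem~\ref{thm:Bernardi_well_def} this is a collection of $|B_E|$ spanning trees with $\mathbf f_E(T(\mathbf f))=\mathbf f$, so the emerald markers of the simplices $Q_{T(\mathbf f)}$ are pairwise distinct; since $|B_E|$ is the common number of maximal simplices in any dissection of $Q_{\bip\HH}$ (by the volume argument of Section~\ref{sec:root_polytope}) and all maximal simplices have the same volume, it suffices to prove that the $Q_{T(\mathbf f)}$ have pairwise disjoint interiors. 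I would do this by first giving a local, hypertree-free description of the trees $T(\mathbf f)$, analogous to the Jaeger trees of Section~\ref{sec:Jaeger_trees} but phrased in terms of the behavior of the boundary walk at violet nodes, and then checking Postnikov's alternating-cycle compatibility criterion (or, more realistically, adapting the self-contained shelling argument of Section~\ref{sec:shelling}) against that description.

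The second step is to equip the outcome trees with the appropriate analogue of the natural order of Section~\ref{sec:shelling} and to verify that it is a shelling order of the dissection just constructed. Steps three and four are the quantitative heart: for each $\mathbf f$, determine the set of facets of $Q_{T(\mathbf f)}$ along which $Q_{T(\mathbf f)}$ meets the union of the earlier simplices, and prove that their number equals $\overline{ie}(\mathbf f)$, the internal embedding inactivity computed from the (ht:$E$, cut:$V$) order $<_{\mathbf f}$ of \eqref{eq:hyperedge_order}. Granting all of this, the $h$-vector entry $a_i$ is the number of $\mathbf f\in B_E$ with $\overline{ie}(\mathbf f)=i$, so $\sum a_i\xi^i=\tilde I_\HH$; combined with Theorem~\ref{t:interior_poly_ehrhart_h-vector} this gives $\tilde I_\HH=I_\HH$ and hence Conjecture~\ref{conj:interior_cutatvertex}.

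I expect the fourth step to be the genuine obstacle, and it is exactly the point at which the statement remains conjectural. In the (ht:$E$, cut:$E$) case an emerald node $e$ receives its position in the order at the very moment the walk first acts on an edge incident to $e$, so the instant at which a transfer of valence out of $e$ becomes (or ceases to be) relevant is synchronized with the combinatorics of which facet of $Q_{T(\mathbf f)}$ is being glued to the growing union; this synchronization is what lets the facet-counting argument of Section~\ref{sec:shelling} proceed step by step. In the (ht:$E$, cut:$V$) case the node $e$ may already be numbered --- through an edge cut at its violet endpoint --- well before the walk reaches $e$, so the correspondence between ``newly glued facets of $Q_{T(\mathbf f)}$'' and ``hyperedges that cannot transfer valence to a $<_{\mathbf f}$-smaller hyperedge'' no longer tracks the walk locally and would have to be established by a more global argument. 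A fallback would be to seek a cardinality-preserving bijection of $B_E$ carrying the (ht:$E$, cut:$V$) statistic $\overline{ie}$ to the (ht:$E$, cut:$E$) one; but these two statistics already disagree pointwise and agree only in distribution, so producing such a bijection seems itself to require the dissection picture above, which is why I would attack the problem through the dissection directly.
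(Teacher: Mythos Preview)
This statement is a \emph{conjecture} in the paper, not a theorem; the paper offers no proof and says explicitly that it is ``only able to state this as a conjecture.'' Your proposal is likewise not a proof but a strategy outline with an acknowledged gap, so there is nothing to compare against a nonexistent paper proof. That said, your outline misreads how much of the work is already done.

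Your steps 1--3 are not new obstacles: the trees produced by the (ht:$E$, cut:$V$) process are \emph{exactly} the $V$-cut Jaeger trees (Proposition~\ref{prop:bernardi->jaeger} and Theorem~\ref{t:J-trees_are_B-trees}), so the ``local, hypertree-free description'' you propose to invent is precisely the Jaeger condition already in the paper. Theorem~\ref{t:J-trees_form_quasitr} shows these simplices form a dissection, and Theorem~\ref{thm:shelling} shows it is shellable with the facet count given by the number of internally semi-passive edges. So the dissection, the shelling order, and the $h$-vector are all known; nothing here needs to be redone.

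The entire content of the conjecture is your step 4, and you correctly locate the difficulty. In the notation of the paper: the (ht:$E$, cut:$V$) process orders the edges of $\bip\HH$ in the \emph{violet} $T$-order (Remark~\ref{f:orders}), so the order $<_{\mathbf f}$ on $E$ in \eqref{eq:hyperedge_order} is the one induced by the violet $T$-order. But Lemma~\ref{l:activities_description}(iii) and Corollary~\ref{cor:passziv_szemipassziv} tie the $h$-vector entry to internal inactivity of $\mathbf f_E(T)$ with respect to the \emph{emerald} $T$-order on $E$. What is missing is any link between inactivity counts for $\mathbf f_E(T)$ under these two different orders on $E$; the paper has no analogue of Lemma~\ref{l:activities_description}(iii) with ``emerald'' replaced by ``violet,'' and your proposal does not supply one either. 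Until that lemma (or a distribution-preserving bijection as you suggest) is produced, the argument remains incomplete --- which is exactly why the paper leaves the statement as a conjecture.
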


Again, Conjecture \ref{conj:interior_cutatvertex} is true for cases when $\HH$ is a graph by the coincidence of orders mentioned above. We also conjecture that both Theorem \ref{thm:Bernardi-interior_well_def} and Conjecture \ref{conj:interior_cutatvertex} are valid for the exterior polynomial as well.

\begin{conj}
\label{conj:exterior}
For any connected hypergraph $\HH$, ribbon structure on $\bip\HH$, base node, and base edge, we have $\tilde{X}_\HH=X_\HH$. In particular, the polynomial $\tilde{X}_{\HH}$ of \eqref{eq:tilde_definitions} does not depend on the ribbon structure. The same two claims remain true if we re-define $\tilde{X}_\HH$ using the (ht:$E$, cut:$V$) Bernardi process.
\end{conj}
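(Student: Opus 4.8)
\emph{The natural strategy.} The plan is to imitate the proof of Theorem \ref{thm:Bernardi-interior_well_def} as closely as possible. That proof has essentially one engine: Theorem \ref{t:interior_poly_ehrhart_h-vector} (equivalently, Kato's statement \cite{Kato} that $I_\HH$ is the $h^*$-vector of $Q_{\bip\HH}$), combined with the fact, established in Section \ref{sec:shelling}, that the Bernardi/Jaeger trees form a shellable dissection of $Q_{\bip\HH}$ whose shelling order records $\overline{ie}(\mathbf f)$ as the number of ``old'' facets of the simplex $Q_{T_{\mathbf f}}$. So the first thing I would look for is a polytope $P_\HH$ attached to the \emph{ordered} (hence transpose-asymmetric) pair $(V,E)$ whose $h^*$-vector is $X_\HH$ --- the natural candidates being the ``exterior polytopes'' of Cameron--Fink \cite{Cameron-Fink} --- together with a dissection-compatible way of viewing each Jaeger tree $T$ as a maximal simplex of $P_\HH$. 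If such a $P_\HH$ existed and one could show (i) that the Jaeger simplices dissect it and (ii) that the shelling order of Section \ref{sec:shelling} is still a shelling of that dissection with the facet count of the $\mathbf f$-th simplex equal to $\overline{ee}(\mathbf f)$, then the Ehrhart-uniqueness argument behind \eqref{eq:Ehrhart} would finish the proof and would deliver ribbon-independence as a by-product, exactly as in the interior case.

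\emph{The obstruction, and a fallback.} The difficulty is precisely that no such polytope is known: $X_\HH$ genuinely distinguishes $\HH$ from $\overline\HH$, so it cannot be extracted from $Q_{\bip\HH}$, and this is exactly why the statement is only a conjecture. Lacking a polytope, I would fall back on a self-contained combinatorial argument. Since external inactivity with respect to a fixed linear order on $E$ does not depend on that order and has generating function $X_\HH$ \cite{hiperTutte}, it suffices to show that $\overline{ee}$ and fixed-order external inactivity have the same distribution over $B_E$. The plan is to keep the shelling order $\prec$ on Jaeger trees from Section \ref{sec:shelling} but to read an ``exterior'' statistic off it: one would like $\overline{ee}(\mathbf f)$ to count the facets of $Q_{T_{\mathbf f}}$ through which $Q_{T_{\mathbf f}}$ meets the $\prec$-earlier simplices \emph{after projecting away the $V$-coordinates}, i.e. inside the hypertree polytope $\conv B_E$ (where $X_\HH$ naturally wants to live) rather than inside $Q_{\bip\HH}$. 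The hard analytic point is then to prove that the projected family of simplices is still a shellable dissection of $\conv B_E$ with the claimed $h$-vector; since $\conv B_E$ has strictly smaller dimension than $Q_{\bip\HH}$, the Jaeger simplices do not obviously project to a dissection, and controlling overlaps after projection is where the real work lies.

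\emph{The (ht:$E$, cut:$V$) variant, and the main obstacle.} For the second assertion of Conjecture \ref{conj:exterior} I would try to reduce the (ht:$E$, cut:$V$) case to the (ht:$E$, cut:$E$) case rather than redo everything. By Theorem \ref{thm:Bernardi_well_def} and Section \ref{sec:Jaeger_trees}, the two processes run on the same hypertree $\mathbf f$ converge to trees that differ only in a controlled way (and for a graph they coincide, which is why the conjecture is already known there); one would compare, hyperedge by hyperedge, which element of a base cycle the two walks reach first, and show that the set of externally active hyperedges of $\mathbf f$ is the same for both induced orders, reducing the cut:$V$ claim to the cut:$E$ one. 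I expect the genuine obstacle to be the missing ingredient of the first paragraph: producing --- or proving the non-existence of --- a polytope whose $h^*$-vector is $X_\HH$ and which is subdivided by the Bernardi/Jaeger simplices. In the interior story, order-independence, ribbon-independence, and the very notion of the relevant $h$-vector all flow from such a polytope; without it, one is left with the global, ribbon-dependent combinatorics of Section \ref{sec:shelling} and no geometric anchor with which to prove that $\overline{ee}$ has the right distribution, and that, I believe, is where the proof will be won or lost.
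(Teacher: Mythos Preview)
Your proposal is not a proof, and appropriately so: the statement is a \emph{conjecture} in the paper, and the paper offers no proof of it. Indeed, immediately after stating Conjecture~\ref{conj:exterior} the authors write that they are ``probably much farther from the proof'' than for Conjecture~\ref{conj:interior_cutatvertex}, precisely because ``we do not know what geometric object should play the same role, in relation to exterior activities, as the root polytope plays for interior activities.'' Your diagnosis of the main obstruction --- the absence of a polytope whose $h^*$-vector is $X_\HH$ and which is dissected by the Jaeger simplices --- coincides exactly with the paper's own assessment.

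Your fallback idea of projecting to $\conv B_E$ and hoping for a shellable dissection there is a reasonable thing to try, but you should be aware that it is genuinely speculative: the Jaeger simplices have dimension $|E|+|V|-2$ while $\conv B_E$ has dimension $|E|-1$, so the projection collapses each simplex drastically and there is no a priori reason the images should tile $\conv B_E$ without overlap, let alone shellably. Similarly, your proposed reduction of the (ht:$E$, cut:$V$) variant to the (ht:$E$, cut:$E$) variant by comparing the two induced orders hyperedge-by-hyperedge is optimistic; the paper notes that for general hypergraphs the two processes induce genuinely different orders on $E$, and there is no claimed bijection or comparison lemma between them beyond the graph case. In short: your strategic outline is sensible and your identification of the bottleneck is on target, but none of the steps you sketch is currently known to go through, which is why the statement remains open.
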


We are probably much farther from the proof of Conjecture \ref{conj:exterior} than that of \ref{conj:interior_cutatvertex}. This is because we do not know what geometric object should play the same role, in relation to exterior activities, as the root polytope plays for interior activities in the argument that we are about to present.

\section{Jaeger trees}
\label{sec:Jaeger_trees}

In this section we give an alternative characterization of the set of spanning trees of $\bip \HH$ that the Bernardi process outputs. Here $\HH=(V,E)$ is a connected hypergraph and just like in the previous two sections, we work with a fixed ribbon structure, base node, and base edge.

\subsection{Definitions}

Since the notion of the `walk associated to the Bernardi process' of Section \ref{sec:Bernardi-process} only applies in the presence of a hypertree, and Jaeger trees will be defined without referring to hypertrees, we will revert to the definition of the tour of a spanning tree from subsection \ref{subsec:Bernardi_for_graphs}. For a spanning tree $T$ of the graph $\bip \HH$, we can take the tour of $T$ and hence obtain the order $\leq_T$ on the set of incident node-edge pairs of $\bip \HH$. If the edge $xy$ is not in $T$, then by Lemma \ref{l:T-tour_cyclic_perm}, it is skipped twice by the tour, once with each endpoint as current node. Let us consider the first of the two events and say that $xy$ is \emph{cut} at $x$ during the tour of $T$ if $(x,xy)<_T (y,xy)$. In this case we will also think of the `time' when $(x,xy)$ is current as the `moment when $xy$ is cut from the graph' (on our way of trimming $\bip\HH$ down to $T$). We also make the following definition. 

\begin{definition}
A spanning tree $T$ of $\bip \HH$ is called a \emph{$V$-cut} (resp.\ \emph{$E$-cut}) \emph{Jaeger tree} if in the tour of $T$, each non-edge of $T$ is cut at its violet (resp.\ emerald) endpoint.
\end{definition}

\begin{figure}
\begin{center}
	\begin{tikzpicture}[-,>=stealth',auto,scale=0.4,
	thick]
	\tikzstyle{c}=[{circle,draw,font=\sffamily\small}]
	\node[c,color=green,fill,label=left:{\small $b_1=e_0$}] (0) at (0, 0.1) {};
	\node[c,color=blue,fill,label=below:{\small $v_1$}] (1) at (3, 2) {};
	\node[c,color=blue,fill,label=left:{\small $b_0=v_0$}] (2) at (-3, 2) {};
	\node[c,color=green,fill,label=right:{\small $e_1$}] (3) at (3, 5) {};
	\node[c,color=green,fill, label=left:{\small $e_2$}] (4) at (-3, 5) {};
	\node[c,color=blue,fill, label=left:{\small $v_2$}] (5) at (0, 6.9) {};
	\node[c,color=green,fill, label=below:{\small $e_3$}] (6) at (0, 3.55) {};
	\path[every node/.style={font=\sffamily\small}, line width=0.2mm]
	(1) edge node {} (3)
	(4) edge node {} (5)
    (1) edge node {} (6);
	\path[every node/.style={font=\sffamily\small}, line width=0.8mm]
	(1) edge node {} (0)
	(2) edge node {} (4)
	(0) edge node {} (2)
	(5) edge node {} (3)
    (6) edge node {} (2)
	(5) edge node {} (6);
	\end{tikzpicture}
	\hspace{0.8cm}
	\begin{tikzpicture}[-,>=stealth',auto,scale=0.4,
	thick]
	\tikzstyle{c}=[{circle,draw,font=\sffamily\small}]
	\node[c,color=green,fill, label=left:{\small $b_1=e_0$}] (0) at (0, 0.1) {};
	\node[c,color=blue,fill,label=below:{\small $v_1$}] (1) at (3, 2) {};
	\node[c,color=blue,fill, label=left:{\small $b_0=v_0$}] (2) at (-3, 2) {};
	\node[c,color=green,fill, label=right:{\small $e_1$}] (3) at (3, 5) {};
	\node[c,color=green,fill, label=left:{\small $e_2$}] (4) at (-3, 5) {};
	\node[c,color=blue,fill, label=left:{\small $v_2$}] (5) at (0, 6.9) {};
	\node[c,color=green,fill, label=below:{\small $e_3$}] (6) at (0, 3.55) {};
	\path[every node/.style={font=\sffamily\small}, line width=0.2mm]
	(0) edge node {} (2)
    (2) edge node {} (4)
    (1) edge node {} (6);
	\path[every node/.style={font=\sffamily\small}, line width=0.8mm]
	(1) edge node {} (0)
	(1) edge node {} (3)
	(5) edge node {} (3)
	(4) edge node {} (5)
    (6) edge node {} (2)
	(5) edge node {} (6);
	\end{tikzpicture}
	\end{center}
\caption{For the (positive) planar ribbon structure, the spanning tree on the left is a $V$-cut Jaeger tree, while the spanning tree on the right is not a Jaeger tree.}
\label{fig:Jaeger-trees}
\end{figure}

We say that a spanning tree of $\bip \HH$ is a \emph{Jaeger tree} if it is either a $V$-cut or an $E$-cut Jaeger tree. I.e., Jaeger trees are distinguished by the property that in their tours, each non-edge is skipped for the first time at its endpoint of the same fixed color. 

\begin{ex}
\label{ex:kektura}
Let the positive orientation of the plane induce the ribbon structure of the bipartite graph of Figure \ref{fig:Jaeger-trees}. With $v_0$ as base node and $v_0e_0$ as base edge, the tour of the spanning tree on the left is $v_0, v_0e_0$; $e_0,e_0v_1$; $v_1, v_1e_1$; $v_1, v_1e_3$; $v_1, v_1e_0$; $e_0,e_0v_0$; $v_0,v_0e_3$; $e_3,e_3v_1$; $e_3,e_3v_2$; $v_2, v_2e_1$; $e_1, e_1v_1$; $e_1, e_1v_2$; $v_2, v_2e_2$; $v_2,v_2e_3$; $e_3, e_3v_0$; $v_0, v_0e_2$;  $e_2, e_2v_2$; $e_2, e_2v_0$. This is a $V$-cut Jaeger tree since, e.g., $(v_1,v_1e_1)<_T (e_1, e_1v_1)$ and a similar property holds for the other two non-edges of the tree. On the other hand, the spanning tree on the right is not a Jaeger tree since $v_0e_0$ is cut at its violet endpoint, while $e_3v_1$ is cut at its emerald endpoint.
\end{ex}

We will see that Jaeger trees are exactly the outcomes of our hypergraphical Bernardi processes. Let us prove the easier implication first.

\begin{prop}
\label{prop:bernardi->jaeger}
The (ht:$E$, cut:$V$) and the (ht:$V$, cut:$V$) Bernardi processes produce $V$-cut Jaeger trees for any hypertree (on $E$ or on $V$, respectively).
\end{prop}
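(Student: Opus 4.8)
The plan is to derive the statement from Theorem~\ref{thm:Bernardi_well_def} and Lemma~\ref{lem:walk_vs_tour}. The essential observation is that both processes named in the proposition are \emph{violet Bernardi runs}: the (ht:$E$, cut:$V$) process is Definition~\ref{d:Bernardi,em,vi} applied to $\HH$, while the (ht:$V$, cut:$V$) process is Definition~\ref{d:Bernardi,vi,vi} applied to $\overline\HH$, and in either case, whenever an edge is examined as the current edge or is deleted, this happens at its endpoint in the color class $V$ (that is, at a violet node of $\bip\HH$). So it suffices to treat an arbitrary violet Bernardi run on a hypertree $\mathbf f$ and to show that its output is a $V$-cut Jaeger tree.

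First I would record what the earlier results give about such a run. By Theorem~\ref{thm:Bernardi_well_def} the process terminates in a spanning tree $T$ of $\bip\HH$, and every edge becomes current exactly once. If an edge is examined and not deleted then it is kept, hence (by Lemma~\ref{l:traversed_edge_is_not_deleted}, or directly by the remark following Definition~\ref{d:Bernardi,vi,vi}) never removed afterwards, so it lies in $T$; conversely a deleted edge is absent from the final current graph $T$. Since each edge is examined exactly once, the non-edges of $T$ are precisely the edges that the run deletes, and --- this is the whole point of restricting to a \emph{violet} run --- each of them is deleted while being the current edge at its violet endpoint.

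Next I would invoke Lemma~\ref{lem:walk_vs_tour}, which identifies the list of current edges of the run with the tour of $T$ after discarding, for every non-edge of $T$, its second occurrence (and, for every edge of $T$, its occurrence at the emerald endpoint). Consequently, for a non-edge $\varepsilon=ve$ of $T$ with $v\in V$ and $e\in E$, the unique time $\varepsilon$ is a current edge of the run is exactly the \emph{first} of the two times $\varepsilon$ is skipped in the tour of $T$; and by the previous paragraph that first skip occurs at $v$. Hence $(v,\varepsilon)<_T(e,\varepsilon)$, i.e.\ $\varepsilon$ is cut at its violet endpoint during the tour of $T$. As $\varepsilon$ was an arbitrary non-edge of $T$, the tree $T$ is a $V$-cut Jaeger tree by definition.

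I do not expect a serious obstacle: Theorem~\ref{thm:Bernardi_well_def} and Lemma~\ref{lem:walk_vs_tour} do the heavy lifting, and the only thing that needs care is the bookkeeping of the two color classes. Specifically, the one point to verify is that in both of the processes under consideration the cutting always takes place at a $V$-node of $\bip\HH$, so that Lemma~\ref{lem:walk_vs_tour} converts ``deleted at its violet end during the run'' into ``skipped first at its violet end during the tour of $T$'', which is precisely the defining property of a $V$-cut Jaeger tree.
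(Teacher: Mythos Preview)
Your proof is correct and follows essentially the same approach as the paper: both arguments combine Theorem~\ref{thm:Bernardi_well_def} (so that the output is a spanning tree $T$ and each edge is current exactly once) with Lemma~\ref{lem:walk_vs_tour} to conclude that every non-edge of $T$ is first skipped in the tour of $T$ at its violet endpoint. The paper's version is just more compressed, summarizing your bookkeeping in the single sentence that since in a violet run an edge reached from the emerald side must be traversed, any non-edge of $T$ is first reached (and cut) at its violet endpoint.
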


\begin{proof}
Let $T$ be one of the spanning trees in question. In Lemma \ref{lem:walk_vs_tour} we spelled out the correspondence between the tour of $T$ and the hypergraphical Bernardi process which produced $T$. To put it simply, since in the Bernardi process an edge needs to be traversed if we first arrive at it from the emerald direction, in the tour of $T$, each edge not in $T$ has to be first reached, and cut, at its violet endpoint. Hence $T$ is indeed a $V$-cut Jaeger tree.
\end{proof}

With the same proof, we also obtain the following.

\begin{prop} \label{p:B-trees_are_J-trees}
The (ht:$V$, cut:$E$)  and the (ht:$E$, cut:$E$) Bernardi processes produce $E$-cut Jaeger trees for any hypertree.
\end{prop}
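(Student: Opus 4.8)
The plan is to run the mirror image of the argument given for Proposition~\ref{prop:bernardi->jaeger}, interchanging the roles of the two color classes $E$ and $V$ throughout. Let $T$ be a spanning tree produced by the (ht:$E$, cut:$E$) or the (ht:$V$, cut:$E$) Bernardi process on some hypertree. First I would invoke the $E\leftrightarrow V$ symmetric form of Theorem~\ref{thm:Bernardi_well_def}, recorded in the text immediately after its proof: it guarantees that $T$ really is a spanning tree of $\bip\HH$ realizing the input hypertree, and that every edge of $\bip\HH$ becomes current exactly once during the run, so that the associated walk is well defined and reaches every node.

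Next I would use the $E\leftrightarrow V$ mirror of Lemma~\ref{lem:walk_vs_tour}: the order in which the edges of $\bip\HH$ become current in this emerald Bernardi run coincides with the tour of $T$ once we delete the second occurrence of each non-edge of $T$ and, for each edge of $T$, its occurrence at the violet endpoint. One may cite this purely by symmetry, since nothing in Section~\ref{sec:Bernardi-process} privileges one color over the other, or re-derive it verbatim. The upshot is that the moment at which a non-edge $\varepsilon$ is examined, and cut, by the emerald process corresponds exactly to the first occurrence of $\varepsilon$ in the tour of $T$.

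The decisive observation is the analogue of the one highlighted in the proof of Proposition~\ref{prop:bernardi->jaeger}: in an emerald-cut Bernardi process the walk is forced to traverse an edge whenever it reaches it from the violet endpoint, because cutting is permitted only at emerald endpoints and a traversed edge is never deleted (Lemma~\ref{l:traversed_edge_is_not_deleted}, mirrored). Hence every non-edge of $T$ must be first reached, and therefore cut, at its emerald endpoint; equivalently $(x,\varepsilon)<_T(y,\varepsilon)$ with $x$ the emerald and $y$ the violet endpoint of $\varepsilon$. This is precisely the condition defining an $E$-cut Jaeger tree, which finishes the argument.

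I do not expect a genuine obstacle here: the proof is a routine reflection of the previous one. The only point that needs care is confirming that the symmetric versions of Theorem~\ref{thm:Bernardi_well_def}, Lemma~\ref{lem:walk_vs_tour}, and Lemma~\ref{l:traversed_edge_is_not_deleted} are legitimately available, which holds because the relevant definitions and arguments of Section~\ref{sec:Bernardi-process} are manifestly invariant under swapping the two color classes.
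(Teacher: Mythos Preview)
Your proposal is correct and follows essentially the same approach as the paper: the paper simply states that the result follows ``with the same proof'' as Proposition~\ref{prop:bernardi->jaeger}, i.e., by interchanging the roles of $E$ and $V$, which is precisely what you carry out in detail. Your explicit invocation of the symmetric versions of Theorem~\ref{thm:Bernardi_well_def} and Lemma~\ref{lem:walk_vs_tour} is a bit more than the paper spells out, but it is exactly the content implicit in the paper's one-line remark.
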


Our next aim is to show the converse of Proposition \ref{prop:bernardi->jaeger}, namely that each Jaeger tree is obtained as the outcome of the Bernardi process on some hypertree. Until we finish proving this (in Theorem \ref{t:J-trees_are_B-trees}) we will continue to rely on the notions of tour and current node-edge pair of Subsection \ref{subsec:Bernardi_for_graphs}, which apply to all spanning trees. First let us establish some useful properties of Jaeger trees. 

By the \emph{reverse} of a ribbon structure, we mean the structure where all cyclic orders are turned opposite. This is equivalent to reversing the orientation of the ribbon surface. Below we consider the reversed ribbon structure with the same base point, i.e., the same boundary point of the ribbon surface.

\begin{lemma}\label{l:emerald_tree_is_also_violet}
A $V$-cut Jaeger tree with base node $b_0$ and base edge $b_0b_1$ is an $E$-cut Jaeger tree for the reversed ribbon structure, base node $b_0$, and base edge $b_0b_1^-$ (where the latter is meant in the original ribbon structure).
\end{lemma}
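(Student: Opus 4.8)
The plan is to compare the tour of $T$ with respect to the original ribbon structure (with base edge $b_0b_1$) and the tour of the same tree $T$ with respect to the reversed ribbon structure (with base edge $b_0b_1^-$), and show that the second tour is, essentially, the first one run backwards. Concretely, I would first fix notation: write $\rho$ for the original ribbon structure and $\rho^{\mathrm{op}}$ for its reverse, so that $xy^+_{\rho^{\mathrm{op}}} = xy^-_\rho$ and $xy^-_{\rho^{\mathrm{op}}} = xy^+_\rho$ for every edge $xy$ and every endpoint $x$. The key observation is the local one: in the tour rule, the pair following $(x,xy)$ depends only on whether $xy\in T$ and on the cyclic order at the relevant node. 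If $xy\notin T$, the original tour moves $(x,xy)\mapsto(x,xy^+_\rho)$, so under $\rho^{\mathrm{op}}$ the move $(x,xy)\mapsto(x,xy^-_\rho)$ is exactly the inverse of a skip-step of $\rho$; if $xy\in T$, the original tour moves $(x,xy)\mapsto(y,yx^+_\rho)$, and one checks that $(y,yx^+_\rho)\mapsto(x,xy)$ under $\rho^{\mathrm{op}}$ is again an inverse traversal-step. So traversing with $\rho^{\mathrm{op}}$ is the step-by-step reversal of traversing with $\rho$.

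Next I would pin down the base point. The tour of $T$ under $\rho$ with base edge $b_0b_1$ starts at $(b_0, b_0b_1)$ and, by Lemma \ref{l:T-tour_cyclic_perm}, visits every incident node-edge pair exactly once before it would return to $(b_0,b_0b_1)$. The pair that is visited \emph{last} in this tour is precisely the pair whose successor (under $\rho$) is $(b_0,b_0b_1)$ — and that successor relation means the last pair is either $(b_0, b_0b_1^-)$ (if $b_0b_1^-\notin T$, reached via a skip at $b_0$) or, if $b_0b_1^-\in T$, the pair $(b_2, b_2b_0)$ where $b_0b_2 = $ the $T$-edge at $b_0$ immediately clockwise of $b_0b_1$; in the relevant bookkeeping it is the pair $(b_0,b_0b_1^-)$ that governs where the reversed tour should begin. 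Since the reversed tour is the original tour read from last pair to first pair, starting it at $(b_0,b_0b_1^-)$ (which corresponds exactly to moving the base point to the boundary arc running into $b_0b_1$ from $b_0b_1^-$ the other way — i.e.\ the same boundary point, as Remark \ref{rem:surface} makes precise) produces the reverse sequence of the $\rho$-tour of $T$.

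The conclusion then follows from the definition of Jaeger tree. For a non-edge $\varepsilon = xy$ of $T$, the tour under $\rho$ skips it twice, once at $x$ and once at $y$; "$T$ is a $V$-cut Jaeger tree" says the first of these two skips is at the violet endpoint. Reversing the tour reverses the temporal order of these two skip-events, so under $\rho^{\mathrm{op}}$ (with the matched base point) the first skip of $\varepsilon$ is at the emerald endpoint — for every non-edge $\varepsilon$ simultaneously. That is exactly the statement that $T$ is an $E$-cut Jaeger tree for $\rho^{\mathrm{op}}$, base node $b_0$, base edge $b_0b_1^-$.

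I expect the main obstacle to be the base-edge bookkeeping: verifying that "reverse the ribbon structure and replace $b_0b_1$ by $b_0b_1^-$" really does correspond to "run the same tour backwards" requires care about where exactly the tour starts and stops, particularly in the boundary cases where $b_0b_1$ or $b_0b_1^-$ lies in $T$, or where $b_0$ has low degree. The ribbon-surface picture of Remark \ref{rem:surface} is the cleanest way to see this — the tour is literally a walk along the oriented boundary of the surface, reversing the orientation reverses the walk, and the base point is unchanged — but turning that topological picture into the combinatorial identity $xy^+_{\rho^{\mathrm{op}}} = xy^-_\rho$ plus the correct starting pair is where the real checking lies. Everything else is a direct unwinding of definitions.
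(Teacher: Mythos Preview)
Your overall strategy---reversing the ribbon structure reverses the tour, hence swaps the order of the two visits to each non-edge---is exactly the paper's approach. The paper's proof is a three-sentence version of this same idea.

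However, your local verification is wrong. You assert that if $xy\in T$, then under $\rho^{\mathrm{op}}$ one has $(y,yx^+_\rho)\mapsto(x,xy)$. But the $\rho^{\mathrm{op}}$-step from $(y,yx^+_\rho)$ is governed by whether $yx^+_\rho$ lies in $T$, not by whether $xy$ does. If $yx^+_\rho\notin T$, the $\rho^{\mathrm{op}}$-tour skips to $(y,(yx^+_\rho)^-_\rho)=(y,yx)$, not to $(x,xy)$; if $yx^+_\rho\in T$, it traverses to the other end of $yx^+_\rho$, again not to $(x,xy)$ in general. So the tour-step map for $\rho^{\mathrm{op}}$ is \emph{not} the inverse of the tour-step map for $\rho$ on node--edge pairs, and the $\rho^{\mathrm{op}}$-tour is \emph{not} literally the reverse of the $\rho$-tour as a sequence of such pairs. (A small example---a star with one extra non-tree edge---already exhibits this.)

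What is true, and what suffices for the lemma, is the weaker statement that for each non-edge $\varepsilon=xy$, the relative order of the two visits $(x,\varepsilon)$ and $(y,\varepsilon)$ is reversed. One clean way to see this is the ribbon-surface picture you mention: each visit to a non-edge corresponds to crossing one of its two short sides, and these two physical events occur in opposite order when the boundary walk is reversed; the point is that in \emph{both} tours the pair $(x,\varepsilon)$ is attached to the short-side crossing at $x$ (even though for tree edges the labelling shifts). Alternatively, you can prove the precise relationship: the $\rho^{\mathrm{op}}$-tour equals the reverse of the $\rho$-tour after swapping, for each tree-edge occurrence $(x,\varepsilon)$, the node $x$ with the other endpoint of $\varepsilon$; non-edge occurrences are unchanged, so their order does reverse. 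Either route closes the gap; your current ``one checks'' does not.
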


\begin{proof} 
Let $T$ be a spanning tree of $\bip\HH$. In the tour of $T$, each non-edge $\varepsilon$ of $T$ is visited (i.e., is current) twice, once at each endpoint. The tree $T$ is $V$-cut if and only if the violet endpoint is always reached (i.e., forms a current pair with $\varepsilon$) before the emerald one. For the reversed ribbon structure, the order of the two occurrences of $\varepsilon$ in the tour becomes opposite. Hence $T$ is $V$-cut for the original structure if and only if it is $E$-cut for the reversed structure.
\end{proof}

\begin{ex} 
The spanning tree on the left of Figure \ref{fig:Jaeger-trees} is an $E$-cut Jaeger tree for the ribbon structure induced by the clockwise (negative) orientation of the plane, with base node $v_0$ with base edge $v_0e_2$.
\end{ex}

Note that by symmetry, an $E$-cut Jaeger tree with base node $b_0$ and base edge $b_0b_1$ is also a $V$-cut Jaeger tree with base node $b_0$, base edge $b_0b_1^-$ and the reversed ribbon structure. As the reversal of the ribbon structure is an involution, we deduce the following.

\begin{lemma}\label{c:set_of_emerald_trees_eq_violet}
The set of $E$-cut Jaeger trees with base node $b_0$ and base edge $b_0b_1$ is equal to the set of $V$-cut Jaeger trees with base node $b_0$, base edge $b_0b_1^-$, and the reversed ribbon structure.
\end{lemma}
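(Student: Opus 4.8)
The plan is to derive the statement purely formally from Lemma~\ref{l:emerald_tree_is_also_violet}, the remark immediately following it, and the fact that reversing a ribbon structure is an involution; no new combinatorial input should be needed. (Indeed, the proof of Lemma~\ref{l:emerald_tree_is_also_violet} actually establishes a biconditional — ``$V$-cut for $R$'' $\iff$ ``$E$-cut for $\overline R$'' with the appropriate base edge — so in principle the colour-swapped version of that argument gives the claim directly; below I instead organize things around the two inclusions and the involution, matching the way the lemma is positioned in the text.)

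First I would fix notation. Let $R$ denote the given ribbon structure and $\overline R$ its reverse, and recall that $R$ and $\overline R$ have opposite cyclic orders at every node, so that the edge preceding an edge $\varepsilon$ at a node $x$ in $\overline R$ is exactly the edge following $\varepsilon$ at $x$ in $R$. Write $\mathcal J_E(R;b_0,b_0b_1)$ and $\mathcal J_V(R;b_0,b_0b_1)$ for the sets of $E$-cut and $V$-cut Jaeger trees attached to the data $(R,b_0,b_0b_1)$. The first inclusion, $\mathcal J_E(R;b_0,b_0b_1)\subseteq\mathcal J_V(\overline R;b_0,b_0b_1^-)$, is precisely the remark following Lemma~\ref{l:emerald_tree_is_also_violet}: reversing $R$ reverses the tour of any spanning tree $T$, hence swaps the order of the two occurrences of each non-edge of $T$, so ``$E$-cut for $R$ with base edge $b_0b_1$'' turns into ``$V$-cut for $\overline R$ with base edge $b_0b_1^-$'', the $\pm$-shift of the base edge being dictated by where the base point falls relative to the two edges straddling it, exactly as in the proof of that lemma.

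For the reverse inclusion I would apply Lemma~\ref{l:emerald_tree_is_also_violet} itself, now taking $\overline R$ to play the role of the ``original'' structure. A tree in $\mathcal J_V(\overline R;b_0,\beta)$ with $\beta=b_0b_1^-$ (the preceding edge meant in $R$) is then, by that lemma, an $E$-cut Jaeger tree for the reversed structure $\overline{\overline R}=R$, base node $b_0$, and base edge equal to the edge preceding $\beta$ at $b_0$ in $\overline R$. But that edge is the edge following $\beta$ at $b_0$ in $R$, namely $b_0b_1$, since $\beta=b_0b_1^-$ sits immediately before $b_0b_1$ in the cyclic order of $R$. Hence the tree lies in $\mathcal J_E(R;b_0,b_0b_1)$, which gives $\mathcal J_V(\overline R;b_0,b_0b_1^-)\subseteq\mathcal J_E(R;b_0,b_0b_1)$ and, combined with the previous paragraph, the claimed equality.

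The only point requiring care is the bookkeeping of the base edge under the two successive reversals: one must check that applying the ``preceding edge in $\overline R$'' operation after the ``$b_0b_1\mapsto b_0b_1^-$ in $R$'' operation returns $b_0b_1$. This is immediate from the opposite-cyclic-order relation between $R$ and $\overline R$, so I do not anticipate a real obstacle — the lemma is a formal corollary of Lemma~\ref{l:emerald_tree_is_also_violet} together with the involutivity of ribbon reversal.
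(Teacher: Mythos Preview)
Your proposal is correct and follows essentially the same approach as the paper: one inclusion comes from the colour-swapped version of Lemma~\ref{l:emerald_tree_is_also_violet} (the remark following it), and the other from applying Lemma~\ref{l:emerald_tree_is_also_violet} to the reversed structure together with the involutivity of reversal, with the base-edge bookkeeping handled exactly as you describe.
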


\begin{definition}
For a $V$-cut Jaeger tree $T$, we call the tour of $T$ (with respect to the original ribbon structure) the \emph{violet tour of $T$}. We call the tour of $T$ with base point $b_0$, base edge $b_0b_1^-$ and the reversed ribbon structure the \emph{emerald tour of $T$}.
\end{definition}

By Lemma \ref{l:emerald_tree_is_also_violet}, for a $V$-cut Jaeger tree, in the emerald tour, each non-edge is cut at its emerald endpoint. The violet and the emerald tours of a Jaeger tree both induce orderings of the edges of $\bip \HH$, as follows. 

\begin{definition}\label{def:T-order}
For a $V$-cut Jaeger tree $T$, let the \emph{violet $T$-order of the edges of $\bip \HH$} be the order in which the edges of $\bip \HH$ appear in the violet $T$-tour with their violet endpoint as current node. Let us denote this ordering by $<_{T,V}$. That is, $ev <_{T,V} e'v'$ if $(v,ev)<_T (v', e'v')$, where $<_T$ is as in subsection \ref{subsec:Bernardi_for_graphs}.

Similarly, let the \emph{emerald $T$-order of the edges of $\bip \HH$} be the order in which the edges of $\bip \HH$ appear in the emerald $T$-tour with their emerald endpoint as current node. Let us denote this ordering by $<_{T,E}$; i.e., $ev <_{T,E} e'v'$ if $(e,ev)<_T (e', e'v')$, where now $<_T$ is defined by the reversed ribbon structure.
\end{definition}

\begin{definition}\label{def:induced_order}
Let $T\subset\bip\HH$ be a $V$-cut Jaeger tree. The \emph{order induced on $E$ by the emerald $T$-order} of the edges of $\bip \HH$, or simply the \emph{emerald $T$-order on $E$}, is by the smallest of the incident edges. That is, $e_1$ is less than $e_2$ if there is an edge, incident to the node $e_1$, that is smaller in the emerald $T$-order than any edge incident to $e_2$. There is a similar order of the nodes in $V$, induced by the violet $T$-order of the edges.
\end{definition}

\begin{ex}
For the Jaeger tree on the left side of Figure \ref{fig:Jaeger-trees}, the violet $T$-order is $v_0e_0<_{T,V}v_1e_1<_{T,V}v_1e_3<_{T,V}v_1e_0<_{T,V}v_0e_3<_{T,V}v_2e_1<_{T,V}v_2e_2<_{T,V}v_2e_3<_{T,V}v_0e_2$, while the emerald $T$-order is $e_2v_2<_{T,E}e_2v_0<_{T,E}e_3v_2<_{T,E}e_1v_1<_{T,E}e_1v_2<_{T,E}e_3v_1<_{T,E}e_3v_0<_{T,E}e_0v_1<_{T,E}e_0v_0$. The former induces the order $v_0<v_1<v_2$ of the violet nodes, and the latter induces the order $e_2<e_3<e_1<e_0$ of the emerald nodes.
\end{ex}

\begin{remark}\label{f:orders}
The previous two definitions are almost superfluous, in that they describe orderings that we have already seen --- we just cannot prove this yet. But once we know that our Jaeger tree is an outcome of the Bernardi process, in fact in two different senses by Lemma \ref{c:set_of_emerald_trees_eq_violet}, Lemma \ref{lem:walk_vs_tour} will imply that the orders of Definition \ref{def:T-order} are the same in which the edges of $\bip\HH$ become current in those processes.

More precisely, if a $V$-cut Jaeger tree $T$ is the outcome of the (ht:$V$, cut:$V$) or the (ht:$E$, cut:$V$) Bernardi process, then in the run producing $T$, the edges of $\bip \HH$ become current edges in the violet $T$-order. 
Also, if an $E$-cut Jaeger tree $T$ is the outcome of the (ht:$E$, cut:$E$) or the (ht:$V$, cut:$E$) Bernardi process, then in the run producing $T$, the edges of $\bip \HH$ become current edges in the emerald $T$-order.

Similarly, we will eventually see that the orders of Definition \ref{def:induced_order} are instances of \eqref{eq:hyperedge_order} in Definition \ref{def:hyperactivity}.
\end{remark}

\subsection{Fundamental cuts} 

Fundamental cuts of Jaeger trees have a nice interplay with the orderings of the edges induced by the trees. The following, rather intuitive lemma is proved in \cite{Bernardi_Tutte}. We specialize it to our case and translate it into our notation.

\begin{lemma}\label{l:parents_in_T-order} \cite[Lemma 5]{Bernardi_Tutte}
	Let $T$ be a spanning tree of $\bip \HH$ and $xy\in T$. Let $T_0$ and $T_1$ be the two subtrees of $T-xy$ so that $T_0$ contains the base node. If $(x,xy) <_T (y,xy)$, then $x$ is in the node set of $T_0$. Moreover, 
\[\{(z,zw)\mid (x,xy) <_T (z,zw) \leq_T (y,xy)\}=\{(z,zw)\mid z\text{ is in the node set of } T_1\}.\]
\end{lemma}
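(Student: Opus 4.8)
This is Bernardi's lemma, and I would adapt its proof to the bipartite setting of $\bip\HH$ as follows. The key observation is that, if one ignores the non-edges of $T$, the tour of $T$ is exactly a depth-first (Euler) traversal of the tree $T$ rooted at $b_0$, dictated by the ribbon structure, and that $xy$ is the \emph{only} edge of $T$ joining the component $T_0$ to the component $T_1$. The plan is to establish a single invariant describing which side of this cut the current node lies on, and then to read off both assertions from it.

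The invariant I would prove, by induction along the tour, is: at every moment up to and including the one immediately before $xy$ becomes current for the first time, the current node lies in the node set of $T_0$; at every moment strictly after that first traversal of $xy$ and up to the second traversal of $xy$, the current node lies in the node set of $T_1$; and at every moment after the second traversal, the current node lies in the node set of $T_0$ again. The induction step is forced by the tour rules: skipping a non-edge leaves the current node unchanged, and traversing a tree edge $uv\neq xy$ cannot move the current node from one of $T_0,T_1$ to the other, since $xy$ is the unique edge of $T$ across the cut. Finally, by Lemma~\ref{l:T-tour_cyclic_perm} the edge $xy$ is current exactly twice, once with each endpoint, i.e.\ it is traversed exactly twice, once in each direction; so once both traversals have taken place, no further change of side is possible.

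Given the invariant, the first assertion is immediate: just before the first traversal of $xy$ the current node is in $T_0$, so the earlier of the two occurrences of $xy$ as current edge has the $T_0$-endpoint of $xy$ as its current node, and the hypothesis $(x,xy)<_T(y,xy)$ then forces $x$ to be that endpoint, i.e.\ $x$ is in the node set of $T_0$ (and the two traversals of $xy$ run $x\to y$ and then $y\to x$). For the ``moreover'' part, the set $\{(z,zw)\mid (x,xy)<_T(z,zw)\leq_T(y,xy)\}$ consists precisely of the current pairs occurring strictly after the first traversal of $xy$ and up to and including the second one, i.e.\ of the portion of the tour spent inside $T_1$; by the invariant each such pair has its current node in the node set of $T_1$, giving one inclusion. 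Conversely, each pair $(z,zw)$ with $z$ in the node set of $T_1$ occurs exactly once in the whole tour (Lemma~\ref{l:T-tour_cyclic_perm} again), while among the pairs $\leq_T(x,xy)$ and the pairs $>_T(y,xy)$ the current node lies in $T_0$ by the invariant; hence every such pair must lie in the range above, and we get equality.

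The only slightly delicate point is the step in the proof of the invariant where we use that the two occurrences of $xy$ as current edge are exactly its two tree-edge traversals, with nothing in between that could flip the side — but this is precisely what Lemma~\ref{l:T-tour_cyclic_perm} guarantees. Beyond that, the argument is just careful bookkeeping along the tour.
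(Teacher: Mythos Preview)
Your argument is correct. Note, however, that the paper does not actually prove this lemma: it is stated with a citation to \cite[Lemma 5]{Bernardi_Tutte} and introduced as ``proved in \cite{Bernardi_Tutte}'' and merely ``specialized to our case and translated into our notation.'' So there is no paper-proof to compare against; your self-contained argument via the side-of-the-cut invariant along the tour is the standard one and is sound.
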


For a subgraph of $\bip\HH$, we let its \emph{base component} be the connected component that contains the base node. See Figure \ref{fig:fundamental_cut} for an example.

\begin{lemma}\label{l:char_Jaeger_cuts}
	Let $T$ be a $V$-cut Jaeger tree, $\varepsilon \in T$,
	and $\varepsilon_1, \varepsilon_2$ edges in the fundamental cut $C^*(T,\varepsilon)$. If $\varepsilon_1$ has its violet endpoint in the base component of $T-\varepsilon$ and $\varepsilon_2$ has its emerald endpoint in the base component of $T-\varepsilon$, then 
\begin{enumerate}[label=(\roman*)]
	\item \label{egyegy} $\varepsilon_1 <_{T,V} \varepsilon_2$, and
	\item \label{kettoketto} $\varepsilon_1 \leq_{T,V} \varepsilon$.
\end{enumerate}
\end{lemma}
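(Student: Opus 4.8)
\emph{Proof plan.} The plan is to use Lemma~\ref{l:parents_in_T-order} to cut the tour of $T$ into three consecutive stretches, and then merely locate, inside these stretches, the three node--edge pairs that govern the positions of $\varepsilon_1$, $\varepsilon_2$ and $\varepsilon$ in the order $<_{T,V}$.

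First I would fix notation: write $\varepsilon=xy$ and let $T_0,T_1$ be the two components of $T-\varepsilon$ with $b_0\in T_0$. After relabelling we may assume $(x,\varepsilon)<_T(y,\varepsilon)$, so Lemma~\ref{l:parents_in_T-order} yields $x\in T_0$, $y\in T_1$, and identifies the set $\mathcal I:=\{(z,zw):(x,\varepsilon)<_T(z,zw)\le_T(y,\varepsilon)\}$ as exactly the set of node--edge pairs whose node lies in $T_1$. Hence, in $<_T$-order, the tour breaks into the stretch $\mathcal B:=\{(z,zw):(z,zw)\le_T(x,\varepsilon)\}$, then $\mathcal I$, then $\mathcal A:=\{(z,zw):(y,\varepsilon)<_T(z,zw)\}$, where every pair in $\mathcal B\cup\mathcal A$ has its node in $T_0$; note that $(x,\varepsilon)$ is the $<_T$-maximum of $\mathcal B$ and that $(y,\varepsilon)\in\mathcal I$.

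Next I would pin down three pairs. Write $\varepsilon_1=p_1q_1$ and $\varepsilon_2=p_2q_2$ with the $p_i$ violet and the $q_i$ emerald; the hypotheses give $p_1,q_2\in T_0$, whence (as $\varepsilon_1,\varepsilon_2\in C^*(T,\varepsilon)$) also $q_1,p_2\in T_1$. Then $(p_2,\varepsilon_2)\in\mathcal I$ since $p_2\in T_1$. The key claim is that $(p_1,\varepsilon_1)\in\mathcal B$. If $\varepsilon_1=\varepsilon$ this is immediate: then $p_1=x$ and $(p_1,\varepsilon_1)=(x,\varepsilon)=\max_{<_T}\mathcal B$. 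If $\varepsilon_1\ne\varepsilon$, then $\varepsilon_1\notin T$ (the only tree edge of $C^*(T,\varepsilon)$ is $\varepsilon$ itself, since any other tree edge lies in a single component of $T-\varepsilon$), so the $V$-cut property of $T$ gives $(p_1,\varepsilon_1)<_T(q_1,\varepsilon_1)$; as $q_1\in T_1$ the latter pair lies in $\mathcal I$, and as $p_1\in T_0$ the former lies in $\mathcal B\cup\mathcal A$ --- being strictly below an element of $\mathcal I$, it cannot lie in $\mathcal A$, so it lies in $\mathcal B$. I expect this last point --- recognising that the $V$-cut hypothesis is precisely what forces $(p_1,\varepsilon_1)$ into $\mathcal B$ rather than $\mathcal A$, while keeping the boundary case $\varepsilon_1=\varepsilon$ straight --- to be the only genuinely delicate step; notably, no hypothesis on $\varepsilon_2$ beyond the location of its endpoints is used.

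With all three pairs located, both conclusions fall out. For~\ref{egyegy}, $\varepsilon_1<_{T,V}\varepsilon_2$ means $(p_1,\varepsilon_1)<_T(p_2,\varepsilon_2)$, which holds because $(p_1,\varepsilon_1)\in\mathcal B$, $(p_2,\varepsilon_2)\in\mathcal I$, and all of $\mathcal B$ precedes all of $\mathcal I$. For~\ref{kettoketto}, let $p$ be the violet endpoint of $\varepsilon$, so $p\in\{x,y\}$: if $p=x$ then $(p,\varepsilon)=\max_{<_T}\mathcal B\ge_T(p_1,\varepsilon_1)$, while if $p=y$ then $(p,\varepsilon)\in\mathcal I$, which comes after all of $\mathcal B$, so again $(p_1,\varepsilon_1)<_T(p,\varepsilon)$; either way $(p_1,\varepsilon_1)\le_T(p,\varepsilon)$, i.e.\ $\varepsilon_1\le_{T,V}\varepsilon$. (Along the way one also notes that $\varepsilon_1=\varepsilon_2$ is impossible --- such an edge would have both endpoints in $T_0$ and so could not lie in $C^*(T,\varepsilon)$ --- and that the case $\varepsilon_2=\varepsilon$, which occurs when $x$ is emerald, causes no trouble.)
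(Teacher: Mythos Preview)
Your proof is correct and follows essentially the same route as the paper's: both arguments use Lemma~\ref{l:parents_in_T-order} to place the pair $(q_1,\varepsilon_1)$ (resp.\ $(p_2,\varepsilon_2)$) in the ``middle'' stretch of the tour, then invoke the $V$-cut property to force $(p_1,\varepsilon_1)$ into the ``before'' stretch (with the boundary case $\varepsilon_1=\varepsilon$ handled separately), from which both conclusions follow. Your explicit partition into $\mathcal B,\mathcal I,\mathcal A$ is a clean organizational device, but the underlying argument is the same.
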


In other words, the tour of $T$ first visits those edges of $C^*(T,\varepsilon)$ that have their violet endpoint in the base component of $T-\varepsilon$, the last one of which, and in that case the only one not to be cut, may be $\varepsilon$. If $\varepsilon$ has its emerald endpoint in the base component of $T-\varepsilon$, then it is the first such edge to be visited by the tour, but because it is traversed from the emerald direction, it (typically) does not become smallest among them with respect to $<_{T,V}$. (In fact, somewhat counterintuitively, it will be the largest.) In both cases, by traversing $\varepsilon$ the tour leaves the base component and visits, and cuts at their violet endpoints, the remaining edges of $C^*(T,\varepsilon)$. By the time the tour returns to the base component (by traversing $\varepsilon$ for the second time), all edges of $C^*(T,\varepsilon)$, save $\varepsilon$, have been cut.

\begin{figure}
	\begin{center}
		\begin{tikzpicture}[-,>=stealth',auto,scale=0.3,
		thick]
		\tikzstyle{c}=[{circle,draw,font=\sffamily\small}]
		\node[c,color=green,fill,label=left:{\small $b_1=e_0$}] (0) at (0, 0.1) {};
		\node[c,color=blue,fill,label=below:{\small $v_1$}] (1) at (3, 2) {};
		\node[c,color=blue,fill,label=left:{\small $b_0=v_0$}] (2) at (-3, 2) {};
		\node[c,color=green,fill,label=right:{\small $e_1$}] (3) at (3, 5) {};
		\node[c,color=green,fill, label=left:{\small $e_2$}] (4) at (-3, 5) {};
		\node[c,color=blue,fill, label=left:{\small $v_2$}] (5) at (0, 6.9) {};
		\node[c,color=green,fill, label=below:{\small $e_3$}] (6) at (0, 3.55) {};
		\path[every node/.style={font=\sffamily\small}, line width=0.2mm]
		(1) edge node {} (6);
		\path[every node/.style={font=\sffamily\small}, line width=0.8mm]
		(1) edge node {} (0)
		(2) edge node {} (4)
		(0) edge node {} (2)
		(5) edge node {} (3)
		(6) edge node {} (2);
		\path[every node/.style={font=\sffamily\small}, line width=0.8mm, color=red]
		(5) edge node {} (6);
		\path[every node/.style={font=\sffamily\small}, line width=0.2mm, color=red]
		(1) edge node {} (3)
		(4) edge node {} (5);
		\end{tikzpicture}
\end{center}
\caption{
With respect to the $V$-cut Jaeger tree $T$ of Figure \ref{fig:Jaeger-trees},
the red edges comprise the fundamental cut of $v_2e_3$. The base component of $T-v_2e_3$ is the subtree $\{\,v_0e_0, e_0v_1, v_0e_3, v_0e_2\,\}$.}
\label{fig:fundamental_cut}
\end{figure}

\begin{proof}[Proof of Lemma \ref{l:char_Jaeger_cuts}]
The subgraph $T-\varepsilon$ is a forest made up of the base component $T_0$ and another component $T_1$. Let $\varepsilon=xy$ and suppose that $(x,\varepsilon)<_T (y,\varepsilon)$. For $i=1,2$, let also $\varepsilon_i=v_ie_i\in C^*(T,\varepsilon)$, where $v_i\in V$ and $e_i\in E$. Since $\varepsilon_1$ has its violet endpoint $v_1$ in $T_0$, its emerald endpoint $e_1$ is from the node set of $T_1$. Hence Lemma \ref{l:parents_in_T-order} implies $(x,\varepsilon)<_T (e_1, \varepsilon_1) \leq_T (y,\varepsilon)$. As the ordering is induced by the violet tour of $T$, we either have $\varepsilon_1=\varepsilon$ (and then $x=v_1$) or the edge $\varepsilon_1$ is cut at its violet endpoint, i.e., $(v_1,\varepsilon_1)<_T (e_1,\varepsilon_1)$. Because $v_1$ is not in the node set of $T_1$, we must then have $(v_1,\varepsilon_1) \leq_T (x,\varepsilon)$. From this, \ref{kettoketto} immediately follows, for no matter if $x$ or $y$ is the violet endpoint of $\varepsilon$, we have $(v_1,\varepsilon_1) \leq_T (x,\varepsilon)<_T (y, \varepsilon)$.
	
Since $\varepsilon_2$ has its emerald endpoint $e_2$ in $T_0$, its violet endpoint $v_2$ is from the node set of $T_1$. Hence $(x,\varepsilon)<_T (v_2, \varepsilon_2) \leq_T (y,\varepsilon)$. We conclude that $(v_1,\varepsilon_1)<_T (v_2,\varepsilon_2)$, implying $\varepsilon_1 <_{T,V} \varepsilon_2$.
\end{proof}

\begin{lemma} \label{l:subs_trees_cuts}
If the (violet) tours of the $V$-cut Jaeger trees $T$ and $T'$ coincide until $\varepsilon$ becomes current edge, furthermore $\varepsilon\in T$ but $\varepsilon\notin T'$, then $\varepsilon$ has its violet endpoint in the base component of $T-\varepsilon$. Moreover, for any $\varepsilon'\in T'$ such that $\varepsilon'\in C^*(T,\varepsilon)$, the emerald endpoint of $\varepsilon'$ lies in the base component of $T-\varepsilon$.	
\end{lemma}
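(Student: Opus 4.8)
I will work with the violet tour of $T$ (which coincides, up to the moment $\varepsilon$ becomes current, with the violet tour of $T'$), and argue about what the common initial segment of the two tours forces. Write $\varepsilon = xy$ with $x$ violet or emerald; I want to show that $\varepsilon$ is first visited from the violet direction, i.e., its violet endpoint is in the base component $T_0$ of $T-\varepsilon$.

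\smallskip

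\emph{Step 1: $\varepsilon$ is cut at its violet endpoint in the tour of $T'$.} Since $\varepsilon \notin T'$ and $T'$ is a $V$-cut Jaeger tree, the tour of $T'$ skips $\varepsilon$ for the first time at its violet endpoint. But the tours of $T$ and $T'$ agree until $\varepsilon$ becomes current, and the first time $\varepsilon$ becomes current is exactly the moment it is cut in $T'$. Hence in the tour of $T$, at that same moment, $\varepsilon$ is current with its violet endpoint as current node. By Lemma \ref{l:parents_in_T-order} applied to $T$ and the edge $\varepsilon \in T$ (reading the lemma in the direction ``the endpoint that comes first is in $T_0$''), the violet endpoint of $\varepsilon$ lies in the node set of $T_0$, the base component of $T-\varepsilon$. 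This gives the first assertion.

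\smallskip

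\emph{Step 2: locating $\varepsilon'$.} Now let $\varepsilon' \in T'$ with $\varepsilon' \in C^*(T,\varepsilon)$; I must show the emerald endpoint of $\varepsilon'$ lies in $T_0$. Suppose not, so $\varepsilon'$ has its violet endpoint in $T_0$ and its emerald endpoint in $T_1$. By Lemma \ref{l:char_Jaeger_cuts}\,\ref{egyegy} (with $\varepsilon' = \varepsilon_1$, noting its violet endpoint is in the base component), together with \ref{kettoketto}, we get $\varepsilon' <_{T,V} \varepsilon$, i.e., $\varepsilon'$ is cut (at its violet endpoint) strictly before $\varepsilon$ becomes current in the tour of $T$. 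Since the two tours agree up to that moment, $\varepsilon'$ is also cut, at its violet endpoint, in the tour of $T'$ — that is, $\varepsilon'$ is skipped for the first time at its violet endpoint in the tour of $T'$. But a $V$-cut Jaeger tree only \emph{cuts} (skips for the first time) edges that are \emph{not} in the tree, whereas $\varepsilon' \in T'$, so $\varepsilon'$ is traversed, never cut, in the tour of $T'$. This contradiction shows the emerald endpoint of $\varepsilon'$ must lie in $T_0$.

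\smallskip

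\textbf{Main obstacle.} The delicate point is Step 2: one must be careful that ``$\varepsilon'$ becomes current (with its violet endpoint) before $\varepsilon$ becomes current'' in the tour of $T$ genuinely transfers to the tour of $T'$. This requires that the event ``$\varepsilon$ becomes current'' marks exactly the end of the common initial segment and that $\varepsilon'$'s first visit at its violet endpoint falls strictly inside that segment — which is precisely what $\varepsilon' <_{T,V}\varepsilon$ encodes via Definition \ref{def:T-order}. Once this is set up correctly, the contradiction (a tree edge of $T'$ being cut) is immediate. A secondary subtlety is checking the hypotheses of Lemma \ref{l:char_Jaeger_cuts} really apply, namely that $\varepsilon$ indeed has its violet endpoint in the base component (established in Step 1) so that the roles of $\varepsilon_1,\varepsilon_2$ in that lemma are as claimed.
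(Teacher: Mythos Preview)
Your proof is correct and follows essentially the same approach as the paper's. Both arguments use the $V$-cut property of $T'$ to pin down the violet endpoint of $\varepsilon$ in the base component, and then invoke part~\ref{kettoketto} of Lemma~\ref{l:char_Jaeger_cuts} to show that any edge of $C^*(T,\varepsilon)\setminus\{\varepsilon\}$ with violet endpoint in the base component was already skipped in the common initial segment of the tours, hence cannot lie in $T'$. The only cosmetic differences are that the paper argues directly rather than by contradiction in Step~2, and that your citation of part~\ref{egyegy} is superfluous (part~\ref{kettoketto} alone gives $\varepsilon' \leq_{T,V} \varepsilon$, with strictness following from $\varepsilon' \in T'$, $\varepsilon \notin T'$, so $\varepsilon' \neq \varepsilon$); you might also make explicit the trivial fact $\varepsilon' \notin T$ (since $\varepsilon$ is the unique tree edge in its own base cut) before saying $\varepsilon'$ is ``cut'' in the tour of $T$.
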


\begin{proof}
Up to the time when the two tours diverge, $\varepsilon$ has been neither cut (for otherwise it could not be an edge in $T$) nor traversed (because then it would be an edge in $T'$). So this is when the tour of $T'$ will cut it and since $T'$ is a $V$-cut Jaeger tree, the current node $v$ has to be violet. As $v$ is connected to $b_0$ in $T$ by a path not traversing $\varepsilon$, the first claim follows.

From part \ref{kettoketto} of Lemma \ref{l:char_Jaeger_cuts} we know that all the edges in $C^*(T,\varepsilon)-\varepsilon$ that have their violet endpoints in the base component had already been current edges, in conjunction with their violet endpoints, in the violet tour of $T$ before reaching $\varepsilon$, and they were cut. As the violet tours of $T$ and $T'$ coincide until reaching $\varepsilon$, these edges are not included in $T'$, either. Since we also have $\varepsilon\notin T'$, all the edges of $T'$ from $C^*(T,\varepsilon)$ have their emerald endpoints in the base component of $T-\varepsilon$.
\end{proof}

\subsection{All Jaeger trees are outcomes of the Bernardi process}
\label{ssec:Jaeger->Bernardi}

In this subsection we prove that each hypertree is realized by unique $V$-cut and $E$-cut Jaeger trees, implying that Jaeger trees are exactly the trees obtained as outcomes of the Bernardi process.

\begin{thm}
\label{t:hypertrees_unique_repr_by_J_trees}
Let $\HH=(V,E)$ be a connected hypergraph with a ribbon structure on $\bip\HH$, a base node, and a base edge fixed. For each hypertree $\mathbf f\in B_V$ there is exactly one $V$-cut Jaeger tree $T$ such that $\mathbf f=\mathbf f_V(T)$.
\end{thm}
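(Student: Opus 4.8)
The plan is to treat existence and uniqueness separately, and to first trade the $E$-cut statement for a $V$-cut one so that Lemmas \ref{l:char_Jaeger_cuts} and \ref{l:subs_trees_cuts} apply as stated. By Lemma \ref{c:set_of_emerald_trees_eq_violet} the set of $E$-cut Jaeger trees realizing $\mathbf f$, for the given ribbon structure with base node $b_0$ and base edge $b_0b_1$, coincides with the set of $V$-cut Jaeger trees realizing $\mathbf f$ for the reversed ribbon structure with base node $b_0$ and base edge $b_0b_1^-$. Since realizing $\mathbf f$ depends only on the tree, and the reversed structure with the shifted base edge is again legitimate ribbon data, it suffices to prove that \emph{for every ribbon structure and base point on $\bip\HH$, each $\mathbf f\in B_E$ is realized by exactly one $V$-cut Jaeger tree.} For existence I would simply invoke the (ht:$E$, cut:$V$) Bernardi process on $\mathbf f$: by Theorem \ref{thm:Bernardi_well_def} its output is a spanning tree realizing $\mathbf f$, and by Proposition \ref{prop:bernardi->jaeger} that tree is a $V$-cut Jaeger tree.

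For uniqueness I would argue by contradiction. Suppose $T\neq T'$ are $V$-cut Jaeger trees both realizing $\mathbf f$. Since the tour of a spanning tree traverses exactly its own edges, the tours of $T$ and $T'$ are different sequences; let $\varepsilon$ be the edge current at their first point of divergence. As the tour's branching at the current edge depends only on whether that edge lies in the tree, $\varepsilon$ belongs to exactly one of $T,T'$, say $\varepsilon\in T$ and $\varepsilon\notin T'$; and $\varepsilon$ cannot have been current earlier in either tour, since the tours agreed up to this moment and a common earlier occurrence would force either $\varepsilon\notin T'$ or $\varepsilon\in T'$ into the wrong configuration. Thus the hypotheses of Lemma \ref{l:subs_trees_cuts} hold, and it gives: with $T_0$ the base component of $T-\varepsilon$ and $T_1$ the other component, the violet endpoint of $\varepsilon$ lies in $T_0$ (so its emerald endpoint is in $T_1$), and every edge of $T'$ in the base cut $C^*(T,\varepsilon)$ has its emerald endpoint in $T_0$.

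The contradiction would then come from counting emerald degrees inside $T_1$. Let $E_1,V_1$ be the emerald, resp.\ violet nodes in the node set of $T_1$, so that $T_1$ has $|E_1|+|V_1|$ nodes and $|E_1|+|V_1|-1$ edges, and note $E_1$ contains the emerald endpoint of $\varepsilon$. In $T$, the edges incident to $E_1$ are exactly the edges of the subtree $T_1$ together with $\varepsilon$ (the unique edge of $T$ in $C^*(T,\varepsilon)$, which is incident to $E_1$), so $\sum_{e\in E_1}d_T(e)=|E_1|+|V_1|$. Since $T$ and $T'$ realize the same hypertree on $E$, we have $d_{T'}(e)=d_T(e)$ for every $e\in E$, hence $\sum_{e\in E_1}d_{T'}(e)=|E_1|+|V_1|$. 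But by the second assertion of Lemma \ref{l:subs_trees_cuts} no edge of $T'$ in $C^*(T,\varepsilon)$ is incident to $E_1$, so every edge of $T'$ incident to $E_1$ has both endpoints in $E_1\cup V_1$; therefore $\sum_{e\in E_1}d_{T'}(e)$ equals the number of edges of the subgraph of $T'$ induced on $E_1\cup V_1$, which is a forest and so has at most $|E_1|+|V_1|-1$ edges. The inequality $|E_1|+|V_1|\le|E_1|+|V_1|-1$ is absurd, so $T=T'$.

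I expect the only delicate point to be the bookkeeping around the first divergence — precisely, confirming that $\varepsilon$ is genuinely fresh at that moment so that Lemma \ref{l:subs_trees_cuts} is applicable; after that the degree count is immediate. It is worth stressing that the passage to the $V$-cut case is essential rather than cosmetic: for an $E$-cut tree the parallel count would run over the violet degrees of $T_1$, and two trees realizing the same hypertree on $E$ need not have the same violet degrees, so the same argument would not close.
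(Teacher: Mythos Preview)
Your proof is correct and follows essentially the same route as the paper's: both invoke the Bernardi process for existence, locate the first divergence of the two tours, apply Lemma~\ref{l:subs_trees_cuts}, and derive a contradiction by summing emerald degrees on one side of the base cut $C^*(T,\varepsilon)$. The only cosmetic differences are that the paper stays with $E$-cut trees (applying Lemma~\ref{l:subs_trees_cuts} with the colors swapped rather than passing through Lemma~\ref{c:set_of_emerald_trees_eq_violet}) and performs the count on the base component $T_0$ instead of $T_1$; in particular your closing remark that the $E$-cut route would not close is slightly off --- it does, provided one counts on $T_0$.
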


\begin{proof} We proved in Proposition \ref{p:B-trees_are_J-trees} that for each hypertree $\mathbf f\in B_V$, the (ht:$V$, cut:$V$) Bernardi process produces a $V$-cut Jaeger tree realizing $\mathbf f$. Hence it is enough to show that for each hypertree, there is at most one suitable Jaeger tree. Suppose for a contradiction that for some hypertree $\mathbf f\in B_V$ there are two $V$-cut Jaeger trees $T\neq T'$ such that $\mathbf f=\mathbf f_V(T)=\mathbf f_V(T')$. 
	
Consider the (violet) tours of $T$ and $T'$, and suppose that the two tours first differ when an edge $ve$ is included into $T$ but not included into $T'$. As $T$ and $T'$ are $V$-cut Jaeger trees, this means that the current node at this time is $v$. Cf.\ Lemma \ref{l:subs_trees_cuts}, which also says that each edge in $T'\cap C^*(T,ve)$ has its emerald endpoint in the base component of $T-ve$. 

Let $V_0$ be the set of violet nodes and $E_0$ be the set of emerald nodes in the base component of $T-ve$. We will compute $\sum_{u\in V_0}\mathbf f(u)$ twice, using the realizations $T$ and $T'$, respectively.

As edges of $T$ between elements of $V_0\cup E_0$ form a tree and other than those, $T$ only has one edge, $ve$, incident to an element of $V_0$, we have $\sum_{u\in V_0}\mathbf f(u)=(|V_0\cup E_0|-1)+1-|V_0|=|E_0|$. 

On the other hand, as $T'$ is a tree, it can have at most $|V_0\cup E_0|-1$ edges between elements of $V_0\cup E_0$. All other edges of $T'$ incident to $V_0$ belong to $C^*(T,ve)$ but we have already seen that there are no such edges. Hence this time we obtain $\sum_{u\in V_0}\mathbf f(u)\le (|V_0\cup E_0|-1)-|V_0|=|E_0|-1$, which is a contradiction.
\end{proof}

\begin{coroll}
\label{cor:V_hyper_by_emerald_J_repr}
Under the same assumptions as in Theorem \ref{t:hypertrees_unique_repr_by_J_trees}, for each hypertree $\mathbf f\in B_V$ there is exactly one $E$-cut Jaeger tree $T$ such that $\mathbf f=\mathbf f_V(T)$. Similarly, hypertrees in $B_E$ have unique $E$-cut and $V$-cut Jaeger tree representatives.
\end{coroll}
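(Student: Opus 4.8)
The plan is to deduce the three new uniqueness statements from Theorem~\ref{t:hypertrees_unique_repr_by_J_trees} using two symmetries that leave the ribbon graph $\bip\HH$ (together with its base node and base edge) essentially unchanged: reversal of the ribbon structure, and transposition of the hypergraph. Throughout I would use the fact that $B_E$, $B_V$ and the maps $T\mapsto\mathbf f_E(T)$, $T\mapsto\mathbf f_V(T)$ depend only on the bipartite graph, not on the ribbon structure. Existence will come for free, since the instances of Theorem~\ref{t:hypertrees_unique_repr_by_J_trees} that I invoke already assert ``exactly one''.

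First I would treat the claim that every $\mathbf f\in B_E$ is realized by exactly one $V$-cut Jaeger tree. The idea is to apply Theorem~\ref{t:hypertrees_unique_repr_by_J_trees} not to the given ribbon data but to the \emph{reversed} ribbon structure, with the same base node $b_0$ and with base edge $b_0b_1^-$ (the minus taken in the original structure). That application produces, for each $\mathbf f\in B_E$, a unique $E$-cut Jaeger tree $T$ with $\mathbf f_E(T)=\mathbf f$ relative to this reversed data. Then, by Lemma~\ref{c:set_of_emerald_trees_eq_violet} applied with the reversed structure in the role of the original one — and using that reversal is an involution, so that the edge preceding $b_0b_1^-$ in the reversed structure is again $b_0b_1$ — the set of those $E$-cut Jaeger trees coincides with the set of $V$-cut Jaeger trees for the original ribbon data. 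Hence the $V$-cut Jaeger tree realizing $\mathbf f$ is unique.

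It remains to handle hypertrees on $V$, where I would pass to the transpose $\overline\HH=(E,V)$, keeping the same (color-blind) ribbon structure, base node and base edge, so that $\bip\overline\HH$ is literally the same ribbon graph, but now $E$ is the violet class and $V$ the emerald class. Consequently $B_V$ is exactly the set of hypertrees on the hyperedge class of $\overline\HH$, and $\mathbf f_V(T)$ computed in $\HH$ is the hypertree on that class realized by $T$; moreover the cut conditions are governed only by the colors, so that a spanning tree cut at every violet endpoint of $\overline\HH$ is precisely an $E$-cut Jaeger tree of $\HH$, while one cut at every emerald endpoint of $\overline\HH$ is precisely a $V$-cut Jaeger tree of $\HH$. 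Applying Theorem~\ref{t:hypertrees_unique_repr_by_J_trees} to $\overline\HH$ then yields that each $\mathbf f\in B_V$ has a unique $V$-cut Jaeger tree representative, and applying the statement from the previous paragraph to $\overline\HH$ yields that each $\mathbf f\in B_V$ has a unique $E$-cut Jaeger tree representative. This exhausts the corollary.

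I do not expect a substantial obstacle here: all of the content already lives in Theorem~\ref{t:hypertrees_unique_repr_by_J_trees} and Lemma~\ref{c:set_of_emerald_trees_eq_violet}, and the argument is essentially bookkeeping with the reversal and transposition symmetries. The one place to be careful is precisely this bookkeeping — shifting the base edge to $b_0b_1^-$ so that the involution returns it to $b_0b_1$, and keeping straight that ``cut at a violet (respectively emerald) endpoint'' is the invariant notion while the labels ``$V$-cut'' and ``$E$-cut'' interchange their meaning under transposition.
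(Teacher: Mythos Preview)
Your proposal is correct and follows essentially the same route as the paper's proof: the paper invokes Lemma~\ref{l:emerald_tree_is_also_violet} (of which Lemma~\ref{c:set_of_emerald_trees_eq_violet} is the set-level restatement you use) together with Theorem~\ref{t:hypertrees_unique_repr_by_J_trees} for the reversed ribbon structure to handle the first claim, and then appeals to ``interchanging the roles of colors'' for the second, which is exactly your transposition argument. Your version is more explicit about the base-edge shift and the color bookkeeping under transposition, but the underlying argument is the same.
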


\begin{proof}
Note that $\mathbf f$ does not depend on the ribbon structure. By Lemma \ref{l:emerald_tree_is_also_violet}, the trees we are considering are exactly the $V$-cut Jaeger tree representatives of $\mathbf f$ with respect to the reversed ribbon structure. According to Theorem \ref{t:hypertrees_unique_repr_by_J_trees}, there is a unique such tree. The second claim follows by interchanging the roles of colors.
\end{proof}

In Proposition \ref{prop:bernardi->jaeger} we saw that each Bernardi process of Section \ref{sec:Bernardi-process} provides a function from the set of hypertrees (on an arbitrarily fixed color class of $\bip\HH$) to the appropriate set of Jaeger trees. The map is an injection because the same tree cannot represent different hypertrees. By Theorem \ref{t:hypertrees_unique_repr_by_J_trees} or Corollary \ref{cor:V_hyper_by_emerald_J_repr}, the map must also be a surjection. Hence we arrive at the following two conclusions.

\begin{thm}\label{t:J-trees_are_B-trees}
For any connected bipartite graph (of color classes $E$ and $V$) with a fixed ribbon structure, base node, and base edge, the outcomes of the (ht:$E$, cut:$V$) Bernardi process, as well as the outcomes of the (ht:$V$, cut:$V$) Bernardi process, are exactly the $V$-cut Jaeger trees.
	
In the same way, the outcomes of the (ht:$E$, cut:$E$) Bernardi process, as well as the (ht:$V$, cut:$E$) Bernardi process, are exactly the $E$-cut Jaeger trees.
\end{thm}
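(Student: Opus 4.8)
The plan is to assemble the theorem from results already in hand; the statement is essentially a bijectivity bookkeeping exercise once Theorem \ref{thm:Bernardi_well_def} and the uniqueness results of subsection \ref{ssec:Jaeger->Bernardi} are available.

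Fix first the case of the (ht:$E$, cut:$V$) process. By Proposition \ref{prop:bernardi->jaeger} every run of this process (over the various hypertrees $\mathbf f\in B_E$) terminates in a $V$-cut Jaeger tree, so it defines a map $\Phi\colon B_E\to\{\,V\text{-cut Jaeger trees}\,\}$; and by Theorem \ref{thm:Bernardi_well_def} the output $\Phi(\mathbf f)$ realizes $\mathbf f$, i.e.\ $\mathbf f_E(\Phi(\mathbf f))=\mathbf f$. On the other side, the $V$-cut half of Corollary \ref{cor:V_hyper_by_emerald_J_repr} says precisely that $T\mapsto\mathbf f_E(T)$ is a bijection from the set of $V$-cut Jaeger trees onto $B_E$. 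Thus $\Phi$ is a right inverse of a bijection, hence equals its two-sided inverse and is itself a bijection; in particular its image --- the set of outcomes of the (ht:$E$, cut:$V$) process --- is exactly the set of $V$-cut Jaeger trees.

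The other three cases are handled verbatim, only substituting the appropriate ingredients. For (ht:$V$, cut:$V$) one uses Proposition \ref{prop:bernardi->jaeger} again together with the ``$B_V$, $V$-cut'' part of Corollary \ref{cor:V_hyper_by_emerald_J_repr}, with $\mathbf f_V$ in place of $\mathbf f_E$. For (ht:$E$, cut:$E$) one uses Proposition \ref{p:B-trees_are_J-trees} and Theorem \ref{t:hypertrees_unique_repr_by_J_trees} directly. For (ht:$V$, cut:$E$) one uses Proposition \ref{p:B-trees_are_J-trees} and the ``$B_V$, $E$-cut'' part of Corollary \ref{cor:V_hyper_by_emerald_J_repr}. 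Alternatively, the $V\leftrightarrow E$ symmetry of the setup (passing from $\HH$ to $\overline\HH$) together with Corollary \ref{c:set_of_emerald_trees_eq_violet} (reversing the ribbon structure to turn ``$V$-cut'' into ``$E$-cut'') lets one deduce all four cases from a single one.

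There is no genuine obstacle remaining at this stage: the substantive content --- that a violet Bernardi run never traverses a cycle and terminates in a spanning tree realizing $\mathbf f$ (Lemmas \ref{l:no_cycl_in_Bernardi}, \ref{l:traversed_edge_is_not_deleted} and Theorem \ref{thm:Bernardi_well_def}), and that hypertrees have unique Jaeger representatives of each color (Theorem \ref{t:hypertrees_unique_repr_by_J_trees} and Corollary \ref{cor:V_hyper_by_emerald_J_repr}) --- has already been established. The only thing that demands care is clerical: in each of the four cases one must match the ``cut'' color to the correct flavor of Jaeger tree produced, and invoke bijectivity of $T\mapsto\mathbf f_\bullet(T)$ for the color class on which the input hypertree lives ($\mathbf f_E$ for (ht:$E$,$\,\cdot\,$), $\mathbf f_V$ for (ht:$V$,$\,\cdot\,$)). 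If one prefers a purely numerical phrasing, the same conclusion follows from $|B_E|=\#\{V\text{-cut Jaeger trees}\}=\#\{E\text{-cut Jaeger trees}\}=|B_V|$ (Theorem \ref{t:hypertrees_unique_repr_by_J_trees}, Corollary \ref{cor:V_hyper_by_emerald_J_repr}, Theorem \ref{l:quasitriang_gives_bijection}) together with the injectivity of each $\Phi$.
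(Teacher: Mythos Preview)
Your proposal is correct and follows essentially the same route as the paper: the paper also argues that the Bernardi process gives a map from hypertrees to Jaeger trees (Propositions \ref{prop:bernardi->jaeger} and \ref{p:B-trees_are_J-trees}), that this map is injective because a tree determines its hypertree, and that surjectivity then follows from Theorem \ref{t:hypertrees_unique_repr_by_J_trees} and Corollary \ref{cor:V_hyper_by_emerald_J_repr}. Your ``right inverse of a bijection'' phrasing is a tidy repackaging of the same ingredients.
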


\begin{coroll} 
\label{c:Jaeger_hypertree_uniquely_realized}
	For any connected bipartite graph with color classes $E$, $V$ and a fixed ribbon structure, base node, and base edge, the (say) $V$-cut Jaeger trees give a bijection between the sets of hypertrees $B_{E}$ and $B_{V}$, namely for each hypertree on $E$, there is exactly one $V$-cut Jaeger tree that realizes it and the same holds for each hypertree on $V$.
\end{coroll}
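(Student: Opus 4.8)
The plan is to read everything off Corollary~\ref{cor:V_hyper_by_emerald_J_repr}. Write $\mathcal J$ for the set of $V$-cut Jaeger trees of $\bip\HH$ with the given ribbon structure, base node, and base edge. Corollary~\ref{cor:V_hyper_by_emerald_J_repr} asserts, first, that $T\mapsto\mathbf f_E(T)$ is a bijection $\mathcal J\to B_E$: surjectivity is the existence of a $V$-cut Jaeger tree realizing each $\mathbf f\in B_E$ --- produced by the (ht:$E$, cut:$V$) Bernardi process, which is well defined by Theorem~\ref{thm:Bernardi_well_def} and outputs a $V$-cut Jaeger tree by Proposition~\ref{prop:bernardi->jaeger} --- and injectivity is its uniqueness clause. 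The ``similarly'' part of Corollary~\ref{cor:V_hyper_by_emerald_J_repr}, with the roles of $V$ and $E$ interchanged, asserts in the same way that $T\mapsto\mathbf f_V(T)$ is a bijection $\mathcal J\to B_V$ (existence via the (ht:$V$, cut:$V$) process, uniqueness via the color-swapped version of Theorem~\ref{t:hypertrees_unique_repr_by_J_trees}).

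Given these two bijections, I would simply compose: $\mathbf f_V\circ(\mathbf f_E|_{\mathcal J})^{-1}\colon B_E\to B_V$ is a bijection, and it is exactly ``the bijection given by the $V$-cut Jaeger trees,'' since it sends $\mathbf f_E(T)$ to $\mathbf f_V(T)$ for each $T\in\mathcal J$. The reformulation in the statement --- for each hypertree on $E$ a unique $V$-cut Jaeger tree realizing it, and likewise for each hypertree on $V$ --- is then literally the content of the two bijectivity claims just recorded.

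Nothing here is hard: the statement is pure bookkeeping built on Theorem~\ref{t:hypertrees_unique_repr_by_J_trees} and Corollary~\ref{cor:V_hyper_by_emerald_J_repr}, and the only genuinely substantive ingredient upstream is the uniqueness half of Theorem~\ref{t:hypertrees_unique_repr_by_J_trees}, namely the two-way count of $\sum_{u\in E_0}\mathbf f(u)$ that yields a contradiction from two distinct representing Jaeger trees. I would also note, to connect with the sequel, that by Theorem~\ref{t:J-trees_are_B-trees} this same bijection is nothing but the (ht:$E$, cut:$V$) Bernardi process regarded with its hypertree input as a variable, and that (once the dissection property is established in Section~\ref{sec:shelling}) Theorem~\ref{l:quasitriang_gives_bijection} identifies it with the bijection coming from the corresponding dissection of the root polytope $Q_{\bip\HH}$.
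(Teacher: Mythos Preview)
Your proposal is correct and matches the paper's own reasoning: the corollary is stated without a separate proof precisely because it is immediate from Corollary~\ref{cor:V_hyper_by_emerald_J_repr} (unique $V$-cut Jaeger tree realizing each hypertree on $E$, and by color-swap each hypertree on $V$), which gives the two bijections $\mathcal J\to B_E$ and $\mathcal J\to B_V$ that you compose. Your closing remarks about Theorem~\ref{t:J-trees_are_B-trees} and the dissection interpretation via Theorem~\ref{l:quasitriang_gives_bijection} also mirror the paper's surrounding commentary.
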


This bijection generalizes the bijection between spanning trees and break divisors of a graph found by Bernardi \cite{Bernardi_Tutte} and studied further by Baker and Wang \cite{Baker-Wang}.

In section \ref{sec:shelling} we will give a geometric interpretation of the above facts by proving that the simplices corresponding to Jaeger trees of $\bip\HH$ form a dissection of the root polytope $Q_{\bip\HH}$. In particular, the bijection of Corollary \ref{c:Jaeger_hypertree_uniquely_realized} turns out to be an instance of Theorem \ref{l:quasitriang_gives_bijection}.

\subsection{The four types of Bernardi processes}
\label{subs:four_Bernardi}

In our treatment of hypergraphs $\HH$ through their associated bipartite graphs $\bip\HH$, the roles of the emerald and violet color classes are inherently symmetric. In particular, there are only two different versions of the Bernardi process, as described in Section \ref{sec:Bernardi-process}. But if one is solely interested in graphs $G$, as opposed to hypergraphs, the construction of $\bip G$ becomes somewhat unnatural and the symmetry is easy to miss. In this subsection we formulate statements on four Bernardi processes: the two above and their transposes. Capitalizing on the structure of Jaeger trees, we deduce several relationships between them.

\begin{thm}
Run the (ht:$E$, cut:$V$) Bernardi process on the hypertree $\mathbf f$ (on $E$), and let the resulting spanning tree of $\bip \HH$ be $T$. Then by running the (ht:$V$, cut:$V$) Bernardi-process on the induced (on $V$) hypertree $\mathbf f_V(T)$, we again get $T$ as resulting spanning tree.
\end{thm}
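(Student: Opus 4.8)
The plan is to reduce the statement to the uniqueness of $V$-cut Jaeger tree representatives, which is already available to us. By hypothesis $T$ is the outcome of the (ht:$E$, cut:$V$) Bernardi process applied to $\mathbf f$, so by Proposition \ref{prop:bernardi->jaeger} the tree $T$ is a $V$-cut Jaeger tree (with respect to the given ribbon structure, base node, and base edge). Moreover $T$ realizes the hypertree $\mathbf f_V(T)$ on $V$ essentially by the definition of $\mathbf f_V$: we have $\mathbf f_V(T)(v)=d_T(v)-1$ for every $v\in V$. In particular $\mathbf f_V(T)$ is a genuine hypertree on $V$ (it has a spanning tree realization, namely $T$), so it is a legitimate input for the (ht:$V$, cut:$V$) Bernardi process.

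Next I would run the (ht:$V$, cut:$V$) Bernardi process on $\mathbf f_V(T)$. By Theorem \ref{thm:Bernardi_well_def} this process terminates in a spanning tree $T'$ of $\bip\HH$ that realizes $\mathbf f_V(T)$, and by Proposition \ref{prop:bernardi->jaeger} the tree $T'$ is again a $V$-cut Jaeger tree. Thus both $T$ and $T'$ are $V$-cut Jaeger trees realizing the \emph{same} hypertree $\mathbf f_V(T)$ on $V$.

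Finally I would invoke the uniqueness half of Corollary \ref{c:Jaeger_hypertree_uniquely_realized} (equivalently, the second part of Corollary \ref{cor:V_hyper_by_emerald_J_repr} together with Theorem \ref{t:J-trees_are_B-trees}): for each hypertree on $V$ there is \emph{exactly one} $V$-cut Jaeger tree realizing it. Since $T$ and $T'$ are both such trees for $\mathbf f_V(T)$, we conclude $T'=T$, which is precisely the claim.

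\textbf{On the main obstacle.} There is really no hard new content here: the theorem is a bookkeeping consequence of the structural results already proved (the identification of Bernardi outcomes with Jaeger trees and the uniqueness of the Jaeger representative of a hypertree). The only points worth stating carefully are (a) that $\mathbf f_V(T)$ is in fact a hypertree on $V$, so that the second process is defined — immediate from the fact that $T$ realizes it — and (b) that the two runs use the \emph{same} ribbon structure, base node, and base edge, so that the uniqueness statement of Corollary \ref{c:Jaeger_hypertree_uniquely_realized} applies verbatim. Once these are noted, the argument is complete.
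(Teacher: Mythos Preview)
Your proof is correct and follows essentially the same route as the paper: both $T$ and $T'$ are $V$-cut Jaeger trees realizing the same hypertree $\mathbf f_V(T)$ on $V$, and uniqueness forces $T'=T$. Your citation of Corollary~\ref{cor:V_hyper_by_emerald_J_repr} (or Corollary~\ref{c:Jaeger_hypertree_uniquely_realized}) is in fact slightly more precise than the paper's own reference to Theorem~\ref{t:hypertrees_unique_repr_by_J_trees}, since the latter is stated for $E$-cut trees and hypertrees on $E$.
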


\begin{proof}
Let the spanning tree produced by the (ht:$V$, cut:$V$) Bernardi-process on $\mathbf f_V(T)$ be $T'$. Then by Proposition \ref{prop:bernardi->jaeger}, $T$ and $T'$ are both $V$-cut Jaeger trees, and they both realize $\mathbf f_V(T)$ on $V$. Hence by Theorem \ref{t:hypertrees_unique_repr_by_J_trees}, we have $T'=T$.
\end{proof}

\begin{thm}\label{thm:inverses}
Run the (ht:$E$, cut:$V$) Bernardi process on the hypertree $\mathbf f$ (on $E$), and let the resulting spanning tree of $\bip \HH$ be $T$. Then by running the (ht:$E$, cut:$E$) Bernardi process on $\mathbf f$, with base node $b_0$, base edge $b_0b_1^-$, and the reversed ribbon structure, we again get $T$ as resulting spanning tree.
\end{thm}

\begin{proof}
Let $T'$ be the spanning tree produced by the (ht:$E$, cut:$E$) Bernardi process on $\mathbf f$ with base node $b_0$, base edge $b_0b_1^-$, and the reversed ribbon structure. Then $T'$ is an $E$-cut Jaeger tree for the reversed ribbon structure. Hence by Lemma \ref{l:emerald_tree_is_also_violet}, $T'$ is a $V$-cut Jaeger tree with base node $b_0$, base edge $b_0b_1$ and the original ribbon structure. Thus $T$ and $T'$ are both $V$-cut Jaeger trees, and they both realize $\mathbf f$ on $E$. By Corollary \ref{cor:V_hyper_by_emerald_J_repr}, this implies $T'=T$.
\end{proof}

\begin{thm}
Run the (ht:$E$, cut:$V$) Bernardi process on the hypertree $\mathbf f$ (on $E$), and let the resulting spanning tree of $\bip \HH$ be $T$. Then by running the (ht:$V$, cut:$E$) Bernardi process on $\mathbf f_V(T)$, with base node $b_0$, base edge $b_0b_1^-$ and the reversed ribbon structure, we again get $T$ as resulting spanning tree.
\end{thm}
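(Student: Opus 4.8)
The plan is to imitate the proofs of the two preceding theorems: exhibit both $T$ and the output of the process in the statement as $V$-cut Jaeger trees, for one and the same ribbon structure, base node and base edge, realizing one and the same hypertree on $V$, and then conclude by a uniqueness statement. Write $\rho$ for the reversed ribbon structure, and let $T'$ be the spanning tree produced by the (ht:$V$, cut:$E$) Bernardi process run on $\mathbf f_V(T)$ with base node $b_0$, base edge $b_0b_1^-$ (where the $-$ is meant in the original structure), and ribbon structure $\rho$.

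First I would invoke Proposition \ref{p:B-trees_are_J-trees} to see that $T'$ is an $E$-cut Jaeger tree with respect to $\rho$, base node $b_0$, and base edge $b_0b_1^-$. Next I would feed this into Lemma \ref{c:set_of_emerald_trees_eq_violet}, applied with reference ribbon structure $\rho$ and with $b_0b_1^-$ in the role of the base edge there: the conclusion is that $T'$ is a $V$-cut Jaeger tree with respect to the reversal of $\rho$ --- which is just the original ribbon structure --- with base node $b_0$ and base edge equal to the edge preceding $b_0b_1^-$ at $b_0$ in $\rho$. Since passing from the original structure to $\rho$ reverses all cyclic orders, the edge preceding $b_0b_1^-$ at $b_0$ in $\rho$ is the edge following $b_0b_1^-$ at $b_0$ in the original structure, namely $b_0b_1$ itself. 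Hence $T'$ is a $V$-cut Jaeger tree for the original ribbon structure, base node $b_0$, base edge $b_0b_1$. On the other hand, $T$ is a $V$-cut Jaeger tree for this same data by Proposition \ref{prop:bernardi->jaeger}; it realizes $\mathbf f_V(T)$ on $V$ by the very definition of $\mathbf f_V$, while $T'$ realizes $\mathbf f_V(T)$ on $V$ by construction. By Corollary \ref{cor:V_hyper_by_emerald_J_repr}, a hypertree on $V$ has a unique $V$-cut Jaeger tree representative, so $T'=T$.

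The only step I expect to need genuine care is the identification of base edges after the double reversal of the ribbon structure: one must track, for each occurrence of the symbols $b_0b_1^+$ and $b_0b_1^-$, whether it refers to the original or the reversed structure, and verify that reversing twice returns the original while the base edge travels $b_0b_1\mapsto b_0b_1^-\mapsto b_0b_1$. Everything else is a direct appeal to the Jaeger-tree dictionary already established (Propositions \ref{prop:bernardi->jaeger} and \ref{p:B-trees_are_J-trees}, Lemma \ref{c:set_of_emerald_trees_eq_violet}, and the uniqueness in Corollary \ref{cor:V_hyper_by_emerald_J_repr}).
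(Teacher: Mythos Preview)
Your argument is correct and matches the paper's own proof essentially line for line: the paper also shows that $T'$ is an $E$-cut Jaeger tree for the reversed structure, converts it via Lemma~\ref{l:emerald_tree_is_also_violet} (equivalently, Lemma~\ref{c:set_of_emerald_trees_eq_violet}) into a $V$-cut Jaeger tree for the original data $(b_0,b_0b_1)$, and concludes by the uniqueness of $V$-cut Jaeger representatives of $\mathbf f_V(T)$. Your extra care with the base-edge bookkeeping $b_0b_1\mapsto b_0b_1^-\mapsto b_0b_1$ is exactly the point the paper leaves implicit.
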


\begin{proof}
Let $T'$ be the spanning tree produced by the (ht:$V$, cut:$E$) Bernardi process on $\mathbf f_V(T)$, with base node $b_0$, base edge $b_0b_1^-$, and the reversed ribbon structure. Then $T'$ is an $E$-cut Jaeger tree in that setup and by Lemma \ref{l:emerald_tree_is_also_violet}, $T'$ is a $V$-cut Jaeger tree with base node $b_0$, base edge $b_0b_1$ and the original ribbon structure. Hence $T$ and $T'$ are both $V$-cut Jaeger trees, and they both realize $\mathbf f_V(T)$ on $V$. Thus by Theorem \ref{t:hypertrees_unique_repr_by_J_trees}, we have $T'=T$.
\end{proof}

\begin{remark}\label{rem:Bernardi_altalanosit_BW}
The original Bernardi process for graphs is most easily identified with the (ht:$E$, cut:$V$) version. In \cite{Baker-Wang}, Baker and Wang define a right and a left inverse for the bijection between spanning trees and break divisors given by the Bernardi process on graphs. We note that the (ht:$V$, cut:$V$) Bernardi process is a generalization of their right inverse and the (ht:$V$, cut:$E$) Bernardi process with base node $b_0$, base edge $b_0b_1^-$, and the reversed ribbon structure is a generalization of their left inverse. (By Theorem \ref{thm:inverses}, these two mappings coincide.) Hence the theorems of this subsection extend the results of Baker and Wang. This hinges on the realization that both spanning trees and break divisors are special cases of hypertrees.
\end{remark}

\section{Shellability}
\label{sec:shelling}

We start this section by introducing a natural order on the set of Jaeger trees. As always, we assume that a ribbon structure, base node, and base edge are fixed for $\bip\HH$, where $\HH=(V,E)$ is a connected hypergraph. Then we actually have two sets of Jaeger trees in $\bip\HH$, the $E$-cut set and the $V$-cut set. We will order them both in two different ways. (The exact relationship between the two orders on the same set is somewhat unclear. In some sense they should be opposites but that is not literally true.)

\begin{definition}
\label{def:treeorder}
Let $T_1$ and $T_2$ be $V$-cut Jaeger trees. Consider the last time when their (violet) tours are identical. This happens when a violet node $v$ is current\footnote{It is easy to see that the current node $v$ has to be violet, cf.\ the first paragraph of the proof of Lemma \ref{l:subs_trees_cuts}, but in fact the definition can be made without this piece of information.} together with an edge $\varepsilon$ incident to $v$, where, say, $\varepsilon\in T_2$ but $\varepsilon\notin T_1$. In this case we put $T_1 <_V T_2$. This defines a total order on the set of $V$-cut Jaeger trees. Let us call this order the \emph{violet order of $V$-cut Jaeger trees}.

We also define the \emph{emerald order of $V$-cut Jaeger trees}. This time $T_1 <_E T_2$ if in their emerald tours (which use the reversed ribbon structure), the first edge that behaves differently is part of $T_2$ but not part of $T_1$. By symmetry of color classes, we also have \emph{violet and emerald orders of $E$-cut Jaeger trees}.
\end{definition}

Figure \ref{fig:order_of_Jaeger_trees} shows examples of these relations. Like before, the ribbon structure comes from the positive orientation of the plane, the base node is the lower left blue node, with base edge going to the right and downwards.

\begin{figure}[h]
	\begin{tikzpicture}[scale=.25]
	
	\begin{scope}[shift={(-10,0)}]
	\draw [dashed, thick] (18, 8.5) -- (14,6.5);
	\draw [ultra thick] (14, 6.5) -- (14,2);
	\draw [ultra thick] (18, 8.5) -- (22,6.5);
	\draw [ultra thick, green] (18, 8.5) -- (18,4);
	\draw [ultra thick] (14, 2) -- (18,4);
	\draw [ultra thick] (14, 2) -- (18,0);
	\draw [ultra thick] (18, 0) -- (22,2);
	\draw [dashed, thick] (22, 2) -- (18,4);
	\draw [dashed, thick, blue] (22, 2) -- (22,6.5);
	\draw [fill=green,green] (18, 0) circle [radius=0.6];
	\draw [fill=green,green] (18, 4) circle [radius=0.6];
	\draw [fill=green,green] (14, 6.5) circle [radius=0.6];
	\draw [fill=green,green] (22, 6.5) circle [radius=0.6];
	\draw [fill=blue,blue] (14, 2) circle [radius=0.6];
	\draw [fill=blue,blue] (22, 2) circle [radius=0.6];
	\draw [fill=blue,blue] (18, 8.5) circle [radius=0.6];
	
	\node at (12.8, 1) {{\small{$b_0$}}};
	\node at (16.5, -0.5) {{\small{$b_1$}}};
	\end{scope}
	
	\begin{scope}[shift={(10,0)}]
	\draw [dashed, thick] (18, 8.5) -- (14,6.5);
	\draw [ultra thick] (14, 6.5) -- (14,2);
	\draw [ultra thick] (18, 8.5) -- (22,6.5);
	\draw [dashed, thick, green] (18, 8.5) -- (18,4);
	\draw [ultra thick] (14, 2) -- (18,4);
	\draw [ultra thick] (14, 2) -- (18,0);
	\draw [ultra thick] (18, 0) -- (22,2);
	\draw [dashed, thick] (22, 2) -- (18,4);
	\draw [ultra thick, blue] (22, 2) -- (22,6.5);
	\draw [fill=green,green] (18, 0) circle [radius=0.6];
	\draw [fill=green,green] (18, 4) circle [radius=0.6];
	\draw [fill=green,green] (14, 6.5) circle [radius=0.6];
	\draw [fill=green,green] (22, 6.5) circle [radius=0.6];
	\draw [fill=blue,blue] (14, 2) circle [radius=0.6];
	\draw [fill=blue,blue] (22, 2) circle [radius=0.6];
	\draw [fill=blue,blue] (18, 8.5) circle [radius=0.6];
	
	\node at (12.8, 1) {{\small{$b_0$}}};
	\node at (16.5, -0.5) {{\small{$b_1$}}};
	\end{scope}
	\end{tikzpicture}
	\caption{ 
	The Jaeger tree on the left precedes the Jaeger tree on the right in the violet order, because of the blue edge. The Jaeger tree on the right precedes the Jaeger tree on the left in the emerald order because of the green edge.}
	\label{fig:order_of_Jaeger_trees}
\end{figure}

The goal of this section is to prove that the simplices corresponding to the, say, $V$-cut Jaeger trees form a dissection of the root polytope $Q_{\bip\HH}$, and moreover, the orders of Definition \ref{def:treeorder} translate to shelling orders for this dissection. This will be a crucial ingredient in the proof of Theorem \ref{thm:Bernardi-interior_well_def}, i.e., that our alternative definition for the interior polynomial agrees with the original one. The other key to the proof will be that the interior polynomial coincides with the $h$-vector of any shellable dissection of the root polytope.

The following notion is independent of ribbon structures and will greatly help to describe the terms in the $h$-vector. We invented it based on `external semi-activity,' which, in turn, we heard about from Alexander Postnikov.

\begin{definition}
Given a bipartite graph $G$, a total order on its edges, and a spanning tree $T$ of $G$, we say that an edge $\varepsilon\in T$ is \emph{internally semi-passive} in $T$, if $\varepsilon$ ``stands opposite'' to the smallest edge $\varepsilon'$ in the fundamental cut $C^*(T,\varepsilon)$, that is, $\varepsilon$ and $\varepsilon'$ have endpoints of different color in each component of $T-\varepsilon$. 
\end{definition}

See Figure \ref{fig:passzivitas_pelda} for an example. `Passive' here just stands for `not active.' In Tutte's original sense, `internally active' means `being smallest in the fundamental cut.' In `internally semi-active,' that condition is weakened to `standing parallel to the smallest element of the fundamental cut.'

\begin{figure}
\begin{center}
	\begin{tikzpicture}[-,>=stealth',auto,scale=0.4,
	thick]
	\tikzstyle{c}=[{circle,draw,font=\sffamily\small}]
	\node[c,color=green,fill] (0) at (0, 0.2) {};
	\node[c,color=blue,fill] (1) at (3, 2) {};
	\node[c,color=blue,fill] (2) at (-3, 2) {};
	\node[c,color=green,fill] (3) at (3, 5) {};
	\node[c,color=green,fill] (4) at (-3, 5) {};
	\node[c,color=blue,fill] (5) at (0, 6.8) {};
	\node[c,color=green,fill] (6) at (0, 3.6) {};
	\path[every node/.style={font=\sffamily\small}]
	(1) edge node {$0$} (0)
    (2) edge node {$6$} (4)
    (1) edge node {$2$} (6);
	\path[every node/.style={font=\sffamily\small}, line width=0.7mm]
	(0) edge node {$1$} (2)
	(1) edge node {$4$} (3)
	(5) edge [color=red] node {$5$} (3)
	(4) edge node {$7$} (5)
    (6) edge [color=red] node {$8$} (2)
	(5) edge node {$3$} (6);
	\end{tikzpicture}
	\end{center}
\caption{The internally semi-passive edges, for the spanning tree represented by thick lines and the given edge order (numbering), are shown in red.}
\label{fig:passzivitas_pelda}
\end{figure}

The next lemma plays an important role in the proof of shellability, as well as in relating the terms of the $h$-vector to the coefficients in the Bernardi-type definition of the interior polynomial. To that end, \ref{elso} and \ref{harmadik} of the equivalent conditions below are the most important. Note that they refer to orders defined using two different ribbon structures, a reverse pair. The condition \ref{otodik} is the easiest to check in practice. We will usually (for example, in Theorem \ref{thm:shelling}) refer to the set of edges characterized by the Lemma using the property \ref{masodik}.

\begin{lemma}
\label{l:activities_description}
    Let $T$ be a $V$-cut Jaeger tree, and $\varepsilon \in T$ be an edge. The following statements are equivalent.
\begin{enumerate}[label=(\roman*)]
\item \label{elso}
    $\varepsilon$ arises as a first difference between $T$ and some tree preceding $T$ in the violet ordering of Jaeger trees, i.e., there exists a $V$-cut Jaeger tree $T'$ such that $\varepsilon\not\in T'$ but the (violet) tours of $T$ and $T'$ coincide until reaching $\varepsilon$.
\item \label{masodik}
    $\varepsilon$ is internally semi-passive in the spanning tree $T$ with respect to the emerald $T$-order (Definition \ref{def:T-order}) of the edges of $\bip \HH$.
\item \label{harmadik}
    $\varepsilon=ve$ has its violet endpoint $v$ in the base component of $T-\varepsilon$ and $e$ is internally inactive for the hypertree $\mathbf f_E(T)$ with respect to the emerald $T$-order on $E$ (Definition \ref{def:induced_order}). 
\item \label{negyedik}
    $\varepsilon$ is not the largest element in $C^*(T,\varepsilon)$ according to the violet T-order of the edges of $\bip \HH$.
\item \label{otodik}
    $\varepsilon$ has its violet endpoint in the base component, and there exists an edge in $C^*(T,\varepsilon)$ with its emerald endpoint in the base component.
   \end{enumerate}
\end{lemma}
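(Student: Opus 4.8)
\textbf{Proof plan for Lemma \ref{l:activities_description}.}

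The plan is to establish the chain of equivalences by proving a cycle of implications, using the earlier structural results on Jaeger trees — chiefly Lemma \ref{l:char_Jaeger_cuts}, Lemma \ref{l:subs_trees_cuts}, and the dictionary between Jaeger trees and the Bernardi process (Theorem \ref{t:J-trees_are_B-trees}). I would arrange the argument as \ref{elso} $\Rightarrow$ \ref{otodik} $\Rightarrow$ \ref{negyedik} $\Rightarrow$ \ref{masodik} $\Rightarrow$ \ref{harmadik} $\Rightarrow$ \ref{elso}, so that each step exploits the characterization that is most convenient to start from. The conceptual heart is the observation, spelled out after the statement of Lemma \ref{l:char_Jaeger_cuts}, that the violet tour of $T$ visits the edges of $C^*(T,\varepsilon)$ in a very controlled way: first the edges with violet endpoint in the base component $T_0$ of $T-\varepsilon$ (possibly ending with $\varepsilon$ itself, if $\varepsilon$'s violet endpoint is in $T_0$), then $\varepsilon$ is traversed, then the remaining edges (those with violet endpoint outside $T_0$) are cut at their violet endpoints, and finally $\varepsilon$ is traversed back. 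Crucially, if $\varepsilon$ has its \emph{emerald} endpoint in $T_0$, then $\varepsilon$ is traversed from the emerald direction and therefore, although it is the first edge of $C^*(T,\varepsilon)$ reached, it becomes the \emph{largest} of the ``$T_0$-violet-side'' edges in the $<_{T,V}$ order (as the note after Lemma \ref{l:char_Jaeger_cuts} already points out).

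For \ref{elso} $\Rightarrow$ \ref{otodik}: given the witnessing tree $T'$, Lemma \ref{l:subs_trees_cuts} directly yields that $\varepsilon$ has its violet endpoint in the base component and that every edge of $T' \cap C^*(T,\varepsilon)$ has its emerald endpoint there; since $T'$ is a spanning tree it must contain at least one edge of the cut $C^*(T,\varepsilon)$, giving the desired edge with emerald endpoint in $T_0$. For \ref{otodik} $\Rightarrow$ \ref{negyedik}: with $\varepsilon$'s violet endpoint in $T_0$, Lemma \ref{l:char_Jaeger_cuts}\ref{kettoketto} tells us $\varepsilon$ is among the ``$T_0$-violet-side'' edges of the cut, and part \ref{egyegy} says every such edge precedes, in $<_{T,V}$, every ``$T_0$-emerald-side'' edge; the latter set is nonempty by hypothesis, so $\varepsilon$ is not $<_{T,V}$-largest in $C^*(T,\varepsilon)$. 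For \ref{negyedik} $\Rightarrow$ \ref{masodik}: one needs to pass from the violet $T$-order to the emerald $T$-order and translate ``not the largest in the base cut'' into ``semi-passive,'' i.e., ``stands opposite the smallest in the base cut.'' Here I would use Lemma \ref{l:emerald_tree_is_also_violet}/Lemma \ref{c:set_of_emerald_trees_eq_violet}: the emerald $T$-order is the violet $T$-order for the reversed ribbon structure, under which $T$ is an $E$-cut Jaeger tree; reversing the ribbon structure reverses the order of the two visits to each non-edge, hence (one checks) essentially reverses $<_{T,V}$ on the cut and swaps the roles of the two colors — so ``$\varepsilon$ not $<_{T,V}$-largest in $C^*(T,\varepsilon)$'' becomes ``$\varepsilon$ not $<_{T,E}$-smallest,'' and the color-swap analysis of which endpoint lies in which component of $T-\varepsilon$ turns ``$\varepsilon$'s violet endpoint in $T_0$, some edge's emerald endpoint in $T_0$'' into exactly the opposite-color condition defining internal semi-passivity with respect to $<_{T,E}$.

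For \ref{masodik} $\Leftrightarrow$ \ref{harmadik}: this is essentially the definition of semi-passivity unwound through the definitions of $\mathbf f_E(T)$ and of internal (in)activity for hypertrees. The base cut $C^*(T,\varepsilon)$ consists precisely of the edges along which a transfer of valence away from the emerald endpoint $e$ of $\varepsilon$ can be pushed; $e$ is internally inactive for $\mathbf f_E(T)$ exactly when some edge of the cut has strictly smaller emerald node in the induced order, which (tracing through Definition \ref{def:induced_order}) matches ``$\varepsilon$ stands opposite the smallest edge of the cut'' together with the base-component clause, once one knows $\varepsilon$'s violet endpoint is in $T_0$; and that clause is forced by semi-passivity plus the $V$-cut structure. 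Finally \ref{harmadik} $\Rightarrow$ \ref{elso}: from \ref{harmadik} we produce the witnessing tree $T'$ by a transfer of valence — since $e$ is internally inactive, there is a hypertree $\mathbf f'$ obtained from $\mathbf f_E(T)$ by moving one unit from $e$ to a smaller emerald node, realized (by Theorem \ref{t:hypertrees_unique_repr_by_J_trees}) by a unique $V$-cut Jaeger tree $T'$; one then argues, using the local description of the Bernardi process (Theorem \ref{t:J-trees_are_B-trees}) and Lemma \ref{lem:walk_vs_tour}, that the runs producing $T$ and $T'$ agree up to the moment $\varepsilon$ becomes current, where $T$ keeps $\varepsilon$ and $T'$ cuts it.

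The main obstacle I anticipate is the bookkeeping in \ref{negyedik} $\Rightarrow$ \ref{masodik} (and its partial converse packaged into the cycle): keeping straight, across the ribbon-reversal, which of the two tour-visits to each edge is ``first,'' how that reversal acts on the orders $<_{T,V}$ and $<_{T,E}$ restricted to the cut $C^*(T,\varepsilon)$, and how the geometric ``opposite-color'' condition of semi-passivity corresponds to the combinatorial ``not extremal in the cut'' condition. All the needed facts are present in Lemmas \ref{l:emerald_tree_is_also_violet}, \ref{c:set_of_emerald_trees_eq_violet}, and \ref{l:char_Jaeger_cuts}, but assembling them without sign errors is the delicate part; I would do it by fixing the orientation $\varepsilon = xy$ with $(x,\varepsilon) <_T (y,\varepsilon)$ once and for all and tracking everything relative to that choice.
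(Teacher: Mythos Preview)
Your cycle \ref{elso} $\Rightarrow$ \ref{otodik} $\Rightarrow$ \ref{negyedik} $\Rightarrow$ \ref{masodik} $\Rightarrow$ \ref{harmadik} $\Rightarrow$ \ref{elso} differs from the paper's hub-and-spoke arrangement (everything is shown equivalent to \ref{otodik}), and most of your steps either match the paper or can be made to work with the tools you cite. The step \ref{elso} $\Rightarrow$ \ref{otodik} is the paper's argument verbatim; \ref{otodik} $\Rightarrow$ \ref{negyedik} is the paper's \ref{otodik} $\Rightarrow$ \ref{negyedik}; your \ref{negyedik} $\Rightarrow$ \ref{masodik} is not how the paper proceeds (it goes \ref{negyedik} $\Rightarrow$ \ref{otodik} $\Rightarrow$ \ref{masodik}), but it can be salvaged once you first observe, via Lemma \ref{l:parents_in_T-order}, that \ref{negyedik} forces $\varepsilon$'s violet endpoint into the base component (your ``essentially reverses $<_{T,V}$ on the cut'' is not literally correct and is not needed---what you actually use is the color-swapped Lemma \ref{l:char_Jaeger_cuts}). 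Your \ref{masodik} $\Rightarrow$ \ref{harmadik} is underspecified but tracks the paper's argument.

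The genuine gap is your closing step \ref{harmadik} $\Rightarrow$ \ref{elso}. You propose to take the transferred hypertree $\mathbf f' = \mathbf f_E(T) - \mathbf 1_e + \mathbf 1_{e'}$, let $T'$ be its unique $V$-cut Jaeger realization, and argue that the Bernardi runs for $\mathbf f_E(T)$ and $\mathbf f'$ coincide up to $\varepsilon$. Neither half of this is justified. First, there is no reason the two runs agree before reaching $\varepsilon$: the keep/cut decision at an earlier edge $v_0e_0$ depends on whether the \emph{entire} vector is a hypertree in the reduced graph, and changing the values at $e$ and $e'$ can flip that decision at edges incident to neither. Second, even if the runs agreed up to $\varepsilon$, nothing guarantees that $T'$ omits $\varepsilon$: a realization of $\mathbf f'$ has degree $d_T(e)-1$ at $e$, but it may well retain $\varepsilon$ and drop a different edge at $e$. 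So the witnessing tree for \ref{elso} cannot be produced this way.

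The paper closes the loop differently, and this is the idea you are missing: it proves \ref{otodik} $\Rightarrow$ \ref{elso} by an explicit construction. Starting from \ref{otodik}, let $\varepsilon'$ be the first edge of $C^*(T,\varepsilon)$ with emerald endpoint in $T_0$ that the tour of $T$ reaches after $\varepsilon$, and set $T' = T_0 \cup \{\varepsilon'\} \cup T_1'$, where $T_1'$ is \emph{any} $V$-cut Jaeger tree of the subgraph $G_1$ spanned by $T_1$, with base node the violet endpoint of $\varepsilon'$. One then checks directly, by following the tour, that $T'$ is a $V$-cut Jaeger tree whose tour agrees with that of $T$ up to $\varepsilon$. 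This bypasses the Bernardi-run comparison entirely. In your scheme, the cleanest fix is to replace \ref{harmadik} $\Rightarrow$ \ref{elso} by \ref{harmadik} $\Rightarrow$ \ref{otodik} (the paper does this via a tightness argument on $\sum_{x\in E_0}\mathbf f(x)$) followed by this constructive \ref{otodik} $\Rightarrow$ \ref{elso}.
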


A concrete example, with two edges of the tree satisfying the equivalent conditions, is shown in Figure \ref{fig:activities_and_semiactivities}.

\begin{proof}
    \ref{elso} $\Rightarrow$ \ref{otodik} is straightforward from Lemma \ref{l:subs_trees_cuts}. Note that as $T'$ is also a spanning tree of $\bip \HH$, it includes some edge $\varepsilon'$ from the cut $C^*(T,\varepsilon)$, which then has the required property by the Lemma. 

\smallskip

\noindent 
\ref{otodik} $\Rightarrow$ \ref{elso}. Let $T_0$ and $T_1$ be the two subtrees of $T-\varepsilon$, with $T_0$ containing the base node. Let $G_0$ be the subgraph of $\bip\HH$ spanned by the node set of $T_0$, and $G_1$ be the subgraph spanned by the node set of $T_1$. We build up a $V$-cut Jaeger tree $T'$ together with its violet tour, such that $\varepsilon$ is the first difference between $T'$ and $T$. Follow the violet tour of $T$ until reaching $\varepsilon$, but at that moment, do not include $\varepsilon$ into $T'$. Instead, stay in $T_0$ and continue with the part of the tour of $T$ that would follow the traversal of $\varepsilon$ from the emerald direction. Stop at the first moment when an edge $\varepsilon' \in C^*(T,\varepsilon)$, with its emerald endpoint $e'$ in $T_0$, becomes current edge. (By Lemma \ref{l:char_Jaeger_cuts}, $\varepsilon'$ is actually the first edge of $C^*(T,\varepsilon)$ that becomes current after $\varepsilon$. By the same Lemma, and the assumption \ref{otodik}, the existence of $\varepsilon'$ is guaranteed.) Let $\varepsilon'=e'v'$ where $v'$ is in the node set of $G_1$. If $G_1$ is not just a point, then take the first edge $v'e''\in G_1$, after $v'e'$, in the cyclic order at $v'$. Take an arbitrary $V$-cut Jaeger tree $T'_1$ of $G_1$ with base point $v'$ and base edge $v'e''$. (Such a Jaeger tree can be constructed by running, say, the (ht:$V$, cut:$V$) Bernardi process on an arbitrary hypertree of $G_1$.)

We claim that $T'=T_0\cup\{\varepsilon'\}\cup T'_1$ is a $V$-cut Jaeger tree. Once we prove this, it becomes immediate from the construction that the first difference in the violet tours of $T$ and $T'$ is that $\varepsilon$ is included into $T$ but not included into $T'$.

Until reaching $\varepsilon$, the violet tours of $T$ and $T'$ coincide. Then $\varepsilon$ is cut at its violet endpoint in the tour of $T'$. Next we continue the traversal of $T_0$ until we arrive at $\varepsilon'$. During this time, any edges that we cut are edges of $G_0$ that are cut, at the same endpoint, in the tour of $T$ as well. As $\varepsilon'$ is traversed, the tour of $T'$ thus far has not cut any edge at its emerald endpoint. If there are any edges between $v'e'$ and $v'e''$ at $v'$, then they get cut at their violet endpoints. Next we start the traversal of $T'_1$, which is a $V$-cut Jaeger tree in $G_1$. Compared to the tour of $T'_1$ with regard to $G_1$, the only difference in the tour of $T'$ is that any edges in $C^*(T,\varepsilon)$ that we encounter have to be skipped. But by Lemma \ref{l:char_Jaeger_cuts}, all the edges from $C^*(T,\varepsilon)$ that have their emerald endpoint in $G_1$ have already been cut, hence none of them gets cut at its emerald endpoint. Then when we arrive back at $e'$ after traversing $\varepsilon'$ for the second time, the set of node-edge pairs that have not been current is the same as in the violet tour of $T$ after $(e',e'v')$ serves as current pair. Moreover, the orders in which these remaining pairs become current are the same, too. Hence during the remainder of the tour of $T'$, each edge that is cut is still cut at its violet endpoint.

\smallskip

\noindent 
\ref{otodik} $\Rightarrow$ \ref{masodik}. The tree $T$ is an $E$-cut Jaeger tree for the reversed ribbon structure by Lemma \ref{l:emerald_tree_is_also_violet}. Applying Lemma \ref{l:char_Jaeger_cuts} with ``emerald'' instead of ``violet,'' we see that those edges in $C^*(T,\varepsilon)$ that have their emerald endpoint in the base component all precede, in the emerald $T$-order, those which have their violet endpoint there. Since now there exists an edge in $C^*(T,\varepsilon)$ that has its emerald endpoint in the base component, necessarily the smallest edge $\delta$ of $C^*(T,\varepsilon)$, according to the emerald $T$-order, is of this kind. As $\varepsilon$ has its violet endpoint in the base component, it stands opposite to $\delta$, which means that $\varepsilon$ is  internally semi-passive.

\smallskip

\noindent 
\ref{masodik} $\Rightarrow$ \ref{harmadik}. First we prove that if $\varepsilon=ve$ is internally semi-passive in $T$ with respect to the emerald $T$-order of the edges of $\bip \HH$, then $\varepsilon$ has its violet endpoint $v$ in the base component. The condition means that the smallest edge $v'e'$ in $C^*(T,\varepsilon)$, according to the emerald $T$-order, stands opposite to $ve$. By Lemma \ref{l:char_Jaeger_cuts}, among two edges of $C^*(T,\varepsilon)$ standing opposite to each other, the smaller one according to the emerald $T$-order has its emerald endpoint in the base component. Hence $\varepsilon$ has its violet endpoint in the base component.

Now we show that $e$ is internally inactive in $\mathbf f_E(T)$ with respect to the emerald $T$-order. Since $v'e'\in C^*(T,\varepsilon)$, replacing $ve$ with $v'e'$ in $T$ yields another spanning tree $T'$ of $\bip \HH$. In particular $\mathbf f_E(T')$ is a hypertree. As $\mathbf f_E(T')$ is obtained from $\mathbf f_E(T)$ by a transfer of valence from $e$ to $e'$ (assuming that $e'\in E$ and $v'\in V$), it is enough to show that $e'$ precedes $e$ in the emerald $T$-order. We already know that $ve$ has its violet endpoint in the base component, implying that $e$ is reached through $ve$ in the emerald tour of $T$. From Lemma \ref{l:char_Jaeger_cuts} we also know that $v'e'$ becomes current edge, with $e'$ as current node, earlier during the emerald tour of $T$ than the first traversal of $ev$. Hence any edge incident to $e$ comes later in the emerald $T$-order than $v'e'$. Thus, $e$ comes later in the emerald $T$-order than $e'$. 

\smallskip

\noindent 
\ref{harmadik} $\Rightarrow$ \ref{otodik}. As both \ref{harmadik} and \ref{otodik} claim that $\varepsilon=ve$ has its violet endpoint $v$ in the base component, it is enough to prove that if $e$ is internally inactive in $\mathbf f_E(T)$ with respect to the emerald $T$-order, then there is an edge in $C^*(T,\varepsilon)$ that has its emerald endpoint in the base component.
    
Suppose for contradiction that there is no edge in $C^*(T,\varepsilon)$ with its emerald endpoint in the base component $T_0$ of $T-\varepsilon$. Let $V_0$ be the set of violet nodes of $T_0$, and let $E_0$ be the set of its emerald nodes. Our assumption implies that for each $x\in E_0$, the set of edges of $T$ incident to $x$ equals the set of edges of $T_0$ incident to $x$. Moreover, all edges of $\bip\HH$ incident to $x$ have their violet endpoint in $V_0$.

Since $T_0$ is a tree spanning $V_0\cup E_0$, we have $\sum_{x\in E_0}\mathbf f_E(T)(x)=|V_0|+|E_0|-1-|E_0|=|V_0|-1$. Recall that for any hypertree $\mathbf f$ on $E$, we have $\sum_{x\in E_0}\mathbf f(x)\leq |V_0|-1$ \cite[Theorem 3.4]{hiperTutte}. In other words, with respect to $\mathbf f_E(T)$, the set $E_0$ is `tight,' meaning that no element of it may receive a transfer of valence from outside of $E_0$. Now this is a contradiction because all the emerald nodes that are smaller than $e$ in the emerald $T$-order lie in $E_0$, which blocks $e$ from being internally inactive. 

\smallskip

\noindent \ref{negyedik} $\Rightarrow$ \ref{otodik}. As discussed before the proof of Lemma \ref{l:char_Jaeger_cuts}, if $\varepsilon$ has its emerald endpoint in the base component, then it is the largest element, with respect to the violet $T$-order, of its fundamental cut. Indeed, such an edge only becomes current, in conjunction with its violet endpoint, at the time of its second traversal. This proves the first assertion of  \ref{otodik}; as to the second, let $\varepsilon'$ be an edge of $C^*(T,\varepsilon)$ such that $\varepsilon <_{T,V}\varepsilon'$. Then by part \ref{kettoketto} of Lemma \ref{l:char_Jaeger_cuts}, $\varepsilon'$ must have its emerald endpoint in the base component, for otherwise it would precede $\varepsilon$ in the violet $T$-order.

\smallskip

\noindent 
\ref{otodik} $\Rightarrow$ \ref{negyedik} also follows directly from Lemma \ref{l:char_Jaeger_cuts}. Let $\varepsilon'$ be an edge in $C^*(T,\varepsilon)$ having its emerald endpoint in the base component. Since $\varepsilon$ has its violet endpoint in the base component, by part \ref{egyegy} of Lemma \ref{l:char_Jaeger_cuts} we have $\varepsilon \leq_{T,V} \varepsilon'$, preventing $\varepsilon$ from being the largest edge of $C^*(T,\varepsilon)$ with respect to the violet $T$-order.
\end{proof}

We can further deduce the following characterization of internally inactive nodes. It does not matter whether the Jaeger tree of the next two corollaries is $V$-cut or $E$-cut since we only need the fact that it has an emerald tour.

\begin{figure}[h]
	\begin{tikzpicture}[scale=.25]
	
	\draw [ultra thick] (18, 8.5) -- (14,6.5);
	\draw [dashed, thick] (14, 6.5) -- (14,2);
	\draw [ultra thick, red] (18, 8.5) -- (22,6.5);
	\draw [ultra thick] (18, 8.5) -- (18,4);
	\draw [ultra thick, red] (14, 2) -- (18,4);
	\draw [ultra thick] (14, 2) -- (18,0);
	\draw [dashed, thick] (18, 0) -- (22,2);
	\draw [dashed, thick] (22, 2) -- (18,4);
	\draw [ultra thick] (22, 2) -- (22,6.5);
	\draw [fill=green,green] (18, 0) circle [radius=0.6];
	\draw [fill=green,green] (18, 4) circle [radius=0.6];
	\draw [fill=green,green] (14, 6.5) circle [radius=0.6];
	\draw [fill=green,green] (22, 6.5) circle [radius=0.6];
	\draw [fill=blue,blue] (14, 2) circle [radius=0.6];
	\draw [fill=blue,blue] (22, 2) circle [radius=0.6];
	\draw [fill=blue,blue] (18, 8.5) circle [radius=0.6];
	
	\draw (18, 4) circle [radius=1];
	\draw (22, 6.5) circle [radius=1];
	
	\node at (12.8, 1) {{\small{$b_0$}}};
	\node at (16.5, -0.5) {{\small{$b_1$}}};
	\end{tikzpicture}
	\caption{
	An $E$-cut Jaeger tree $T$ with circled internally inactive emerald nodes with respect to the emerald $T$-order on $E$. The internally semi-passive edges with respect to the emerald $T$-order of the edges are shown in red. (The ribbon structure once again comes from the counterclockwise orientation of the plane.)}
	\label{fig:activities_and_semiactivities}
\end{figure}

\begin{coroll}\label{cor:passziv_csucshoz_passziv_el}
Let $T$ be a Jaeger tree in $\bip\HH$. An emerald node $e\in E$ is internally inactive with respect to the emerald $T$-order on $E$ if and only if there is an internally semi-passive edge (with respect to the emerald $T$-order of the edges of $\bip\HH$) incident to it. Moreover, in this case there is a unique such edge, namely the edge through which $e$ is reached in the emerald tour of $T$ --- that is, the first edge of the unique path in $T$ from $e$ to the base node $b_0$.
\end{coroll}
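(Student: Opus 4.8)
The plan is to read the corollary off the equivalence \ref{masodik}$\Leftrightarrow$\ref{harmadik} of Lemma \ref{l:activities_description}, which already ties internal semi-passivity of an edge $\varepsilon=ve$ (for the emerald $T$-order of the edges of $\bip\HH$) to the conjunction ``$v$ lies in the base component of $T-\varepsilon$, and $e$ is internally inactive for $\mathbf f_E(T)$ with respect to the emerald $T$-order on $E$.'' First I would reduce to the case that $T$ is a $V$-cut Jaeger tree by Lemma \ref{l:emerald_tree_is_also_violet}: the emerald tour of $T$, the two emerald $T$-orders, and hence the notion of internal semi-passivity invoked here, are intrinsic to that tour and are unchanged when an $E$-cut tree is re-read as a $V$-cut tree for the reversed ribbon structure with base edge $b_0b_1^-$. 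Next I would dispose of the degenerate case $e=b_0$: then $e$ is the smallest node in the emerald $T$-order on $E$, hence internally active, and since condition \ref{harmadik} of Lemma \ref{l:activities_description} for an edge with emerald endpoint $e$ requires $e$ to be internally inactive, \ref{masodik}$\Leftrightarrow$\ref{harmadik} shows that no edge incident to $e$ is internally semi-passive; thus when $e=b_0$ both sides of the claimed equivalence are false.

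Assume now $e\neq b_0$, and let $\varepsilon_0$ be the first edge of the unique path in $T$ from $e$ to $b_0$. I would first observe that $\varepsilon_0$ is exactly the edge by which $e$ is reached in the emerald tour of $T$: the current node of a tour moves only along tree edges and so traces a walk in $T$ starting at $b_0$, which enters $e$ for the first time along its edge toward $b_0$ (this is used implicitly already in the proof of \ref{masodik}$\Rightarrow$\ref{harmadik} in Lemma \ref{l:activities_description}). Since $\varepsilon_0$ separates $e$ from $b_0$ in $T$, the emerald node $e$ lies outside the base component of $T-\varepsilon_0$, so the violet endpoint of $\varepsilon_0$ lies inside it; hence for $\varepsilon=\varepsilon_0$ condition \ref{harmadik} of Lemma \ref{l:activities_description} reduces to ``$e$ is internally inactive.'' The forward implication of the corollary is then \ref{harmadik}$\Rightarrow$\ref{masodik} applied to $\varepsilon_0$, which produces an internally semi-passive edge incident to $e$; the converse is \ref{masodik}$\Rightarrow$\ref{harmadik} applied to an arbitrary internally semi-passive edge incident to $e$ (such an edge lies in $T$ and has $e$ as its emerald endpoint), which yields directly that $e$ is internally inactive. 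For uniqueness and the identification, the same implication \ref{masodik}$\Rightarrow$\ref{harmadik} puts the violet endpoint of any internally semi-passive edge at $e$ into the base component, so $e$ and $b_0$ lie in different components of $T$ minus that edge; the $T$-path from $e$ to $b_0$ therefore uses it, and being incident to $e$ it must be the first edge of that path, i.e.\ $\varepsilon_0$.

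I do not anticipate a serious obstacle: the substance is already contained in Lemma \ref{l:activities_description}. The points needing a little care are the $E$-cut versus $V$-cut reduction (verifying that Lemma \ref{l:emerald_tree_is_also_violet} genuinely preserves the emerald tour and all orders built from it) and the small but essential fact that the parent edge $\varepsilon_0$ of $e$ in $T$ is at once the edge by which $e$ is first reached in the emerald tour and the only possible internally semi-passive edge incident to $e$; the case $e=b_0$ is immediate and only needs to be stated.
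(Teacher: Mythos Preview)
Your proposal is correct and follows essentially the same route as the paper: both treat the case $e=b_0$ separately (smallest node, hence active; no incident edge can satisfy condition \ref{harmadik}), and for $e\neq b_0$ both identify the parent edge $\varepsilon_0$ of $e$ in $T$ as the unique edge incident to $e$ with its violet endpoint in its own base component, then invoke \ref{masodik}$\Leftrightarrow$\ref{harmadik} of Lemma \ref{l:activities_description}. Your explicit reduction of the $E$-cut case via Lemma \ref{l:emerald_tree_is_also_violet} is handled in the paper by a one-line remark preceding the corollary.
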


\begin{proof}
If $e=b_0$, then $e$ is the smallest element in the emerald $T$-order of $E$ and hence it cannot be inactive. Also in this case, edges incident to $b_0$ have their emerald endpoint in the base component of their fundamental cut, which prevents them from being internally semi-passive by Lemma \ref{l:activities_description}.

If $e\neq b_0$, then there is a unique edge $\varepsilon=ve$ such that $e$ is reached through $\varepsilon$ in the emerald $T$-tour. This is the only edge incident to $e$ that has its violet endpoint in the base component of its own fundamental cut --- that is, $\varepsilon$ is the only edge incident to $e$ that may be internally semi-passive. Now by Lemma \ref{l:activities_description}, $\varepsilon$ is internally semi-passive with respect to the emerald $T$-order if and only if $e$ is internally inactive with respect to the emerald $T$-order on $E$.
\end{proof}

Now we are in a position to explain the connection between internal activities of hypertrees and internal semi-activities of Jaeger trees.

\begin{coroll}\label{cor:passziv_szemipassziv}
For each Jaeger tree $T$, the number of its internally semi-passive edges with respect to the emerald $T$-order (of the edges of $\bip\HH$) equals the number of internally inactive emerald nodes for $\mathbf f_E(T)$ with respect to the emerald $T$-order (of $E$).
\end{coroll}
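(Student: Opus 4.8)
The plan is to read off the statement from Corollary \ref{cor:passziv_csucshoz_passziv_el} by producing an explicit bijection between the two sets being counted. All orders below are the emerald $T$-orders, of the edges of $\bip\HH$ on the one hand and of $E$ on the other.

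First I would define a map $\Phi$ from the set of internally inactive emerald nodes of $\mathbf f_E(T)$ to the set of internally semi-passive edges of $T$. Given an internally inactive emerald node $e$, Corollary \ref{cor:passziv_csucshoz_passziv_el} guarantees that there is exactly one internally semi-passive edge incident to $e$ (namely the first edge of the path in $T$ from $e$ to $b_0$); let $\Phi(e)$ be that edge. Since $\Phi(e)$ is always incident to $e$, and an edge has a single emerald endpoint, distinct nodes $e_1\ne e_2$ produce edges incident to distinct emerald nodes, so $\Phi(e_1)\ne\Phi(e_2)$; hence $\Phi$ is injective.

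For surjectivity I would take an arbitrary internally semi-passive edge $\varepsilon=ve$ of $T$. By the implication \ref{masodik}$\Rightarrow$\ref{harmadik} of Lemma \ref{l:activities_description}, its emerald endpoint $e$ is internally inactive for $\mathbf f_E(T)$ with respect to the emerald $T$-order on $E$, so $e$ lies in the domain of $\Phi$. Moreover $\varepsilon$ is an internally semi-passive edge incident to $e$, so the uniqueness clause of Corollary \ref{cor:passziv_csucshoz_passziv_el} forces $\Phi(e)=\varepsilon$. Thus $\Phi$ is onto, hence a bijection, and the two cardinalities agree, which is exactly the assertion of the corollary.

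Since this is little more than a repackaging of Corollary \ref{cor:passziv_csucshoz_passziv_el} (which itself rests on Lemma \ref{l:activities_description}), there is no genuine obstacle. The only point that deserves a word of care is the surjectivity step: one must invoke the equivalence \ref{masodik}$\Leftrightarrow$\ref{harmadik} of Lemma \ref{l:activities_description} to know that every internally semi-passive edge has its emerald endpoint internally inactive, and then appeal to uniqueness in Corollary \ref{cor:passziv_csucshoz_passziv_el} to identify that edge as the image of its emerald endpoint under $\Phi$.
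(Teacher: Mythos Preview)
Your proposal is correct and matches the paper's approach: the paper gives no separate proof of this corollary, treating it as immediate from Corollary~\ref{cor:passziv_csucshoz_passziv_el}, and your bijection $\Phi$ is exactly the natural way to make that immediacy explicit. One small simplification: for surjectivity you need not invoke Lemma~\ref{l:activities_description} directly, since the ``if'' direction of Corollary~\ref{cor:passziv_csucshoz_passziv_el} already tells you that the emerald endpoint of an internally semi-passive edge is internally inactive.
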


Next we turn to proving that simplices corresponding to Jaeger trees form a shellable dissection. First we show that they form a dissection. 

\begin{thm}\label{t:J-trees_form_quasitr}
For any ribbon structure (and base node and base edge) on the connected bipartite graph $G$, of color classes $E$ and $V$, the simplices in the root polytope $Q_G$ corresponding to the $V$-cut Jaeger trees form a dissection of $Q_G$.
\end{thm}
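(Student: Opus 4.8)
The plan is to prove the two defining properties of a dissection separately: first that the simplices $\{Q_T : T \text{ a } V\text{-cut Jaeger tree}\}$ cover $Q_G$, and second that their interiors are pairwise disjoint. For the covering, I would invoke the general principle recalled in Section~\ref{sec:root_polytope}: every maximal simplex in $Q_G$ contains, in its interior, exactly one emerald marker and one violet marker, which correspond essentially to the hypertrees $\mathbf f_E(T)$ and $\mathbf f_V(T)$ realized by the associated tree $T$ \cite[Lemma~14.9]{alex}. Since volume considerations show the number of maximal simplices in any dissection of $Q_G$ equals $|B_E|$, and Corollary~\ref{c:Jaeger_hypertree_uniquely_realized} tells us the $V$-cut Jaeger trees are in bijection with $B_E$ via $T \mapsto \mathbf f_E(T)$, I would argue as follows: if the $V$-cut Jaeger simplices failed to cover $Q_G$, then some lattice point deep inside (at large dilation) would not be covered, but by counting lattice points, the $|B_E|$ equal-volume simplices we have must account for the whole polytope provided their interiors are disjoint. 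So the heart of the matter is really the disjointness of interiors, from which covering follows by a volume/counting argument.

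For the disjointness of interiors, I would use the compatibility criterion for simplices in $Q_G$: two maximal simplices $Q_{T_1}$ and $Q_{T_2}$ have disjoint interiors if and only if there is no cycle in $G$ whose odd-indexed edges lie in $T_1$ and even-indexed edges lie in $T_2$ \cite[Lemma~12.6]{alex} (strictly, that lemma characterizes when they meet in a common face; one needs the slightly weaker statement that such an alternating cycle is exactly the obstruction to disjoint interiors, which is also established in \cite{alex}). So suppose $T_1 \ne T_2$ are both $V$-cut Jaeger trees and there is such an alternating cycle $O$. Without loss of generality $T_1 <_V T_2$ in the violet order of Definition~\ref{def:treeorder}; let $\varepsilon$ be the first edge where the violet tours diverge, so $\varepsilon \in T_2$, $\varepsilon \notin T_1$, and the tours agree up to reaching $\varepsilon$ at a violet current node $v$. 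By Lemma~\ref{l:subs_trees_cuts} (applied with the roles of $T, T'$ played by $T_2, T_1$), $\varepsilon$ has its violet endpoint in the base component of $T_2 - \varepsilon$, and every edge of $T_1$ lying in $C^*(T_2,\varepsilon)$ has its emerald endpoint in that base component. Now the alternating cycle $O$ contains $\varepsilon$ (since $\varepsilon \in C(T_1, \varepsilon)$ gives a cycle using $\varepsilon$ from $T_2$ and the rest from $T_1$, and any alternating cycle through a fixed edge of $T_2 \setminus T_1$ must cross the cut $C^*(T_2,\varepsilon)$); traversing $O$, the edges alternately belong to $T_2$ and $T_1$, so starting from $\varepsilon \in T_2$, going around $O$ one must eventually return, meaning $O$ crosses the cut $C^*(T_2,\varepsilon)$ an even number of times but with $\varepsilon$ the only $T_2$-edge forced to cross it an odd number of net times --- I would extract a contradiction from the color/position constraint that all $T_1$-edges in that cut sit on the emerald side, by a careful parity argument following the cycle $O$ and tracking which component of $T_2 - \varepsilon$ each vertex lies in.

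The cleanest way to run that last argument is probably to imitate the directed-auxiliary-graph technique from the proof of Lemma~\ref{l:no_cycl_in_Bernardi}: given the alternating cycle $O$ between $T_1$ and $T_2$, one builds a minimal directed cycle recording base-cut containments among the edges of $O \cap T_2$ and then performs simultaneous edge swaps to produce a spanning tree $T'$ that realizes $\mathbf f_E(T_2)$ yet omits some edge $\varepsilon$ of $T_2$ that the $V$-cut Jaeger condition (via Lemma~\ref{l:activities_description}, specifically the equivalence of \ref{elso} and \ref{otodik}, together with Theorem~\ref{t:hypertrees_unique_repr_by_J_trees}) forces to be in \emph{every} realization of $\mathbf f_E(T_2)$ by a $V$-cut Jaeger tree --- but this would already contradict uniqueness only if $T'$ were itself Jaeger, which it need not be. So I expect the actual mechanism to be: the existence of the alternating cycle lets one "rotate" $T_2$ into a distinct $V$-cut Jaeger tree realizing the same $\mathbf f_E$, directly contradicting Corollary~\ref{c:Jaeger_hypertree_uniquely_realized}. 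Making that rotation respect the Jaeger property is the main obstacle; I anticipate it requires re-running a Bernardi process on $\mathbf f_E(T_2)$ and arguing that the presence of $O$ would make the process keep a different edge at the divergence point, contradicting that Bernardi applied to $\mathbf f_E(T_2)$ must reproduce $T_2$. The covering half and the volume bookkeeping are routine once disjointness is in hand.
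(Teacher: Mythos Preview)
Your covering argument via volume is essentially the paper's, and you have correctly identified Lemma~\ref{l:subs_trees_cuts} as the key structural input. The gap is in your disjointness argument.

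The criterion you invoke from \cite[Lemma~12.6]{alex} is \emph{not} a characterization of when two maximal simplices have overlapping interiors; it characterizes when they intersect in a common face. Your parenthetical claim that ``an alternating cycle is exactly the obstruction to disjoint interiors'' is false, and the paper itself furnishes a counterexample: Example~\ref{ex:non-triangulation} exhibits two $V$-cut Jaeger trees (the second and third in Figure~\ref{fig:knot}) between which an alternating cycle exists---so their simplices do not meet in a common face---yet by the very theorem you are trying to prove, their interiors are disjoint. So any proof strategy that begins ``suppose there is an alternating cycle $O$ between $T_1$ and $T_2$, and derive a contradiction'' is doomed: such cycles genuinely occur among Jaeger trees. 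All of the mechanisms you sketch afterwards (parity along $O$, a directed auxiliary graph of base-cut containments, rotating $T_2$ into another Jaeger realization of $\mathbf f_E(T_2)$) are attempts to rule out something that is not, in fact, ruled out.

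The paper's route is both simpler and more direct: from $C^*(T_2,\varepsilon)$ one builds a linear functional $\kappa$ on $\R^E\oplus\R^V$ (assigning $\pm1$ to the standard basis vectors according to which side of the cut the node lies on, with signs flipped between the colors). This $\kappa$ is nonnegative on every vertex of $Q_{T_2}$, positive on the vertex corresponding to $\varepsilon$, and---precisely by the conclusion of Lemma~\ref{l:subs_trees_cuts} that all edges of $T_1\cap C^*(T_2,\varepsilon)$ have their emerald endpoint in the base component---nonpositive on every vertex of $Q_{T_1}$. Hence $\ker\kappa$ is a hyperplane weakly separating $Q_{T_1}$ from $Q_{T_2}$ and strictly separating their interiors. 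You already had the lemma that makes this work; the missing idea is to feed it into a hyperplane rather than into a combinatorial contradiction.
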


It turns out that we have already done the hard part of the proof and only the following, relatively easy part remains.

\begin{lemma}\label{lem:disjoint}
	For any two $V$-cut Jaeger trees, their corresponding maximal simplices in $Q_G$ have disjoint interiors.
\end{lemma}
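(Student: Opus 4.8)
I would argue by contradiction via the description of when two maximal simplices of a root polytope overlap: by Postnikov's criterion (\cite[Lemma 12.6]{alex}, quoted in Section~\ref{sec:root_polytope}), $Q_{T_1}$ and $Q_{T_2}$ have overlapping interiors if and only if there is a cycle $C$ in $G$ whose odd-position edges lie in $T_1$ and whose even-position edges lie in $T_2$ (an ``alternating cycle''). So suppose $T_1 \ne T_2$ are $V$-cut Jaeger trees with such an alternating cycle $C$. Consider the (violet) tours of $T_1$ and $T_2$ and the last moment at which they agree; say this happens when a violet node $v$ is current together with an edge $\varepsilon$ incident to $v$, with $\varepsilon \in T_1 \setminus T_2$ (relabel the trees so this holds), exactly as in Definition~\ref{def:treeorder}. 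By Lemma~\ref{l:subs_trees_cuts} (applied with the roles $T = T_1$, $T' = T_2$), $\varepsilon$ has its violet endpoint $v$ in the base component of $T_1 - \varepsilon$, and every edge of $T_2$ lying in the base cut $C^*(T_1,\varepsilon)$ has its \emph{emerald} endpoint in that base component.

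**Extracting the contradiction.** Now bring in the alternating cycle. Since $\varepsilon \in T_1$ and $\varepsilon \notin T_2$, and deleting $\varepsilon$ from $T_1$ splits it into the base component $T_0$ and the other part $T_1'$, the cycle $C$ must cross the cut $C^*(T_1,\varepsilon)$ an even number of times, but at least once through $\varepsilon$ itself; in particular $C$ contains $\varepsilon$ and at least one other edge $\varepsilon' \in C^*(T_1,\varepsilon)$. Because $C$ is alternating and $\varepsilon \in T_1$, the edge(s) of $C$ adjacent to $\varepsilon$ along the cycle lie in $T_2$; tracing around, the edges of $C$ in $C^*(T_1,\varepsilon)$ that are ``even-position'' relative to $\varepsilon$ belong to $T_2$. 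Pick such an edge $\varepsilon' \in C \cap C^*(T_1,\varepsilon) \cap T_2$. By the conclusion of Lemma~\ref{l:subs_trees_cuts}, $\varepsilon'$ has its emerald endpoint in the base component $T_0$; by part~\ref{egyegy} (or the ``moreover'' about part~\ref{kettoketto}) of Lemma~\ref{l:char_Jaeger_cuts}, such an edge was current (at its emerald endpoint) during the tour of $T_1$ strictly before $\varepsilon$ was reached, and hence also during the common initial portion of the tour of $T_2$ — so $\varepsilon'$ was cut before the tours diverged, forcing $\varepsilon' \notin T_2$. This contradicts $\varepsilon' \in T_2$, so no alternating cycle exists and the interiors of $Q_{T_1}$ and $Q_{T_2}$ are disjoint.

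**Where the work is.** The genuinely load-bearing inputs are Lemma~\ref{l:subs_trees_cuts} and Lemma~\ref{l:char_Jaeger_cuts} — essentially all the combinatorial content about how a $V$-cut tour interacts with a base cut has already been packaged there, so the proof of this lemma is short. The one point that needs a little care is the parity bookkeeping: one must check that an alternating cycle through $\varepsilon$ really does hand us an edge $\varepsilon' \in C^*(T_1,\varepsilon) \cap T_2$ sitting in the base cut on the ``$T_2$ side,'' rather than the alternation being frustrated. This is a routine but not entirely automatic observation: since $C$ alternates between $T_1$ and $T_2$ and $T_1 - \varepsilon$ is disconnected, the segments of $C$ between consecutive crossings of the cut alternate in color-pattern in a controlled way, and in particular among the $\ge 2$ cut-edges on $C$ at least one lies in $T_2$. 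Once that combinatorial point is nailed down, the argument closes as above. I would expect the parity argument to be the only real obstacle; everything else is a direct appeal to the two earlier lemmas.
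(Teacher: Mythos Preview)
Your approach cannot succeed, because it would prove too much. You assume an alternating cycle between $T_1$ and $T_2$ and attempt to derive a contradiction; if that worked, it would show (via Postnikov's criterion) that any two $V$-cut Jaeger trees are compatible, i.e., that Jaeger trees always \emph{triangulate} $Q_G$. But Examples~\ref{ex:non-triangulation} and~\ref{ex:otszog} exhibit pairs of $V$-cut Jaeger trees that \emph{do} admit an alternating cycle. So the contradiction you aim for is simply unreachable in general, and the argument must break somewhere.

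It breaks in two places. First, you assert without justification that the alternating cycle $C$ contains $\varepsilon$; nothing forces this, as $C$ may cross the cut $C^*(T_1,\varepsilon)$ through other edges (or not at all). Second, and fatally, you misapply Lemma~\ref{l:char_Jaeger_cuts}: for $\varepsilon'\in C^*(T_1,\varepsilon)$ with its \emph{emerald} endpoint in the base component, part~\ref{egyegy} gives $\varepsilon <_{T_1,V} \varepsilon'$, not the reverse. Since $\varepsilon'\notin T_1$ and $T_1$ is $V$-cut, $\varepsilon'$ is first current at its violet endpoint $v'$, which lies in the non-base component; by Lemma~\ref{l:parents_in_T-order} this happens only after $\varepsilon$ is traversed, and the visit to $(e',\varepsilon')$ comes later still. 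Thus $\varepsilon'$ is never current during the common initial segment of the two tours, and you cannot conclude $\varepsilon'\notin T_2$. (Indeed, Lemma~\ref{l:subs_trees_cuts} is precisely the statement that such $\varepsilon'\in T_2$ with emerald endpoint in the base component \emph{do} exist.)

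The paper's proof avoids the compatibility question entirely and builds a separating hyperplane. With your labeling ($\varepsilon\in T_1$), one uses the cut $C^*(T_1,\varepsilon)$ to define a linear functional $\kappa$ on $\R^E\oplus\R^V$: it takes the value $2$ at the vertex of $Q_G$ corresponding to $\varepsilon$, vanishes on the remaining vertices of $Q_{T_1}$, and by Lemma~\ref{l:subs_trees_cuts} is nonpositive on every vertex of $Q_{T_2}$. Hence $\{\kappa=0\}$ separates the interiors of $Q_{T_1}$ and $Q_{T_2}$, with no claim that their intersection is a common face.
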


\begin{proof}
	Take two $V$-cut Jaeger trees $T_1$ and $T_2$, and suppose that their tours coincide until reaching $\varepsilon$, where $\varepsilon\notin T_1$ and $\varepsilon\in T_2$. It suffices to show that the maximal simplices that correspond to the pair of trees are separated by a hyperplane. Such hyperplanes may be constructed using cuts in $G$. This is described in \cite[Section 3]{KP_Ehrhart} for `directed' cuts, when the result is a supporting hyperplane of $Q_G$; here we will work with undirected cuts.
	
	Let $E_1\sqcup E_2$ be a partition of $E$ and $V_1\sqcup V_2$ a partition of $V$. Let us associate the real number $-1$ to those basis vectors of $\R^E\oplus\R^V$ that correspond to elements of $E_1\cup V_2$. Similarly, we associate $1$ to elements of $E_2\cup V_1$. This has a unique linear extension $\kappa\colon\R^E\oplus\R^V\to\R$. The value of $\kappa$ at the vertices of $Q_G$ (described in terms of the corresponding edges of $G$) is then
	\begin{itemize}
		\item $0$ for edges outside the cut defined by our partitions (when we split $E\cup V$ into $E_1\cup V_1$ and $E_2\cup V_2$), i.e., for edges between $E_1$ and $V_1$ or between $E_2$ and $V_2$,
		\item $-2$ for edges of the cut that are between $E_1$ and $V_2$, and
		\item $2$ for edges of the cut between $E_2$ and $V_1$.
	\end{itemize}
	Thus the kernel of $\kappa$ is a hyperplane that contains all vertices of $Q_G$ except for the ones that correspond to edges of the cut; the remaining vertices fall on the two sides of the hyperplane depending on whether their emerald or violet endpoint lies on an arbitrarily fixed side of the cut.
	
	Now to construct the required hyperplane, we consider the cut $C^*(T_2,\varepsilon)$. More precisely, let $E_1\cup V_1$ be the node set of the connected component of $T_2-\varepsilon$ which contains the violet endpoint $v$ of $\varepsilon$. Let $E_2\cup V_2$ be the node set of the other component. Then the corresponding functional $\kappa$ takes only non-negative values at the vertices of the maximal simplex $Q_{T_2}$: the value is $2$ for (the vertex corresponding to) $\varepsilon$ and $0$ for all other edges of $T_2$, i.e., vertices of $Q_{T_2}$. 
	
	We claim that the opposite is true for $T_1$: at the vertices of $Q_G$ corresponding to those edges, all values of $\kappa$ end up non-positive. To show this, we need to ascertain that all edges of $C^*(T_2,\varepsilon)\cap T_1$ are connecting $E_1$ and $V_2$. But that is exactly the statement of Lemma \ref{l:subs_trees_cuts}. 
\end{proof}

\begin{proof}[Proof of Theorem \ref{t:J-trees_form_quasitr}]
    By Proposition \ref{p:B-trees_are_J-trees}, each hypertree is realized by at least one $V$-cut Jaeger tree and hence the number of $V$-cut Jaeger trees is at least the number of hypertrees in $B_V$. (In fact in subsection \ref{ssec:Jaeger->Bernardi} we have showed that these numbers are equal, but the present proof does not rely on that.) By Lemma \ref{lem:disjoint}, the (maximal) simplices in $Q_G$ corresponding to the $V$-cut Jaeger trees have disjoint interiors. Moreover, each maximal simplex in $Q_G$ has the same volume \cite[Section 12]{alex} and the volume of $Q_G$ itself is the number of hypertrees times this common volume. Hence the simplices corresponding to $V$-cut Jaeger trees fill $Q_G$, in other words, they form a dissection. 
\end{proof}

This reasoning also implies that each hypertree is realized by at most one Jaeger tree, which we have already proven in Theorem \ref{t:hypertrees_unique_repr_by_J_trees}. Having a dissection allows us to use Theorem \ref{l:quasitriang_gives_bijection} and to thus give a new proof of Corollary \ref{c:Jaeger_hypertree_uniquely_realized}. But to prove Theorem \ref{thm:Bernardi-interior_well_def}, we need the shellability of the dissection, too.

\begin{thm}\label{thm:shelling}
The violet ordering of the $V$-cut Jaeger trees induces a shelling order of the dissection given in Theorem \ref{t:J-trees_form_quasitr}. For each $V$-cut Jaeger tree $T$, the number of facets of the corresponding simplex $Q_{T}$, that lie in the union of previous simplices of the shelling, equals the number of internally semi-passive edges in $T$ with respect to the emerald $T$-order of the edges.
\end{thm}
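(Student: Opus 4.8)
The plan is to establish two things: first, that the violet ordering is a shelling order, meaning each simplex $Q_T$ (past the first) meets the union of earlier simplices in a non-empty union of facets of $Q_T$; and second, that the facets in question are exactly those opposite to the internally semi-passive edges of $T$ in the emerald $T$-order. The natural bridge between these is that the facets of $Q_T$ correspond bijectively to the edges of $T$ (a facet is spanned by all but one vertex of the simplex, i.e., by $T \setminus \{\varepsilon\}$ for $\varepsilon \in T$), and that a facet $Q_{T \setminus \{\varepsilon\}}$ lies in the union of earlier simplices precisely when $\varepsilon$ arises as a first difference between $T$ and some $V$-cut Jaeger tree $T'$ preceding $T$. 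Once that equivalence is in hand, Lemma \ref{l:activities_description} (the equivalence of conditions \ref{elso} and \ref{masodik}) immediately converts ``first difference with an earlier tree'' into ``internally semi-passive with respect to the emerald $T$-order,'' which gives the count, and Corollary \ref{cor:passziv_szemipassziv} ties it to internally inactive emerald nodes.

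So the core task is the geometric statement: for $\varepsilon \in T$, the facet $F_\varepsilon = Q_{T\setminus\{\varepsilon\}}$ of $Q_T$ is contained in $\bigcup_{T' <_V T} Q_{T'}$ if and only if there is a $V$-cut Jaeger tree $T'$ with $T' <_V T$ whose tour agrees with that of $T$ up to $\varepsilon$ and with $\varepsilon \notin T'$. The ``if'' direction is the easy half: given such a $T'$, I would argue that $F_\varepsilon$ is actually a facet of $Q_{T'}$ as well (or more precisely is covered by $Q_{T'}$). The point is that $T$ and $T'$ differ by the single edge $\varepsilon$ being replaced by some $\varepsilon' \in C^*(T,\varepsilon) \cap T'$; the affine hull of $T \setminus \{\varepsilon\}$ separates $\varepsilon$ from $\varepsilon'$ by the hyperplane construction of Lemma \ref{lem:disjoint}, so $T \setminus \{\varepsilon\}$ is a common facet-defining set and $F_\varepsilon \subseteq Q_{T'}$. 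For the ``only if'' direction, I would use that $\bigcup_{T'} Q_{T'}$ over ALL $V$-cut Jaeger trees is the whole root polytope $Q_G$ (Theorem \ref{t:J-trees_form_quasitr}), that the dissection has disjoint interiors, and that $F_\varepsilon$ is a facet of $Q_T$ hence lies on the boundary of $Q_T$; any point in the relative interior of $F_\varepsilon$ that is also in the interior of $Q_G$ must lie in some other simplex $Q_{T'}$, and since interiors are disjoint it must lie on a facet of $Q_{T'}$ too, forcing $T$ and $T'$ to share the facet $T \setminus \{\varepsilon\}$; then I must check $T' <_V T$, which follows because the shared facet means they differ only in $\varepsilon$ versus some $\varepsilon'$, and tracking the tours (using Lemma \ref{l:char_Jaeger_cuts} to locate when $\varepsilon$ is first reached) shows the first difference favors the tree not containing $\varepsilon$ — and since $\varepsilon \in T$, that is $T'$. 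I also need to dispose of the case where $F_\varepsilon$ lies entirely on $\partial Q_G$: there one shows the corresponding edge $\varepsilon$ has \emph{both} endpoints on one side of every relevant cut, i.e. $\varepsilon$ is the largest in $C^*(T,\varepsilon)$ in the violet $T$-order, which by condition \ref{negyedik} of Lemma \ref{l:activities_description} is exactly the case where $\varepsilon$ is \emph{not} semi-passive and the facet should \emph{not} be counted — consistent with it being on the global boundary.

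Having shown the set of ``previously-covered'' facets of $Q_T$ is exactly $\{F_\varepsilon : \varepsilon \in T \text{ internally semi-passive w.r.t.\ emerald } T\text{-order}\}$, the facet count follows. To complete the shelling verification I still need non-emptiness of this union of facets for every $T$ except the first. Equivalently, every non-minimal $V$-cut Jaeger tree $T$ has at least one internally semi-passive edge. I would derive this from the tree-order structure: if $T$ is not the $<_V$-minimal Jaeger tree, there is some $T' <_V T$, and the first difference edge between $T$ and $T'$ (which is in $T$, not $T'$, by definition of $<_V$ when $T' <_V T$) is then internally semi-passive in $T$ by the \ref{elso}$\Rightarrow$\ref{masodik} implication of Lemma \ref{l:activities_description}. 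Finally I should confirm the minimal tree has $a_0 = 1$ worth of zero such facets (it trivially has no earlier simplices), matching the convention in the definition of the $h$-vector.

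I expect the main obstacle to be the ``only if'' direction of the geometric equivalence — specifically, ruling out pathologies where a point in the relative interior of the facet $F_\varepsilon$ is covered by another simplex $Q_{T'}$ not along a facet of $Q_{T'}$ (which cannot happen in a dissection with disjoint interiors, but needs the disjoint-interiors statement applied carefully, perhaps passing to a generic interior point of $F_\varepsilon$ and using that $F_\varepsilon$ has codimension one so its relative interior meets $\operatorname{int} Q_G$ unless $F_\varepsilon \subseteq \partial Q_G$), and in the bookkeeping that shows the resulting $T'$ genuinely precedes $T$ in the violet order rather than following it. The rest is a fairly direct assembly of Lemmas \ref{lem:disjoint}, \ref{l:subs_trees_cuts}, \ref{l:char_Jaeger_cuts}, \ref{l:activities_description} and Corollary \ref{cor:passziv_szemipassziv}, together with Theorem \ref{t:J-trees_form_quasitr}.
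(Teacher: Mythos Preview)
Your ``if'' direction contains a real error. You claim that when $T' <_V T$ with first difference $\varepsilon$, the trees $T$ and $T'$ ``differ by the single edge $\varepsilon$ being replaced by some $\varepsilon' \in C^*(T,\varepsilon) \cap T'$,'' so that $T\setminus\{\varepsilon\}\subset T'$ and $F_\varepsilon\subset Q_{T'}$. This is false. The tours of $T$ and $T'$ agree only up to $\varepsilon$; after that point the two trees may diverge arbitrarily. In particular the entire non-base component $T_1$ of $T-\varepsilon$ need not lie in $T'$, so $F_\varepsilon=Q_{T_0\cup T_1}$ is typically \emph{not} a face of $Q_{T'}$. (Remember the dissection is usually not a triangulation, cf.\ Examples \ref{ex:non-triangulation} and \ref{ex:otszog}, so there is no reason adjacent simplices should share a full facet.) A single earlier Jaeger tree does not suffice to cover $F_\varepsilon$.

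The paper handles this by building a whole \emph{family} of earlier Jaeger trees. Fixing the specific $\varepsilon'\in C^*(T,\varepsilon)$ with emerald end in $T_0$ that is reached first in the violet tour, one takes $\mathcal T=\{\,T_0\cup\{\varepsilon'\}\cup T_1'\mid T_1'\in\mathcal J_1\,\}$, where $\mathcal J_1$ is the set of \emph{all} $V$-cut Jaeger trees of the induced subgraph $G_1$ on the node set of $T_1$ (with the appropriate base data). The \ref{otodik}$\Rightarrow$\ref{elso} argument of Lemma \ref{l:activities_description} shows every member of $\mathcal T$ is a $V$-cut Jaeger tree preceding $T$. Any $\mathbf p\in Q_{T-\varepsilon}$ splits as a convex combination of a point of $Q_{T_0}$ and a point of $Q_{T_1}\subset Q_{G_1}$; since the $Q_{T_1'}$ dissect $Q_{G_1}$ (Theorem \ref{t:J-trees_form_quasitr} applied to $G_1$), one gets $\mathbf p\in\conv(Q_{T_0}\cup Q_{G_1})=\bigcup_{T_1'\in\mathcal J_1}\conv(Q_{T_0}\cup Q_{T_1'})\subset\bigcup_{T'\in\mathcal T}Q_{T'}$. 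This recursive use of the dissection on the smaller graph is the idea your argument is missing.

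For the other direction the paper's route is also more direct than yours: if $\varepsilon$ is internally semi-active then (by \ref{elso} of Lemma \ref{l:activities_description}) it is never the first difference with any $T'<_V T$; the separating hyperplane of Lemma \ref{lem:disjoint} is then $\ker\kappa\supset Q_{T-\varepsilon''}$ for the actual first difference $\varepsilon''\neq\varepsilon$, and since the vertex of $Q_{T-\varepsilon}$ corresponding to $\varepsilon''$ has $\kappa$-value $2$, the relative interior of $Q_{T-\varepsilon}$ lies strictly on the positive side, disjoint from $Q_{T'}$. Your geometric approach (pick a generic point of $F_\varepsilon$, find another simplex containing it, deduce a shared facet) again presupposes triangulation-like behaviour that the dissection need not have.
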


In other (more precise) words, the edges of $T$ described in five equivalent ways in Lemma \ref{l:activities_description} correspond exactly to those vertices of $Q_T$ whose opposite facets make up the intersection of $Q_T$ with the union of the previous simplices.

\begin{proof}
Let us fix a $V$-cut Jaeger tree $T$. We have to show that for an edge $\varepsilon\in T$, if $\varepsilon$ is internally semi-passive in $T$ with respect to the emerald $T$-order, then the facet $Q_{T-\varepsilon}$ of the simplex $Q_T$ lies in the union of the simplices corresponding to $V$-cut Jaeger trees preceding $T$ in the violet order. We also need to prove that if $\varepsilon$ is internally semi-active with respect to the emerald $T$-order, then the interior of $Q_{T-\varepsilon}$ is disjoint from $\bigcup_{T'<_V T}Q_{T'}$.

Take an edge $\varepsilon=ev \in T$ (where $e\in E$ and $v\in V$) such that $\varepsilon$ is internally semi-passive in $T$ with respect to the emerald $T$-order. In the same way as in the proof of Lemma \ref{lem:disjoint}, we consider the fundamental cut and the linear functional defined by $T$ and $\varepsilon$. Let us partition the edges in $C^*(T,\varepsilon)$ into the sets $P$ and $N$, where elements of $P$ have their violet endpoint in the root component $T_0$ of $T-\varepsilon$ and elements of $N$ have their emerald endpoint there. (This is the same partition as the one induced by the two sides, positive and negative, of the hyperplane that corresponds to the cut.) By Lemma \ref{l:activities_description}, condition \ref{otodik}, we have $\varepsilon\in P$ and $N\neq \varnothing$. 

Let us consider the edge $\varepsilon'\in N$ as in the part \ref{otodik} $\Rightarrow$ \ref{elso} of the proof of Lemma \ref{l:activities_description}. Namely, $\varepsilon'$ is the first edge of $C^*(T,\varepsilon)$ to become current in the tour of $T$ after the second traversal of $\varepsilon$. This time let us call its emerald endpoint $y\in T_0$. Let, as usual, $T_1$ be the complement of $T_0$ in $T-\varepsilon$ and let $G_1$ be the subgraph spanned by the node set of $T_1$. Let also $x\in G_1$ be the violet endpoint of $\varepsilon'$. Let us denote the set of $V$-cut Jaeger trees of $G_1$ with base node $x$ and base edge $xy'$ by $\mathcal{T}_1$, where $xy'$ is the first edge following $xy$ in the cyclic order at $x$ such that $xy'\in G_1$. As $G_1$ is connected and contains at least two nodes, $e$ and $x$, such an edge exists. Finally, consider the following set of trees: 
\begin{equation}
\label{eq:sokfa}
\mathcal{T}=\{\,T_0\cup \varepsilon' \cup T'_1\mid T'_1 \in \mathcal{T}_1\,\}.
\end{equation}
From the part \ref{otodik} $\Rightarrow$ \ref{elso} of the proof of Lemma \ref{l:activities_description}, we know that all of these are $V$-cut Jaeger trees preceding $T$ in the violet order of Jaeger trees. We claim that 
\begin{equation*}
Q_{T-\varepsilon}\subset \bigcup_{T'\in \mathcal{T}}Q_{T'}.
\end{equation*}

Take a point $\mathbf p$ from $Q_{T - \varepsilon}$. Then 
\[\mathbf p=\sum_{e'v'\in T - \varepsilon}\lambda_{e'v'} (\mathbf{e'}+\mathbf{v'})=\sum_{e'v'\in T_0}\lambda_{e'v'} (\mathbf{e'}+\mathbf{v'}) + \sum_{e'v'\in T_1}\lambda_{e'v'} (\mathbf{e'}+\mathbf{v'}),\]
where $\lambda_{e'v'}\geq 0$ and $\sum_{e'v'\in T - \varepsilon}\lambda_{e'v'}=1$. Let $\lambda_0=\sum_{e'v'\in T_0}\lambda_{e'v'}$ and $\lambda_1=\sum_{e'v'\in T_1}\lambda_{e'v'}$. If $\lambda_0=0$ then $\mathbf p\in Q_{G_1}$, which by Theorem \ref{t:J-trees_form_quasitr} is dissected by the faces $Q_{T_1'}$ of the simplices $Q_{T'}$, for $T'\in\mathcal T$ as in \eqref{eq:sokfa}, making $\mathbf p\in \bigcup_{T'\in \mathcal{T}}Q_{T'}$ obvious. If $\lambda_1=0$, then $\mathbf p\in Q_{T_0}$, which in turn is part of all the $Q_{T'}$. Otherwise, write
\[\mathbf p=\lambda_0\cdot\sum_{e'v'\in T_0}\frac{\lambda_{e'v'}}{\lambda_0} (\mathbf{e'}+\mathbf{v'}) + \lambda_1\cdot \sum_{e'v'\in T_1}\frac{\lambda_{e'v'}}{\lambda_1} (\mathbf{e'}+\mathbf{v'})\]
and let $\mathbf p_0=\sum_{e'v'\in T_0}\frac{\lambda_{e'v'}}{\lambda_0} (\mathbf{e'}+\mathbf{v'})$ and $\mathbf p_1=\sum_{e'v'\in T_1}\frac{\lambda_{e'v'}}{\lambda_1} (\mathbf{e'}+\mathbf{v'})$. That is, $\mathbf p=\lambda_0\mathbf p_0 + \lambda_1\mathbf p_1$, where $\lambda_0,\lambda_1>0$ and $\lambda_0 + \lambda_1=1$. Furthermore, we have $\mathbf p_1\in Q_{T_1}\subset Q_{G_1}$ and $\mathbf p_0\in Q_{T_0}$. Combining this with the convexity of $Q_{G_1}=\bigcup_{T'_1\in \mathcal{T}_1}Q_{T'_1}$, we obtain
\begin{multline*}
\bigcup_{T'\in \mathcal{T}}Q_{T'}\supset \bigcup_{T'\in \mathcal{T}}Q_{T'-\varepsilon'}=\bigcup_{T'_1\in \mathcal{T}_1} \conv(Q_{T_0}\cup Q_{T'_1})=\conv\left(Q_{T_0}\cup\bigcup_{T'_1\in \mathcal{T}_1}Q_{T'_1}\right)\\
=\conv(Q_{T_0} \cup Q_{G_1})\ni \mathbf p.
\end{multline*}

Now if $\varepsilon$ is internally semi-active with respect to the emerald $T$-order, then by Lemma \ref{l:activities_description}, condition \ref{elso}, $\varepsilon$ cannot be obtained as the first difference between $T$ and some Jaeger tree preceding $T$ in the violet ordering of Jaeger trees. Moreover, by the proof of Lemma \ref{lem:disjoint}, for any Jaeger tree $T'$ preceding $T$ in the violet ordering of Jaeger trees, $Q_{T}$ is separated from $Q_{T'}$ by the hyperplane containing the facet $Q_{T - \varepsilon''}$ where $\varepsilon''$ is the first difference between $T$ and $T'$. Therefore the interior of $Q_{T - \varepsilon}$ is indeed disjoint from $Q_{T'}$ and hence from $\bigcup_{T'<_V T}Q_{T'}$, too. 
\end{proof}

It is finally time to summarize our findings and prove our main theorem.

\begin{proof}[Proof of Theorem \ref{thm:Bernardi-interior_well_def}]
By Theorem \ref{t:interior_poly_ehrhart_h-vector} and the discussion preceding it, the coefficients of the interior polynomial $I_\HH$ are the entries in the $h$-vector of any shellable dissection (together with any shelling order) of $Q_{\bip \HH}$. Hence it suffices to show that there exists a shellable dissection of $Q_{\bip \HH}$, with a shelling order yielding the $h$-vector $(a_0,a_1,\dots )$, so that $a_i$ is equal to the number of hypertrees of $\HH$ with internal embedding inactivity $i$ (defined using our fixed ribbon structure and the (ht:$E$, cut:$E$) Bernardi process).

We choose the dissection of $Q_{\bip \HH}$ by $E$-cut Jaeger trees. By Lemma \ref{c:set_of_emerald_trees_eq_violet}, these are the same as the $V$-cut Jaeger trees of the \emph{reversed} ribbon structure (with base node $b_0$ and base edge $b_0b_1^-$). The \emph{violet} order of Jaeger trees will be our shelling order, as in Theorem \ref{thm:shelling}.

By Remark \ref{f:orders} (whose statements we already \emph{can} justify because we have Theorem \ref{t:J-trees_are_B-trees} in hand) and Corollary \ref{cor:passziv_szemipassziv}, a hypertree on $E$ has internal embedding inactivity $i$ with respect to the (ht:$E$, cut:$E$) Bernardi process if and only if the $E$-cut Jaeger tree $T$ realizing it (i.e., the outcome of the process) has precisely $i$ internally semi-passive edges with respect to the emerald $T$-order. Now the statement follows from the second sentence in Theorem \ref{thm:shelling}.
\end{proof}

\section{Examples}
\label{sec:examples}

In this section we recall two ways of triangulating some root polytopes and show that both are special cases of the dissections of Section \ref{sec:Jaeger_trees}. But before that, let us work out two concrete examples where the dissection fails to be a triangulation.

\begin{figure}[h]
\begin{tikzpicture}[scale=.25]

\begin{scope}[shift={(-2,0)}]

\path [fill=lightgray] (0,4.25) to [out=225,in=120] (-1,1.5) to [out=-60,in=200] (2,1) to [out=110,in=-110] (2,3) to [out=160,in=-45] (0,4.25);
\path [fill=lightgray] (6,1) to [out=-20,in=-120] (9,1.5) to [out=60,in=-45] (8,4.25) to [out=225,in=20] (6,3) to [out=-70,in=70] (6,1);
\path [fill=lightgray] (6,7.5) to [out=110,in=0] (4,9.7) to [out=180,in=70] (2,7.5) to [out=-20,in=135] (4,6.25) to [out=45,in=200] (6,7.5);
\path [fill=gray] (2,3) to [out=70,in=225] (4,6.25) to [out=-45,in=110] (6,3) to [out=200,in=-20] (2,3);
\path [fill=gray] (6,7.5) to [out=20,in=120] (9,7) to [out=-60,in=45] (8,4.25) to [out=135,in=-70] (6,7.5);
\path [fill=gray] (0,4.25) to [out=135,in=-120] (-1,7) to [out=60,in=160] (2,7.5) to [out=-110,in=45] (0,4.25);
\path [fill=gray] (6,1) to [out=-110,in=0] (4,-1.2) to [out=180,in=-70] (2,1) to [out=20,in=160] (6,1);

\draw [ultra thick, red] (0,2) -- (4,0);
\draw [ultra thick, red] (4,0) -- (8,2);
\draw [ultra thick, red] (8,2) -- (8,6.5);
\draw [ultra thick, red] (8,6.5) -- (4,8.5);
\draw [ultra thick, red] (4,8.5) -- (0,6.5);
\draw [ultra thick, red] (0,6.5) -- (0,2);
\draw [ultra thick, red] (0,2) -- (4,4);
\draw [ultra thick, red] (4,4) -- (4,8.5);
\draw [ultra thick, red] (4,4) -- (8,2);

\draw [fill=green,green] (4, 0) circle [radius=0.4];
\draw [fill=green,green] (4, 4) circle [radius=0.4];
\draw [fill=green,green] (0, 6.5) circle [radius=0.4];
\draw [fill=green,green] (8, 6.5) circle [radius=0.4];
\draw [fill=blue,blue] (0, 2) circle [radius=0.4];
\draw [fill=blue,blue] (8, 2) circle [radius=0.4];
\draw [fill=blue,blue] (4, 8.5) circle [radius=0.4];
\draw [fill] (-1,1.5) circle [radius=.3];

\draw [ultra thick] (-.15,4.1) to [out=225,in=120] (-1,1.5) to [out=-60,in=200] (2,1) to [out=20,in=160] (5.8,1.1);
\draw [ultra thick] (6.2,.9) to [out=-20,in=-120] (9,1.5) to [out=60,in=-45] (8,4.25) to [out=135,in=-70] (6.1,7.3);
\draw [ultra thick] (5.9,7.7) to [out=110,in=0] (4,9.7) to [out=180,in=70] (2,7.5) to [out=-110,in=45] (.15,4.4);
\draw [ultra thick] (1.9,1.2) to [out=110,in=-110] (2,3) to [out=70,in=225] (3.85,6.1);
\draw [ultra thick] (4.15,6.4) to [out=45,in=200] (6,7.5) to [out=20,in=120] (9,7) to [out=-60,in=45] (8.15,4.4);
\draw [ultra thick] (7.85,4.1) to [out=225,in=20] (6,3) to [out=200,in=-20] (2.2,2.9);
\draw [ultra thick] (1.8,3.1) to [out=160,in=-45] (0,4.25) to [out=135,in=-120] (-1,7) to [out=60,in=160] (1.8,7.6);
\draw [ultra thick] (2.2,7.4) to [out=-20,in=135] (4,6.25) to [out=-45,in=110] (5.9,3.2);
\draw [ultra thick] (6.1,2.8) to [out=-70,in=70] (6,1) to [out=-110,in=0] (4,-1.2) to [out=180,in=-70] (2.1,.8);

\draw [->,thick] (-2,2) arc [radius=2,start angle=180,end angle=240];

\end{scope}

\begin{scope}[shift={(11,0)}]

\draw [ultra thick, red] (4,0) -- (8,2);
\draw [ultra thick, red] (8,2) -- (8,6.5);
\draw [ultra thick, red] (8,6.5) -- (4,8.5);
\draw [ultra thick, red] (4,8.5) -- (0,6.5);
\draw [ultra thick, red] (0,6.5) -- (0,2);
\draw [ultra thick, red] (4,4) -- (8,2);

\draw [fill=green,green] (4, 0) circle [radius=0.4];
\draw [fill=green,green] (4, 4) circle [radius=0.4];
\draw [fill=green,green] (0, 6.5) circle [radius=0.4];
\draw [fill=green,green] (8, 6.5) circle [radius=0.4];
\draw [fill=blue,blue] (0, 2) circle [radius=0.4];
\draw [fill=blue,blue] (8, 2) circle [radius=0.4];
\draw [fill=blue,blue] (4, 8.5) circle [radius=0.4];

\node at (2.5,0) {\small $e_0$};
\node at (9,7.5) {\small $e_1$};
\node at (-1,7.5) {\small $e_2$};
\node at (2.5,4) {\small $e_3$};
\node at (-1,1) {\small $v_0$};
\node at (9,1) {\small $v_1$};
\node at (4,7) {\small $v_2$};
\node at (4,-2) {\small $T_1$};

\end{scope}

\begin{scope}[shift={(23,0)}]

\path [fill=pink] (4,4) -- (8,2) -- (8,6.5) -- (4,8.5);

\draw [ultra thick, red] (4,0) -- (8,2);
\draw [ultra thick, red] (8,6.5) -- (4,8.5);
\draw [ultra thick, red] (4,8.5) -- (0,6.5);
\draw [ultra thick, red] (0,6.5) -- (0,2);
\draw [ultra thick, red] (4,4) -- (4,8.5);
\draw [ultra thick, red] (4,4) -- (8,2);

\draw [fill=green,green] (4, 0) circle [radius=0.4];
\draw [fill=green,green] (4, 4) circle [radius=0.4];
\draw [fill=green,green] (0, 6.5) circle [radius=0.4];
\draw [fill=green,green] (8, 6.5) circle [radius=0.4];
\draw [fill=blue,blue] (0, 2) circle [radius=0.4];
\draw [fill=blue,blue] (8, 2) circle [radius=0.4];
\draw [fill=blue,blue] (4, 8.5) circle [radius=0.4];

\node at (4,-2) {\small $T_2$};

\end{scope}

\begin{scope}[shift={(35,0)}]

\path [fill=pink] (4,4) -- (8,2) -- (8,6.5) -- (4,8.5);

\draw [ultra thick, red] (4,0) -- (8,2);
\draw [ultra thick, red] (8,2) -- (8,6.5);
\draw [ultra thick, red] (0,6.5) -- (0,2);
\draw [ultra thick, red] (0,2) -- (4,4);
\draw [ultra thick, red] (4,4) -- (4,8.5);
\draw [ultra thick, red] (4,4) -- (8,2);

\draw [fill=green,green] (4, 0) circle [radius=0.4];
\draw [fill=green,green] (4, 4) circle [radius=0.4];
\draw [fill=green,green] (0, 6.5) circle [radius=0.4];
\draw [fill=green,green] (8, 6.5) circle [radius=0.4];
\draw [fill=blue,blue] (0, 2) circle [radius=0.4];
\draw [fill=blue,blue] (8, 2) circle [radius=0.4];
\draw [fill=blue,blue] (4, 8.5) circle [radius=0.4];

\node at (4,-2) {\small $T_3$};

\end{scope}

\begin{scope}[shift={(0,-13)}]

\draw [ultra thick, red] (4,0) -- (8,2);
\draw [ultra thick, red] (8,2) -- (8,6.5);
\draw [ultra thick, red] (8,6.5) -- (4,8.5);
\draw [ultra thick, red] (0,6.5) -- (0,2);
\draw [ultra thick, red] (0,2) -- (4,4);
\draw [ultra thick, red] (4,4) -- (4,8.5);

\draw [fill=green,green] (4, 0) circle [radius=0.4];
\draw [fill=green,green] (4, 4) circle [radius=0.4];
\draw [fill=green,green] (0, 6.5) circle [radius=0.4];
\draw [fill=green,green] (8, 6.5) circle [radius=0.4];
\draw [fill=blue,blue] (0, 2) circle [radius=0.4];
\draw [fill=blue,blue] (8, 2) circle [radius=0.4];
\draw [fill=blue,blue] (4, 8.5) circle [radius=0.4];

\end{scope}

\begin{scope}[shift={(11,-13)}]

\draw [ultra thick, red] (0,2) -- (4,0);
\draw [ultra thick, red] (4,0) -- (8,2);
\draw [ultra thick, red] (8,6.5) -- (4,8.5);
\draw [ultra thick, red] (4,8.5) -- (0,6.5);
\draw [ultra thick, red] (0,6.5) -- (0,2);
\draw [ultra thick, red] (4,4) -- (4,8.5);

\draw [fill=green,green] (4, 0) circle [radius=0.4];
\draw [fill=green,green] (4, 4) circle [radius=0.4];
\draw [fill=green,green] (0, 6.5) circle [radius=0.4];
\draw [fill=green,green] (8, 6.5) circle [radius=0.4];
\draw [fill=blue,blue] (0, 2) circle [radius=0.4];
\draw [fill=blue,blue] (8, 2) circle [radius=0.4];
\draw [fill=blue,blue] (4, 8.5) circle [radius=0.4];

\end{scope}

\begin{scope}[shift={(22,-13)}]

\draw [ultra thick, red] (0,2) -- (4,0);
\draw [ultra thick, red] (4,0) -- (8,2);
\draw [ultra thick, red] (8,6.5) -- (4,8.5);
\draw [ultra thick, red] (0,6.5) -- (0,2);
\draw [ultra thick, red] (0,2) -- (4,4);
\draw [ultra thick, red] (4,4) -- (4,8.5);

\draw [fill=green,green] (4, 0) circle [radius=0.4];
\draw [fill=green,green] (4, 4) circle [radius=0.4];
\draw [fill=green,green] (0, 6.5) circle [radius=0.4];
\draw [fill=green,green] (8, 6.5) circle [radius=0.4];
\draw [fill=blue,blue] (0, 2) circle [radius=0.4];
\draw [fill=blue,blue] (8, 2) circle [radius=0.4];
\draw [fill=blue,blue] (4, 8.5) circle [radius=0.4];

\end{scope}

\begin{scope}[shift={(33,-13)}]

\draw [ultra thick, red] (0,2) -- (4,0);
\draw [ultra thick, red] (4,0) -- (8,2);
\draw [ultra thick, red] (8,2) -- (8,6.5);
\draw [ultra thick, red] (8,6.5) -- (4,8.5);
\draw [ultra thick, red] (0,6.5) -- (0,2);
\draw [ultra thick, red] (0,2) -- (4,4);

\draw [fill=green,green] (4, 0) circle [radius=0.4];
\draw [fill=green,green] (4, 4) circle [radius=0.4];
\draw [fill=green,green] (0, 6.5) circle [radius=0.4];
\draw [fill=green,green] (8, 6.5) circle [radius=0.4];
\draw [fill=blue,blue] (0, 2) circle [radius=0.4];
\draw [fill=blue,blue] (8, 2) circle [radius=0.4];
\draw [fill=blue,blue] (4, 8.5) circle [radius=0.4];

\end{scope}

\end{tikzpicture}
\caption{A plane bipartite graph with a ribbon structure that rotates in opposite directions at its two color classes. The $V$-cut Jaeger trees corresponding to the indicated base point are listed in the associated shelling order.}
\label{fig:knot}
\end{figure}

\begin{ex}
\label{ex:non-triangulation}
Let us consider the plane bipartite graph of Figure \ref{fig:Jaeger-trees} again, with the same base node and base edge as in Example \ref{ex:kektura} and onward, but this time let us use a ribbon structure very similar to that of Figure \ref{fig:apapirrasokatir}. That is, we use counterclockwise rotations about violet nodes and clockwise ones about emerald nodes. (This is the same rule as in Example \ref{ex:apapirrasokatir}, except that the colors traded places.) In Figure \ref{fig:knot} we show again an embedding in $\R^3$ of the corresponding ribbon surface with the base point along its boundary. The seven spanning trees of the Figure are the resulting $V$-cut Jaeger trees, listed in the shelling order of Section \ref{sec:shelling}. One may check that the trees do realize all hypertrees on both color classes, cf.\ Corollary \ref{c:Jaeger_hypertree_uniquely_realized}. On the other hand, they do not form a triangulation of the root polytope. For example, the second and third trees are such that, with regard to the upper right quadrangular region of the embedding, one tree contains one pair of opposite edges and the other tree contains the other pair. That violates Postnikov's compatibility condition \cite[Lemma 12.6]{alex}, i.e., the simplices corresponding to the two trees do not intersect in a common face.

In order to have a closer look, let us use the notation of Figures \ref{fig:Jaeger-trees} and \ref{fig:fundamental_cut} again. Let us also denote the first three trees in the order by $T_1$, $T_2$, $T_3$ and let us refer to the corresponding five-dimensional simplices in $\R^E\oplus\R^V$ as $\sigma_1$, $\sigma_2$, and $\sigma_3$, respectively (i.e., $\sigma_i=Q_{T_i}$). Since the edge $e_3v_0$ appears for the first time in $T_3$, we see that $\sigma_3$ attaches to $\sigma_1\cup\sigma_2$ along its four-dimensional facet $\varphi$ that is opposite the vertex $\mathbf e_3+\mathbf v_0$. On the other hand, $\varphi$ is not a facet either of $\sigma_1$ or of $\sigma_2$. Instead, the following happens. The vectors $\mathbf e_1+\mathbf v_1$, $\mathbf e_1+\mathbf v_2$, $\mathbf e_3+\mathbf v_2$, and $\mathbf e_3+\mathbf v_1$ (corresponding to the edges bounding the shaded quadrangle) span a two-dimensional square. It is bisected by the diagonal from $\mathbf e_1+\mathbf v_2$ to $\mathbf e_3+\mathbf v_1$ so that the two halves belong to $\sigma_1$ and $\sigma_2$, respectively. The part of the square that belongs to $\sigma_3$, and indeed to the facet $\varphi$, is bounded by the \emph{other} diagonal. The three half-squares become facets of $\sigma_1$, $\sigma_2$, and $\sigma_3$ by taking their convex hulls with the common vertices $\mathbf e_0+\mathbf v_1$ and $\mathbf e_2+\mathbf v_0$, which are affine independent from the square.
\end{ex}

The bipartite graph of the previous example had points of degree at least three in both color classes, in other words it was not of the form $\bip G$ for any graph $G$. But that was not the reason for what we observed; rather, the ribbon structure was. In the next example we show that the dissection can fail to be a triangulation even in Bernardi's original family of cases.

\begin{figure}[h]
\begin{tikzpicture}[scale=.25]

\draw [thick] (2,0) -- (6,0);
\draw [thick] (6,0) -- (10,0);
\draw [thick] (10,0) -- (11,3);
\draw [thick] (11,3) -- (12,6);
\draw [thick] (12,6) -- (9,8);
\draw [thick] (9,8) -- (6,10);
\draw [thick] (6,10) -- (3,8);
\draw [thick] (3,8) -- (0,6);
\draw [thick] (0,6) -- (1,3);
\draw [thick] (1,3) -- (2,0);
\draw [thick] (2,0) -- (7,3);
\draw [thick] (7,3) -- (12,6);
\draw [thick] (12,6) -- (6,6);
\draw [thick] (6,6) -- (0,6);
\draw [thick] (0,6) -- (5,3);
\draw [thick] (5,3) -- (10,0);
\draw [thick] (10,0) -- (8,5);
\draw [thick] (8,5) -- (6,10);
\draw [thick] (6,10) -- (4,5);
\draw [thick] (4,5) -- (2,0);
\draw [fill] (2, 0) circle [radius=0.4];
\draw [fill] (10, 0) circle [radius=0.4];
\draw [fill] (12, 6) circle [radius=0.4];
\draw [fill] (6, 10) circle [radius=0.4];
\draw [fill] (0, 6) circle [radius=0.4];
\draw [thick,fill=white] (6, 0) circle [radius=0.4];
\draw [thick,fill=white] (11, 3) circle [radius=0.4];
\draw [thick,fill=white] (9, 8) circle [radius=0.4];
\draw [thick,fill=white] (3, 8) circle [radius=0.4];
\draw [thick,fill=white] (1, 3) circle [radius=0.4];
\draw [thick,fill=white] (7, 3) circle [radius=0.4];
\draw [thick,fill=white] (8, 5) circle [radius=0.4];
\draw [thick,fill=white] (6, 6) circle [radius=0.4];
\draw [thick,fill=white] (4, 5) circle [radius=0.4];
\draw [thick,fill=white] (5, 3) circle [radius=0.4];
\draw [->,thick] (1,0) arc [radius=1, start angle=180, end angle=290];
\draw [<-,thick] (11,0) arc [radius=1, start angle=0, end angle=-110];
\draw [->,thick] (12,7) arc [radius=1, start angle=90, end angle=-20];
\draw [<-,thick] (0,7) arc [radius=1, start angle=90, end angle=200];
\draw [->,thick] (6,11) arc [radius=1, start angle=90, end angle=140];
\draw [thick] (6,11) arc [radius=1, start angle=90, end angle=40];
\draw [fill] (6, 11) circle [radius=0.2];

\begin{scope}[shift={(20,0)}]
\draw [thick] (2,0) -- (6,0);
\draw [thick] (6,0) -- (10,0);
\draw [ultra thick] (10,0) -- (11,3);
\draw [thick] (11,3) -- (12,6);
\draw [thick] (9,8) -- (6,10);
\draw [thick] (3,8) -- (0,6);
\draw [thick] (0,6) -- (1,3);
\draw [ultra thick] (1,3) -- (2,0);
\draw [ultra thick] (7,3) -- (12,6);
\draw [thick] (12,6) -- (6,6);
\draw [ultra thick] (0,6) -- (5,3);
\draw [thick] (10,0) -- (8,5);
\draw [thick] (8,5) -- (6,10);
\draw [thick] (4,5) -- (2,0);
\draw [fill] (2, 0) circle [radius=0.4];
\draw [fill] (10, 0) circle [radius=0.4];
\draw [fill] (12, 6) circle [radius=0.4];
\draw [fill] (6, 10) circle [radius=0.4];
\draw [fill] (0, 6) circle [radius=0.4];
\draw [thick,fill=white] (6, 0) circle [radius=0.4];
\draw [thick,fill=white] (11, 3) circle [radius=0.4];
\draw [thick,fill=white] (9, 8) circle [radius=0.4];
\draw [thick,fill=white] (3, 8) circle [radius=0.4];
\draw [thick,fill=white] (1, 3) circle [radius=0.4];
\draw [thick,fill=white] (7, 3) circle [radius=0.4];
\draw [thick,fill=white] (8, 5) circle [radius=0.4];
\draw [thick,fill=white] (6, 6) circle [radius=0.4];
\draw [thick,fill=white] (4, 5) circle [radius=0.4];
\draw [thick,fill=white] (5, 3) circle [radius=0.4];
\end{scope}

\begin{scope}[shift={(35,0)}]
\draw [thick] (6,0) -- (10,0);
\draw [thick] (10,0) -- (11,3);
\draw [ultra thick] (11,3) -- (12,6);
\draw [thick] (9,8) -- (6,10);
\draw [thick] (3,8) -- (0,6);
\draw [ultra thick] (0,6) -- (1,3);
\draw [thick] (1,3) -- (2,0);
\draw [ultra thick] (2,0) -- (7,3);
\draw [thick] (7,3) -- (12,6);
\draw [thick] (12,6) -- (6,6);
\draw [ultra thick] (5,3) -- (10,0);
\draw [thick] (8,5) -- (6,10);
\draw [thick] (6,10) -- (4,5);
\draw [thick] (4,5) -- (2,0);
\draw [fill] (2, 0) circle [radius=0.4];
\draw [fill] (10, 0) circle [radius=0.4];
\draw [fill] (12, 6) circle [radius=0.4];
\draw [fill] (6, 10) circle [radius=0.4];
\draw [fill] (0, 6) circle [radius=0.4];
\draw [thick,fill=white] (6, 0) circle [radius=0.4];
\draw [thick,fill=white] (11, 3) circle [radius=0.4];
\draw [thick,fill=white] (9, 8) circle [radius=0.4];
\draw [thick,fill=white] (3, 8) circle [radius=0.4];
\draw [thick,fill=white] (1, 3) circle [radius=0.4];
\draw [thick,fill=white] (7, 3) circle [radius=0.4];
\draw [thick,fill=white] (8, 5) circle [radius=0.4];
\draw [thick,fill=white] (6, 6) circle [radius=0.4];
\draw [thick,fill=white] (4, 5) circle [radius=0.4];
\draw [thick,fill=white] (5, 3) circle [radius=0.4];
\end{scope}

\end{tikzpicture}
\caption{A ribbon structure for the complete graph $K_5$ and two Jaeger trees.}
\label{fig:otszog}
\end{figure}

\begin{ex}
\label{ex:otszog}
Let us consider the complete graph $K_5$ and its ribbon structure indicated in Figure \ref{fig:otszog}. That is, we refer to the planar drawing shown and let edges be ordered clockwise around two of the vertices and counterclockwise around the other three. This extends uniquely to the degree two nodes of $\bip K_5$, as discussed in Section \ref{sec:Bernardi-process}. For $K_5$, let the base vertex be the one on top, and let the top left edge be the base edge. Equivalently, for $\bip K_5$, let the base node be the same point and let the base edge be the upper half of the previous.

Now if cutting edges of $\bip K_5$ is allowed near the five vertices of $K_5$, then the two spanning trees in Figure \ref{fig:otszog} are Jaeger trees. The simplices in $Q_{\bip K_5}$ that correspond to these trees do not intersect in a common face: indeed, the thickened edges (four from each tree) form a cycle in $\bip K_5$ that violates the condition in \cite[Lemma 12.6]{alex}.
\end{ex}

Now we turn to showing that certain triangulations found in the literature are special cases of the dissection by Jaeger trees. Let us first consider the `triangulation by non-crossing trees' \cite{GGP}, see also \cite[Example 5.3]{KP_Ehrhart}. This  applies in the case of a complete bipartite graph $K$, say on $(m+1)+(n+1)$ vertices, whose root polytope is the product $Q_K=\Delta_m\times\Delta_n$ of an $m$- and an $n$-dimensional unit simplex. The idea is to draw the vertices of $K$ on two parallel lines in the plane, separated by color, and then consider those spanning trees whose edges do not cross each other in the drawing. (See Figure \ref{f:examples_for_examples} for an example.) The maximal simplices corresponding to these trees form a triangulation of $Q_K$.

\begin{figure}[h]
\begin{center}
	\begin{tikzpicture}[-,>=stealth',auto,scale=1,
    thick]
    \tikzstyle{e}=[circle,fill,draw, color=green]
    \tikzstyle{v}=[circle,fill,draw,color=blue]
    \node[e,label=left:{\small $b_0$}] (1) at (-0.25, 0) {};
    \node[e] (2) at (1, 0) {};
    \node[e] (3) at (2.25, 0) {};
    \node[v] (4) at (-0.5, 2) {};
    \node[v] (5) at (0.5, 2) {};
    \node[v] (6) at (1.5, 2) {};
    \node[v,label=right:{\small $b_1$}] (7) at (2.5, 2) {};
    \path[every node/.style={font=\sffamily\small},dashed]
    (1) edge node [below] {} (4)
    (1) edge node [below] {} (5)
    (1) edge node [below] {} (6)
    (1) edge node [below] {} (7)
    (2) edge node [below] {} (4)
    (2) edge node [below] {} (5)
    (2) edge node [below] {} (6)
    (2) edge node [below] {} (7)
    (3) edge node [above] {} (4)
    (3) edge node [above] {} (5)
    (3) edge node [above] {} (6)
    (3) edge node [above] {} (7);
    \path[every node/.style={font=\sffamily\small},line width=0.7mm]
    (4) edge node {} (1)
    (4) edge node {} (2)
    (5) edge node {} (2)
    (6) edge node {} (2)
    (7) edge node {} (2)
    (7) edge node {} (3);
    \end{tikzpicture}
    \hspace{0.3cm}
    \begin{tikzpicture}[-,>=stealth',auto,scale=0.4,
	thick]
	\tikzstyle{c}=[{circle,draw,fill,font=\sffamily\small}]
	\node[c,color=green] (0) at (0, 0.8) {};
	\node at (1,.4) {\small $b_1$};
	\node[c,color=blue] (1) at (3, 2.3) {};
	\node[c,color=blue,label=left:{\small $b_0$}] (2) at (-3, 2.3) {};
	\node[c,color=green] (3) at (3, 4.7) {};
	\node[c,color=green] (4) at (-3, 4.7) {};
	\node[c,color=blue] (5) at (0, 6.2) {};
	\node[c,color=green] (6) at (0, 3.6) {};
    \node[c,color=red,label=right:{\small $r_0$}] (7) at (5, 4.2) {};
    \node[c,color=red] (8) at (1.5, 4.2) {};
    \node[c,color=red] (9) at (-1.5, 4.2) {};
    \node[c,color=red] (10) at (0, 2.2) {};
    \draw[->,rounded corners=10pt] (7) |- (1, 7.2) -- (0, 7.2)  -|  (9);
    \draw[->] (7) to[bend left=30] (8);
    \draw[->] (8) to[bend left=30] (10);
	\path[every node/.style={font=\sffamily\small},dashed]
	(4) edge node {} (5)
    (1) edge node {} (3)
    (1) edge node {} (6);
	\path[every node/.style={font=\sffamily\small}, line width=0.7mm]
	(0) edge node {} (2)
    (1) edge node {} (0)
    (2) edge node {} (4)
	(5) edge node {} (3)
    (6) edge node {} (2)
	(5) edge node {} (6);
	\end{tikzpicture}
\end{center}
\caption{Left: A non-crossing tree in $K_{3,4}$. Right: A $V$-cut Jaeger tree in a plane bipartite graph and the corresponding arborescence of the dual graph.}
\label{f:examples_for_examples}
\end{figure}

Now let us imagine the two lines as horizontal, with emerald vertices on the lower one and violet vertices on the upper. Let us define a ribbon structure by rotating counterclockwise around each vertex. Our base node is the lower left (emerald) one and the base edge is the one connecting the base node diagonally to the upper right violet node. (I.e., in the sense of Remark \ref{rem:surface}, we place our base point slightly below the lower left vertex.) Then it is easy to see that non-crossing trees are $E$-cut Jaeger trees. The converse is not hard to check either but since we are not aware of an elegant proof, let us just point out that it follows from the fact that all dissections of a root polytope consist of the same number of maximal simplices. The shelling order provided by Theorem \ref{thm:shelling} is the same in this case as the lexicographic order of the corresponding hypertrees on $E$.

Our other class of triangulations applies whenever the connected bipartite graph $G$ comes with an embedding in the plane. (That is, unlike in the previous situation, edges of $G$ are now not allowed to cross.) In that case, the dual graph $G^*$ has a natural orientation by the rule that every edge of $G^*$ is oriented so that the violet endpoint of the corresponding edge of $G$ is on its left. (See Figure \ref{f:examples_for_examples}.) Let us fix a vertex $r_0$ of $G^*$, that is to say, a face $r_0$ of $G$. Then the spanning trees of $G$ dual to spanning arborescences of $G^*$ rooted at $r_0$ form a triangulation of $Q_G$ \cite{KM}. (Here an arborescence is a tree so that all of its edges point away from the root. A spanning tree of $G$ is dual to a spanning arborescence of $G^*$ if it contains precisely the edges corresponding to the non-edges of the arborescence.)

We claim that the trees given above are exactly the $V$-cut Jaeger trees of $G$, where the ribbon structure is induced by the positive orientation of the plane (so that, unlike in Examples \ref{ex:apapirrasokatir} and \ref{ex:non-triangulation}, both colors spin in the \emph{same} direction). To be more specific, let the base node $b_0$ be violet and incident to the face $r_0$, and let the base edge $b_0b_1$ be also incident to $r_0$ so that when we orient it from $b_0$ to $b_1$, then $r_0$ is on the right. 

To justify our claim it is enough to show that the dual $A^*$ of each arborescence $A\subset G^*$ is a $V$-cut Jaeger tree of $G$. Tracing the boundary of a neighborhood of $A$ or of $A^*$ are equivalent. (The common base point should be thought of as an interior point of the arc connecting $b_0$ and $r_0$.) When we do the latter, because $A$ is an arborescence, we first walk along each edge of $A$ in the direction of its orientation and on its left side. But by definition, this means that the corresponding dual edge of $G$ is cut at its violet endpoint.

Conversely, each $V$-cut Jaeger tree is the dual of an arborescence rooted at $r_0$. Indeed, consider the violet tour of a $V$-cut Jaeger tree $T$. Notice that until cutting the first edge, we move along the boundary of the face $r_0$, with $r_0$ on the right. When we first cut an edge $ve$, we start to see a new face $r$ on the right, and $r_0r$ is an oriented edge of $G^*$ which `starts' the dual arborescence. Continuing this argument inductively, we see that cutting an edge at its violet endpoint corresponds to inserting a new edge pointing away from $r_0$ in the dual graph.

\appendix

\section{The Bernardi bijection is activity-preserving for graphs} \label{sec:act_preserv_for_graphs}

Let us now take a look at the case of ordinary graphs $G=(V,E)$, loopless but possibly with multiple edges, i.e., hypergraphs so that $d(e)=2$ for each $e\in E$. We claim that in this case, the Bernardi process (that is, Corollary  \ref{c:Jaeger_hypertree_uniquely_realized}) gives an internal-activity-preserving bijection between the hypertrees on $E$ and on $V$. The precise statement is as follows. Recall that ribbon structures on $G$ and on $\bip G$ are equivalent. We use the latter sense; in particular, the base node $b_0$ may be an element of $E$ or an element of $V$.

\begin{thm}\label{thm:Bernardi_activity_preserving_graph}
For a ribbon graph $G=(V,E)$, and a $V$-cut Jaeger tree $T$ of $\bip G$, the internal embedding inactivity of $\mathbf f_E(T)$ with respect to the violet $T$-order on $E$ equals the internal embedding inactivity of $\mathbf f_V(T)$ with respect to the violet $T$-order on $V$. 
\end{thm}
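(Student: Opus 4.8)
The plan is to prove that both statistics equal one and the same quantity, namely the number of internally semi-passive edges of $T$ with respect to the \emph{violet} $T$-order of the edges of $\bip G$; the theorem then follows at once.

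One side is essentially already available. Since $T$ is a $V$-cut Jaeger tree it possesses a violet tour, so the statement of Corollary~\ref{cor:passziv_szemipassziv}, read with the two colors interchanged (legitimate because passing from $\HH$ to $\overline\HH$ swaps $E$ and $V$ while fixing $\bip\HH$, and $T$ realizes a hypertree on each color class), says precisely that the internal embedding inactivity of $\mathbf f_V(T)$ with respect to the violet $T$-order on $V$ equals the number of internally semi-passive edges of $T$ with respect to the violet $T$-order of the edges of $\bip G$. So it remains to match the emerald side to this same count.

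For that one uses the graph structure. Because every $e\in E$ has $d(e)=2$, each emerald node of $\bip G$ has degree $2$; hence $\mathbf f_E(T)$ takes only the values $0,1$ and is the characteristic vector of a spanning tree $S=\{\,e\in E\mid d_T(e)=2\,\}$ of $G$, and $T$ itself is obtained from the subdivision of $S$ (each edge of $S$ split into two halves by its emerald midpoint) by attaching, for every $e\in E\setminus S$, a pendant half-edge of $\bip G$ at exactly one endpoint of $e$. In this picture a transfer of valence between two emerald nodes is exactly the exchange of a tree edge for a non-tree edge of $G$, so $e\in E$ is internally inactive for $\mathbf f_E(T)$ with respect to the violet $T$-order on $E$ if and only if $e\in S$ and $e$ is not the smallest element of the fundamental cut $C^*(S,e)$ (computed in $G$) with respect to the order that the violet $T$-order of the edges of $\bip G$ induces on $E$. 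I would then set up the map $e\mapsto\varepsilon_e$ from this set of emerald nodes to the set of internally semi-passive edges of $T$, where $\varepsilon_e$ is the half of $e$ that does not lie on the unique path in $T$ from the base node to $e$ (equivalently, the half whose deletion leaves the node $e$ in the base component of $T$). The easy inclusion is that every internally semi-passive edge is of this form: by the color-reversed version of condition~\ref{otodik} of Lemma~\ref{l:activities_description} a semi-passive edge $\varepsilon=ve$ must have its emerald end $e$ in the base component of $T-\varepsilon$, which rules out the pendant half-edges of the edges of $E\setminus S$ (whose emerald end becomes isolated) and the ``inner'' half of each edge of $S$. The content-bearing direction --- that $\varepsilon_e$ is semi-passive if and only if $e$ fails to be minimal in $C^*(S,e)$ --- I would get by identifying $C^*(T,\varepsilon_e)$ with $\{\varepsilon_e\}$ together with one half of each edge of $E\setminus S$ crossing the graph cut $C^*(S,e)$, and translating the membership clause of condition~\ref{otodik} of Lemma~\ref{l:activities_description} through the $V$-cut rule: the half of a non-tree edge that belongs to $T$ is the one opposite the endpoint at which the tour first meets that edge.

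The main obstacle is exactly this translation and its bookkeeping. The general tools (Corollaries~\ref{cor:passziv_csucshoz_passziv_el} and~\ref{cor:passziv_szemipassziv}) relate $\mathbf f_E(T)$ to semi-passive edges only through the \emph{emerald} $T$-order, whereas here the theorem insists on the \emph{violet} $T$-order on $E$; bridging this discrepancy is where the hypothesis $d(e)=2$ is genuinely used, via the subdivision-with-pendants description of $T$ and the resulting control of cuts of $\bip G$ in terms of cuts of $G$. One case needs a brief separate remark: when the base node $b_0$ is an emerald node lying in $S$, both halves of $b_0$ leave $b_0$ in the base component, but the $V$-cut condition forces every edge of $E\setminus S$ crossing $C^*(S,b_0)$ to have its $T$-half on the side of $b_0$ away from the base edge, so still exactly one half of $b_0$ can be semi-passive and the bijection survives.
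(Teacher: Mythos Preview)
Your plan is sound and follows essentially the same route as the paper: both arguments show that each of the two inactivity counts equals the number of internally semi-passive edges of $T$ with respect to the violet $T$-order, the $V$-side via the color-swapped Corollary~\ref{cor:passziv_csucshoz_passziv_el}/\ref{cor:passziv_szemipassziv}, and the $E$-side by a direct bijection exploiting $d(e)=2$ (this is exactly the content of the refined Theorem~\ref{thm:Bernardi_activity_preserving_graph_refined}). The paper stays inside $\bip G$ and argues with transfers of valence and Lemma~\ref{l:char_Jaeger_cuts}, whereas you organize the $E$-side through the spanning tree $S$ of $G$ and its fundamental cuts $C^*(S,e)$; once one checks that $C^*(T,\varepsilon_e)=\{\varepsilon_e\}\cup\{\text{non-}T\text{-half of }e'':e''\in C^*(S,e)\setminus\{e\}\}$ and uses Lemma~\ref{l:parents_in_T-order} to see that the non-$T$-half of $e''$ has its violet end in the base component of $T-\varepsilon_e$ exactly when $e''$ precedes $e$ in the violet $T$-order on $E$, your equivalence goes through (including the $b_0\in E$ case, which the paper handles by the same appeal to condition~\ref{otodik}).
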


Note that here $\mathbf f_E(T)$ is just the characteristic function of the unique spanning tree $T'$ of $G$ that can be built from those half-edges of $G$ that occur as edges in $T$. The hypertree $\mathbf f_V(T)$ on the other hand is derived less directly from $T'$: this time we also need the ribbon structure to decide which half-edges of the non-edges of $T'$ should be added to $T'$ in order to create $T$. 

We also have the following refinement. See Figure \ref{fig:parositas} for an illustration.

\begin{thm}\label{thm:Bernardi_activity_preserving_graph_refined}
For a ribbon graph $G=(E,V)$ and a $V$-cut Jaeger tree $T$ of $\bip G$, the internally semi-passive edges with respect to the violet $T$-order of the edges of $\bip G$ match exactly the internally inactive elements of $V$ for $\mathbf f_V(T)$ with respect to the violet $T$-order on $V$, and the internally inactive elements of $E$ for $\mathbf f_E(T)$ with respect to the violet $T$-order on $E$. 
\end{thm}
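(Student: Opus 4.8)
The plan is to realize the stated matching as a pair of bijections out of the single set $S$ of edges of $T$ that are internally semi-passive with respect to the violet $T$-order of the edges of $\bip G$. One bijection, $\varepsilon=ve\mapsto v$, will identify $S$ with the internally inactive elements of $V$ for $\mathbf f_V(T)$; the other, $\varepsilon=ve\mapsto e$, will identify $S$ with the internally inactive elements of $E$ for $\mathbf f_E(T)$. Composing one with the inverse of the other then gives the matching between the two sets of inactive nodes, refining Theorem \ref{thm:Bernardi_activity_preserving_graph}, with each inactive node identified with its common incident semi-passive edge.

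For the violet side I would first establish, mirroring the proof of Lemma \ref{l:char_Jaeger_cuts}, that within $C^*(T,\varepsilon)$ the edges whose violet endpoint lies in the base component of $T-\varepsilon$ all precede, in the violet $T$-order, those whose emerald endpoint lies there. This gives the order-free reformulation (the color-dual of part \ref{otodik} of Lemma \ref{l:activities_description}): $\varepsilon=ve\in T$ belongs to $S$ iff $e$ is in the base component of $T-\varepsilon$ and $C^*(T,\varepsilon)$ contains some edge whose violet endpoint is also in the base component. I would then re-run the transfer-of-valence argument from the proof of Lemma \ref{l:activities_description} (the chain among parts \ref{masodik}, \ref{harmadik} and \ref{otodik}), but tracking the violet class: replacing $ve$ in $T$ by the $<_{T,V}$-smallest edge $v'e'$ of $C^*(T,\varepsilon)$ is a transfer of valence from $v$ to $v'$ for $\mathbf f_V(T)$, while part \ref{kettoketto} of Lemma \ref{l:char_Jaeger_cuts} together with Lemma \ref{l:parents_in_T-order} shows $v'e'$ is current at $v'$ before $v$ is first reached, so $v'$ precedes $v$ in the violet $T$-order on $V$. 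Hence $\varepsilon=ve\in S$ iff $e$ is in the base component of $T-\varepsilon$ and $v$ is internally inactive for $\mathbf f_V(T)$ with respect to the violet $T$-order on $V$; and the analogue of Corollary \ref{cor:passziv_csucshoz_passziv_el} shows each internally inactive $v$ has a unique incident edge of $S$, namely the first edge of the $T$-path from $v$ to $b_0$. This part uses only Lemmas \ref{l:char_Jaeger_cuts} and \ref{l:parents_in_T-order}, not that $\HH$ is a graph.

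For the emerald side I would bring in the hypothesis $d(e)=2$ for all $e\in E$. Then $\mathbf f_E(T)$ is the characteristic vector of the spanning tree $T'$ of $G$ formed by the half-edges of $G$ occurring in $T$, and, via Remark \ref{f:orders} and the coincidence of the (ht:$E$, cut:$E$) and (ht:$E$, cut:$V$) orders on $E$ for graphs (the fact already invoked for Conjecture \ref{conj:interior_cutatvertex}), the violet $T$-order on $E$ is Bernardi's edge-order $<_{T'}$ of $G$ from subsection \ref{subsec:Bernardi_for_graphs}. Each $e\in E$ has exactly two incident edges $v_1e,v_2e$ of $\bip G$. If $e\notin T'$ then $\mathbf f_E(T)(e)=0$ is minimal, so $e$ is internally active, and indeed, arguing as on the violet side, the unique tree-edge at $e$ is not in $S$ because $e$ lies in the non-base component after its removal. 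If $e\in T'$ then exactly one of $v_1e,v_2e$ — the one not lying on the $T$-path from $e$ to $b_0$, say $v_2e$ — has $e$ in the base component of its removal, so at most one incident edge of $S$ occurs at $e$, which gives injectivity of $\varepsilon\mapsto e$ on $S$. It remains to translate base cuts: a non-tree edge $f$ of $G$ with $f\in C^*(T',e)$ contributes an edge of $C^*(T,v_2e)$ whose violet endpoint is in the base component exactly when the half-edge of $f$ that lies in $T$ is on the side of the $G$-cut $C^*(T',e)$ away from the base, and by Lemma \ref{l:char_Jaeger_cuts} (applied to $T$ as an $E$-cut Jaeger tree for the reversed ribbon structure, or by inspecting the violet tour directly) some such $f$ exists precisely when $e$ is not the $<_{T'}$-minimal element of $C^*(T',e)$, i.e.\ when $e$ is internally inactive for $\mathbf f_E(T)$. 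This makes $\varepsilon\mapsto e$ the second bijection.

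The step I expect to be the main obstacle is exactly this last translation: passing between the base cut $C^*(T,\cdot)$ in $\bip G$ and the base cut $C^*(T',\cdot)$ in $G$ while keeping track, for each non-tree edge of $G$, of which of its two half-edges lies in $T$ and on which side of the $G$-cut it sits, and then matching ``has a violet-in-base partner edge in $\bip G$'' with ``is not $<_{T'}$-minimal in $G$''. A secondary, bookkeeping-heavy point is carefully re-deriving the violet-order analogues of Lemma \ref{l:activities_description} and Corollary \ref{cor:passziv_csucshoz_passziv_el}, where the ribbon structure enters through the emerald tour and the two tours of a Jaeger tree must be kept strictly separate.
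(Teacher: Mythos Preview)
Your plan is sound; the violet half is exactly the paper's argument (colour-swapped Lemma~\ref{l:activities_description} and Corollary~\ref{cor:passziv_csucshoz_passziv_el}). On the emerald side you diverge: you translate into $G$, identify the violet $T$-order on $E$ with $<_{T'}$, and reduce to the equivalence ``$e$ is not $<_{T'}$-minimal in $C^*(T',e)$'' $\Leftrightarrow$ ``some $f$ in that cut has its $T$-half-edge on the non-base side''. This equivalence is correct and your flagged obstacle is surmountable with Lemma~\ref{l:parents_in_T-order} applied to $v_2e\in T$: if the $T$-half of $f$ is on the far side then the emerald node $f$ lies in the non-base component $T_1$, so $(f,w_2f)$ falls in the $T_1$-interval of the tour, and the $V$-cut property forces $(w_2,w_2f)$ before that interval, hence before $(v_1,v_1e)$; the converse is similar. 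The paper instead stays entirely inside $\bip G$ and never introduces $C^*(T',e)$ or $<_{T'}$: given $e$ inactive with witness $e'<e$, the constraint $\mathbf f_E(T)(e')=0$ singles out the unique non-tree half-edge $v'e'$, and since any transfer from $e$ to $e'$ must go through $v'e'$, this edge lies in $C^*(T,\varepsilon)$ for both $\varepsilon$ incident to $e$; the $V$-cut property then places $v'$ on the base side, giving the colour-swapped condition~\ref{otodik} directly. The paper's route is shorter and makes visible that the hypothesis $d(e)=2$ is used only to isolate $v'e'$ at the witness $e'$ and to pinpoint when $e$ receives its number in the tour---which also explains why the statement fails for general hypergraphs.
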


To a degree, this statement justifies our preference for inactive objects over active ones: if we pass to the complementer sets of nodes and of edges, respectively, the perfect matching does not hold any more.

\begin{figure}
\begin{center}
	\begin{tikzpicture}[-,>=stealth',auto,scale=0.5,
    thick]
    \tikzstyle{o}=[circle,fill,draw]
    \tikzstyle{s}=[circle,draw,minimum size=0.6cm]
    \node[o,color=green] (1) at (6, -1) {};
    \node[o,color=green] (2) at (2, -1) {};
    \node[o,color=green] (3) at (2, 1) {};
    \node[s] (11) at (2, 1) {};
    \node[o,color=green] (4) at (6, 1) {};
    \node[o,color=green] (5) at (4, 0) {};
    \node[s] (12) at (4, 0) {};
    \node[o,color=blue] (6) at (8, 0) {};
    \node[o,color=blue] (7) at (4, -2) {};
    \node[s] (13) at (4, -2) {};
    \node[o,color=blue] (8) at (0, 0) {};
    \node[o,color=blue] (9) at (4, 2) {};
    \node[s] (10) at (4, 2) {};
    \path[every node/.style={font=\sffamily\small}, line width=0.8mm]
    (1) edge node [below right] {$6$} (6)
    (7) edge node [below left] {$3$} (2)
    (3) edge node [above left] {$1$} (8)
    (3) edge [color=red] node {$9$} (9)
    (4) edge node [above right] {$8$} (9)
    (5) edge node {$2$} (9)
    (1) edge node {$4$} (7)
    (5) edge [color=red] node {$7$} (7);
    \path[every node/.style={font=\sffamily\small},dashed]
    (4) edge node {$5$} (6)
    (2) edge node {$0$} (8);
    \end{tikzpicture}
\end{center}
\caption{The ribbon structure for the graph shown is the positive orientation of the plane at each node. The base point is to the left of the leftmost violet node. The thick edges form the $V$-cut Jaeger tree $T$. The numbers on the edges show the violet $T$-order. The circled nodes are the internally inactive elements with respect to the violet $T$-order on $E$ and on $V$. The red edges are the internally semi-passive edges in $T$ with respect to the violet $T$-order.}
\label{fig:parositas}
\end{figure}

\begin{proof}
Let $T$ be a $V$-cut Jaeger tree and recall (Lemma \ref{l:emerald_tree_is_also_violet}) that $T$ is an $E$-cut Jaeger tree with respect to the reversed ribbon structure. Throughout the proof we use Lemma \ref{l:activities_description} and its corollaries with interchanging the roles of the colors. 

By Corollary \ref{cor:passziv_csucshoz_passziv_el}, a violet node $v$ is internally  inactive for $\mathbf f_V(T)$ with respect to the violet $T$-order on $V$ if and only if there is an edge of $\bip G$ incident to it which is internally semi-passive with respect to the violet $T$-order, moreover, in this case there is exactly one such edge and it has its emerald endpoint (not $v$) in its base component with respect to $T$. Let us denote the collection of these edges by $S$.

What we need to show is that the emerald endpoints of the edges in $S$ are all different, all internally inactive for $\mathbf f_E(T)$ with respect to the violet $T$-order, and that every other emerald node is internally active.

The first point is easy to check: If $e\neq b_0$, then $T$ has at most one edge incident to $e$ that has its emerald endpoint in the base component (defined by the edge), since for any $e\in E$ there are at most two incident edges in $T$, and if $e$ is not the base node, then one of them starts the path from $e$ to $b_0$, making it have its violet endpoint in its base component. If $e=b_0$, then either the base edge $b_0b_1$ is not in $T$, or by Lemma \ref{l:char_Jaeger_cuts}, every edge of the fundamental cut $C^*(T,b_0b_1)$ has its emerald endpoint in the base component of $T-b_0b_1$; hence the base edge cannot be internally semi-passive by part \ref{otodik} of Lemma \ref{l:activities_description}. Thus, in either case there cannot be two internally semi-passive edges incident to $b_0$.

Now it suffices to show that an emerald node $e\in E$ is internally inactive for $\mathbf f_E(T)$ with respect to the violet $T$-order on $E$ if and only if there is an edge of $T$ incident to $e$ that is internally semi-passive with respect to the violet $T$-order.   

If $e$ is internally inactive then $\mathbf f_E(T)(e)>0$, i.e., both edges of $\bip G$ incident to $e$ are in $T$. Moreover, there is an emerald node $e'\in E$ that comes before $e$ in the violet $T$-order, and $\mathbf f_E(T)$ is such that a transfer of valence is possible from $e$ to $e'$. Hence necessarily $\mathbf f_E(T)(e')=0$, that is, $T$ contains only one edge incident to $e'$. Let $v'e'$ be the other edge, the one not in $T$. As the hypertrees on $E$ in $\bip G$ determine their realizing spanning trees at nodes $x\in E$ with $\mathbf f(x)=1$, the transfer of valence from $e$ to $e'$ can be achieved by removing an edge incident to $e$ from $T$, and adding $v'e'$. (Note that the new tree does not have to be Jaeger.) Therefore one of the edges incident to $e$ is in the fundamental cycle $C(T,v'e')$ -- and thus both are. In other words, $v'e'$ is in the fundamental cut of $T$ with respect to both edges incident to $e$ in $\bip G$.

If $e=b_0$, then we claim that $b_0b_1^-$ is internally semi-passive with respect to the violet $T$-order. Indeed, $b_0b_1^-$ is the first edge in the emerald tour of $T$, and since $v'e'$ is also in its fundamental cut, this ensures that $b_0b_1^-$ is not the largest element of $C^*(T,b_0b_1^-)$ with respect to the emerald order of the edges of $\bip\HH$. Hence our claim follows by the \ref{negyedik} $\Rightarrow$ \ref{masodik} implication of Lemma \ref{l:activities_description}.

When $e\ne b_0$, note that as $T$ is a $V$-cut Jaeger tree, $v'e'$ is smaller in the violet $T$-order than the other edge of $\bip G$ incident to $e'$. Furthermore, $v'e'$ is cut at $v'$ and this happens before either edge of $\bip G$ incident to $e$ becomes current with its violet endpoint in the violet tour of $T$. As $e\neq b_0$, the first of the four traversals of the edges incident to $e$ is from the direction of the violet endpoint. Hence $v'e'$ is cut before any edge incident to $e$ gets traversed, in particular $v'e'$ has its violet endpoint $v'$ in the base component of the fundamental cut of $T$ with respect to either edge incident to $e$. One of these edges has its emerald endpoint in its base component. Now by the \ref{otodik} $\Rightarrow$ \ref{masodik} implication in Lemma \ref{l:activities_description} (and Lemma \ref{l:emerald_tree_is_also_violet}), this edge is internally semi-passive with respect to the violet $T$-order. 

Conversely, if an edge $\varepsilon=ve$ is internally semi-passive in $T$ with respect to the violet $T$-order, then $e$ is internally inactive for $\mathbf f_E(T)$ with respect to the violet $T$-order on $E$ for the following reason.

We may use the \ref{masodik} $\Rightarrow$ \ref{otodik} implication of Lemma \ref{l:activities_description} to find an edge $\varepsilon'=e'v' \in C^*(T,\varepsilon)$ that has its violet endpoint in the base component of $T-\varepsilon$ (which is the component containing $e$). Hence by Lemma \ref{l:char_Jaeger_cuts}, $\varepsilon'$ is cut in the violet tour of $T$ before the traversal of $\varepsilon$ from the direction of its emerald endpoint. Since there are only two edges in $\bip G$ incident to $e$, the node $e$ `receives its number' in the violet $T$-order immediately before the traversal of $\varepsilon$ from the direction of its emerald endpoint. Thus, $e'$ precedes $e$ in the violet $T$-order on $E$. As $\varepsilon'\in C^*(T,\varepsilon)$, we may realize a transfer of valence from $e$ to $e'$ by replacing $\varepsilon$ in $T$ with $\varepsilon'$. This completes the proof of the theorem.
\end{proof}

We note that Theorem \ref{thm:Bernardi_activity_preserving_graph} is not true for all hypergraphs, as in general the internally semi-passive edges of a Jaeger tree do not necessarily form a matching.

\bibliographystyle{plain}

\bibliography{Bernardi}

\end{document}